\documentclass[a4paper, 11pt]{article}


\usepackage{authblk}

\usepackage{amsmath,amsthm,amssymb,enumerate}
\usepackage[margin=2cm]{geometry}
\usepackage[hidelinks]{hyperref}
\usepackage[capitalise]{cleveref}
\usepackage{tikz}
\usetikzlibrary{calc,intersections,through,backgrounds,arrows,patterns}
\usepackage{subcaption}

\tikzstyle{grid_point}=[circle,draw=black!50,text=white,inner sep=0.5mm,font=\scriptsize]
\tikzstyle{grid_white_point}=[circle,draw=black!50,fill=white,text=white,inner sep=0.5mm,font=\scriptsize]
\tikzstyle{true_point}=[circle,draw=black!60,fill=black!60,text=white,inner sep=0.5mm,font=\scriptsize]
\tikzstyle{false_point}=[circle,draw=black!70,text=white,inner sep=0.5mm,font=\scriptsize]
\tikzstyle{intersection_point}=[circle,draw=black!90,fill=black!90,text=white,inner sep=0.4mm,font=\scriptsize]

\tikzstyle{oriented_segment} = [draw,line width=1pt,arrows=-latex',black!80]
\tikzstyle{dashed_line} = [draw,dashed,line width=0.4pt,black!60]

\makeatletter
\tikzset{
        hatch distance/.store in=\hatchdistance,
        hatch distance=5pt,
        hatch thickness/.store in=\hatchthickness,
        hatch thickness=5pt
        }
\pgfdeclarepatternformonly[\hatchdistance,\hatchthickness]{north east hatch}
    {\pgfqpoint{-1pt}{-1pt}}
    {\pgfqpoint{\hatchdistance}{\hatchdistance}}
    {\pgfpoint{\hatchdistance-1pt}{\hatchdistance-1pt}}%
    {
        \pgfsetcolor{\tikz@pattern@color}
        \pgfsetlinewidth{\hatchthickness}
        \pgfpathmoveto{\pgfqpoint{0pt}{0pt}}
        \pgfpathlineto{\pgfqpoint{\hatchdistance}{\hatchdistance}}
        \pgfusepath{stroke}
    }
\pgfdeclarepatternformonly[\hatchdistance,\hatchthickness]{north west hatch}
    {\pgfqpoint{-1pt}{-1pt}}
    {\pgfqpoint{\hatchdistance}{\hatchdistance}}
    {\pgfpoint{\hatchdistance-1pt}{\hatchdistance-1pt}}%
    {
        \pgfsetcolor{\tikz@pattern@color}
        \pgfsetlinewidth{\hatchthickness}
        \pgfpathmoveto{\pgfqpoint{0pt}{\hatchdistance}}
        \pgfpathlineto{\pgfqpoint{\hatchdistance}{0pt}}
        \pgfusepath{stroke}
    }
\makeatother

\tikzstyle{small_text} = [font=\scriptsize]

\theoremstyle{plain}
\newtheorem{theorem}{Theorem}

\newtheorem{corollary}[theorem]{Corollary}

\newtheorem{lemma}[theorem]{Lemma}

\newtheorem{claim}[theorem]{Claim}
\theoremstyle{definition}
\newtheorem{definition}{Definition}[section]

\newcommand{\Conv}{\textup{Conv}}
\newcommand{\Vertic}{\textup{Vert}}
\newcommand{\dir}{\overrightarrow}
\newcommand{\Area}{\textup{Area}}
\newcommand{\superb}{proper }

\newcommand{\rMN}{\mathcal{R}_{u,v}}

\newcommand{\gridMN}{\mathcal{G}_{m,n}}

\newcommand{\grid}{\mathcal{G}}
\newcommand{\m}{u}
\newcommand{\n}{v}
\newcommand{\tr}[3]{{#1}{#2}{#3}}
\newcommand{\ortr}[3]{\overrightarrow{{#1}{#2}{#3}}}

\title{Asymptotics of the number of 2-threshold functions}
\author[1]{Elena Zamaraeva}
\author[2]{Jovi{\v{s}}a {\v{Z}}uni{\'c}}
\affil[1]{Mathematical Center, Lobachevsky State University of Nizhni Novgorod, Russia}
\affil[2]{Mathematical Institute, Serbian Academy of Sciences, Serbia}

\definecolor{light_grey}{rgb}{0.85,0.85,0.85}
\definecolor{light_light_grey}{rgb}{0.95,0.95,0.95}

\begin{document}

\maketitle

\begin{abstract}

A $k$-threshold function on a rectangular grid of size $m \times n$ is the conjunction of $k$ threshold functions on the same domain.
In this paper, we focus on the case $k=2$ and show that the number of two-dimensional 2-threshold functions is~$\dfrac{25}{12\pi^4} m^4 n^4 + o(m^4n^4)$.
\end{abstract}

\textbf{Keywords:}
  threshold function, $k$-threshold function, intersection of halfplanes, integer lattice, rectangular grid, asymptotic formula

\tableofcontents
\section{Introduction}
Let $\gridMN$ denote an integer two-dimensional rectangular grid, that is, $\gridMN = \{0,\dots,m-1\} \times \{0,\dots,n-1\}$.
For a $\{0,1\}$-valued function $f$ defined on $\gridMN$ we denote 
$$
M_\nu(f) = \{ x \in \gridMN | f(x) = \nu\},
$$
where $\nu \in \{0,1\}$.
For a given set of points $S$ we denote by $\Conv(S)$ the convex hull of $S$.

We say that a $\{0,1\}$-valued function $f$ defined on the grid $\gridMN$ is \textit{threshold} if its sets of true and false points are separable by a line, i.e. 
$$
\Conv(M_0(f)) \cap \Conv(M_1(f)) = \emptyset.
$$
Let $k$ be a natural number, a function $f :\gridMN \rightarrow \{0,1\}$ is called \textit{$k$-threshold} if there exist at most $k$ threshold functions $f_1, \dots, f_k$ such that
$$
M_1(f) = \bigcap_{i=1}^{k} M_1(f_i).
$$
We say that $f$ is a \emph{conjunction} of $f_1, \dots, f_k$, i.e. $f = f_1 \land \dots \land f_k$.
We also say that the functions $f_1,\dots,f_k$ \emph{define} the $k$-threshold function $f$.
A $k$-threshold function is called \emph{proper $k$-threshold} if it is not $(k-1)$-threshold.

In this work we focus on $2$-threshold functions, i.e. the conjunctions of two threshold functions.
Some previous works on this topic dealt with learning issues (\cite{Hegedus1997,Zamaraeva2016, Zamaraeva2017}) and the structure (\cite{Part1}) of $2$-threshold functions whereas this paper is devoted to the number of $2$-threshold functions.
Denote by $t_k(m,n)$ the number of $k$-threshold functions on $\gridMN$.
Throughout the paper we will write $t(m,n)$ instead of $t_1(m,n)$, as the former is a common notation in the literature.
The main result of the paper is the following

\begin{theorem}
	\label{th:t_k}
	$t_2(m,n) = \frac{25}{12\pi^4} m^4 n^4 + o(m^4n^4)$.
\end{theorem}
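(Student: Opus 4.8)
The plan is to count only the \emph{proper} $2$-threshold functions, since the remaining $2$-threshold functions are $1$-threshold and there are only $t(m,n)=O(m^2n^2)=o(m^4n^4)$ of those. For a proper $2$-threshold function $f=f_1\wedge f_2$ the true set $M_1(f)=M_1(f_1)\cap M_1(f_2)$ is precisely $\gridMN$ intersected with a convex wedge $H_1\cap H_2$, where $H_i$ is the halfplane of $f_i$ and $\ell_i=\partial H_i$; the two lines meet at an apex and bound $\Conv(M_1(f))$ along two ``binding'' hull edges, one realizing each cut. The first step is to fix a \emph{canonical} defining pair by tightening each $\ell_i$ to the line through the corresponding binding edge, so that each canonical line has a primitive direction and passes through grid points. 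Using the structural description of $2$-threshold functions from \cite{Part1}, I would show that outside a set of $o(m^4n^4)$ exceptional functions this canonical pair $\{\ell_1,\ell_2\}$ is recovered uniquely from $f$, and that the two lines vary \emph{independently}, each over essentially as many choices as there are $1$-threshold functions. Thus counting proper $2$-threshold functions reduces, up to the error term, to counting admissible ordered pairs $(\ell_1,\ell_2)$ and dividing by $2$ for the symmetry of the conjunction.

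Second, I would describe the configuration space explicitly. Each canonical line is encoded by a primitive direction vector together with an offset selecting which lattice line of that direction it is; as in the asymptotics $t(m,n)\sim\frac{6}{\pi^2}m^2n^2$, the direction contributes the density $\frac{1}{\zeta(2)}=\frac{6}{\pi^2}$ of primitive integer vectors and the offset contributes a length proportional to the grid extent. A pair is \emph{admissible} when the two directions span a genuine (non-degenerate) wedge of the correct orientation, both cuts are non-redundant so that the conjunction is proper, and the local configuration at the corner is the one that makes the pair self-canonical. I would encode these requirements as explicit inequalities on the two directions and the two offsets, so that $t_2(m,n)$ is written, up to $o(m^4n^4)$, as a fourfold sum over the two directions and two offsets of an admissibility indicator.

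Third, I would evaluate this sum asymptotically. Replacing the direction sums by integrals weighted by the primitive-vector density and the offset sums by the corresponding in-grid lengths, the count takes the form $\frac{36}{\pi^4}\,m^4n^4$ times a purely rational geometric constant $C$ — the normalized measure of admissible configurations, with the factor $\tfrac12$ from unordered pairing folded in. The target is to show that this integral evaluates to $C=\frac{25}{432}$, whence $\frac{36}{\pi^4}\cdot\frac{25}{432}=\frac{25}{12\pi^4}$, as required. Parallel or nearly-parallel direction pairs (which would give strip-like cuts) and configurations with an empty cap form lower-dimensional strata and contribute only $o(m^4n^4)$.

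The hard part is twofold. The first and main obstacle is the exact control of overcounting: a given proper $2$-threshold function is a conjunction of many pairs of threshold functions, and the naive bound $t(m,n)^2=\frac{36}{\pi^4}m^4n^4$ overcounts by a large factor, so everything hinges on proving that the canonical pair is genuinely unique and that the non-canonical, degenerate, and boundary configurations are negligible — this is exactly where the structure theory of \cite{Part1} and delicate lattice-point error estimates enter. The second obstacle is the evaluation of the constrained integral to the precise constant $C=\frac{25}{432}$: because the properness and self-canonical conditions at the corner couple the two directions with the two offsets, the integral does not factor naively, and it must be reorganized so that the direction and position integrations disentangle in the limit while the rational constant is read off exactly.
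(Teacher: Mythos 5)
Your high-level plan (pass to a canonical defining pair, show uniqueness outside a negligible exceptional set, then count lattice configurations with a density argument) is the same strategy the paper follows, but two of your choices create genuine gaps. First, your configuration space is wrong: you parametrize each canonical cut by a primitive direction plus an offset, i.e.\ by a lattice \emph{line}, whereas the faithful canonical objects are proper pairs of oriented \emph{prime segments} (\cref{th:proper_exists}, \cref{th:proper_unique}). Distinct proper pairs supported on the same pair of lines define distinct proper $2$-threshold functions, because sliding a prime segment along its line changes which lattice points \emph{of that line} are true (condition 2 of \cref{def:segment_defines_f}), and \cref{th:proper_unique} then forces the functions to differ. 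Consequently a fourfold sum over two directions and two offsets of a $\{0,1\}$-valued admissibility indicator cannot recover $t_2(m,n)$ up to $o(m^4n^4)$: each admissible line pair must instead be weighted by the number of admissible positions of the two prime segments along the two rays emanating from the apex, which reintroduces two further summation variables and changes the constant. The paper avoids this by counting unordered pairs of prime segments in convex position directly (\cref{prop:superbToProperPairs}, \cref{cor:t_2-p}).

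Second, the decisive quantitative step is absent. You state as a ``target'' that the normalized measure of admissible configurations equals $C=\tfrac{25}{432}$, but this constant is precisely the content of the theorem; asserting it is not a proof. In the paper this is where essentially all the work lies: the sum over circumscribed rectangles $\rMN$ (\cref{eq:total_1}), the case analysis on how many of the four endpoints are corners of $\rMN$ ($Z_0,\dots,Z_4$), the M\"obius-inversion count of prime segments inside a triangle (\cref{lem:S_triangle}), and the Euler-function summations in \cref{lem:Z2b,lem:Z2c,lem:Z1a,lem:Z1b,lem:Z0}, which produce the constants $\tfrac1{24\pi^4}+\tfrac{31}{24\pi^4}+\tfrac34\cdot\tfrac1{\pi^4}=\tfrac{25}{12\pi^4}$. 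You correctly identify both obstacles (uniqueness of the canonical pair, and the evaluation of the constant) as ``the hard part,'' but neither is resolved, and the first is compounded by the line-versus-segment mismatch above. The uniqueness reduction itself is salvageable as sketched, since it is exactly what \cref{cl:distance-1}, \cref{claim:non-proper-functions} and \cref{th:t_2=p} supply, but the counting stage would need to be rebuilt on pairs of prime segments before any integral evaluation could be attempted.
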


The study goes back to Gauss \cite{gauss} and study of the number of lattice points inside a circle, but someone may refer to earlier times. 
Configurations of planar lattice points inside a circle are also considered in \cite{huxley-focm,huxley-lms} and configurations of lattice points inside a sphere are considered in \cite{zunic-aam}. 
Configurations produced by straight lines are also intensively studied  \cite{1,Koplowitz,Haukkanen,Zolotykh,citation-2,6}.
This is partially because of their applications in digital geometry and computer graphics. 
Problems related to the planar lattice points configurations produced by convex curves and arbitrary curves are considered in \cite{ivic}
and \cite{9}, respectively. Configurations by multiple curves (surfaces) are of an interest too. These are 
observed \cite{Zamaraeva2016,Zamaraeva2017,zunic-ieee}, mainly because of
applications in the neural networks and machine learning theory \cite{gpz,zunic-ieee}.
Further attempts to get richer structures that can be used for data analysis (e.g. the related object classification) based on machine 
learning approaches, can be obtained by increasing the space dimension. A use of polynomials \cite{anthony}, 
or particular subsets of them \cite{bruck},  
of a large number of variables  has been considered already. Treating and computing Boolean functions by suitable selection of 
polynomial threshold functions has also been of interest \cite{boolean,boolean-2}.

In the majority of approaches where the multiple lines and surfaces were used, the partitioning surfaces are not assumed to be in general position 
(e.g. parallel hyperplanes or $d$-dimensional spheres centered at the same point). The situation, once the data space partition surfaces are in general position,
can be much more difficult, especially for a proper  performance analysis of the  methods proposed. Herein, as 
it has been mentioned above, we consider one of such problems.

The asymptotics of the number of threshold functions for square grids was first obtained in \cite{Koplowitz}:
$$
t(n,n) = \frac{6}{\pi^2}n^4 + O(n^3  \log n),
$$
and for arbitrary rectangular grids in \cite{Acketa}:
$$
t(m,n) = \frac{6}{\pi^2}m^2n^2 + O(m^2 n \log n \ +  \  mn^2 \log \log n),
$$
where $m < n$ is assumed.

The current best known formula was obtained in \cite{Haukkanen}:
$$
t(m,n) = \frac{6}{\pi^2}m^2n^2 + O(mn^2).
$$

An important point to note here is that all the above results are based on the relation between non-constant threshold functions and segments with specific properties, and similar methods are used in this paper.

The above asymptotics provides a trivial upper bound on the number of $k$-threshold functions for a fixed $k > 1$:

\begin{flalign}
\label{eq:k_trivial_upper_bound}
\nonumber t_k(m,n) \leq {t(m,n) \choose k} &= \frac{t(m,n)^k}{k!} + O\left(t(m,n)^{k-1}\right)\\
&= \frac{6^k}{\pi^{2k}k!}m^{2k}n^{2k} + O\left(m^{2k-1}n^{2k}\right).
\end{flalign}
However, no asymptotics was known for the number of $k$-threshold functions for any $k > 1$. 

In order to prove \cref{th:t_k}, we will first make use of the structural characterization of $2$-threshold functions from \cite{Part1} to reduce the problem to the enumeration of pairs of prime segments in convex position. 
Then we will derive the asymptotic formula for the number of such pairs.


The organization of the paper is as follows.
All preliminary information can be found in \cref{sec:preliminaries}.
In \cref{sec:pair_oriented_segments} we recall the results from \cite{Part1} introducing \emph{proper} pairs of segments and describing their relation to $2$-threshold functions.
We also express the number of $2$-threshold functions through the number of proper pairs of segments.
In \cref{sec:from2thresholdToProperPairs} we reveal the relation between proper pairs of segments and pairs of prime segments in convex position and reduce the problem of the estimation of the number of $2$-threshold functions to that of pairs of prime segments in convex position.
Finally, \cref{sec:numberOfProperPairs} is devoted to the estimation of the number of pairs of prime segments in convex position.
In \cref{sec:k-threshold-number} we use the obtained formula to improve the trivial upper bound on the number of $k$-threshold functions (\ref{eq:k_trivial_upper_bound}) for $k \geq 3$.

\section{Preliminaries}
\label{sec:preliminaries}

In this paper we use capital letters $A,B,C$ etc. to denote points on the plane. 
The distance between two points $A$ and $B$ is denoted by $d(A,B)$.
The distance between two sets of points $S_1$, $S_2$ is the minimum distance between two points $A \in S_1$ and $B \in S_2$ and denoted by $d(S_1,S_2)$.
The distance between a point and a set of points is denoted analogously.
The line passing through two distinct points $A$ and $B$ is denoted by $\ell(AB)$.
For a convex polygon $\mathcal P$ we denote by $\Area(\mathcal P)$ the area of $\mathcal P$.

We say that a point $A = (x,y)$ is \emph{integer}, if both of its coordinates $x$ and $y$ are integer.
If two distinct integer points $A$, $B$ are the only integer points on the segment $AB$, we say that the segment $AB$ is \textit{prime}.

For two integers $p$ and $q$ we will write $p \perp q$ if they are coprime, i.e. the greatest common divisor of $p$ and $q$ is $1$.

For a polygon $P$ denote by $\Vertic(P)$ the set of vertices of $P$. 
We say that the points $A_1, A_2, \dots, A_n$ are in convex position if $\{A_1,\dots,A_n\} = \Vertic(\Conv(\{A_1,\dots,A_n\}))$.
We will say that two segments are \emph{in convex position} if they are opposite sides of a convex quadrilateral.
We also denote by $P(f)$ the convex hull of true points of $f$, that is $P(f) = \textup{Conv}(M_1(f))$.

Let $\mathcal C$ be a convex set. 
A convex polygon $\mathcal P$ is called \textit{circumscribed} about $\mathcal C$ if for every edge $AB$ of $\mathcal P$ the line $\ell(AB)$ is a tangent to $\mathcal C$ and $AB \cap \mathcal{C} \neq \emptyset$.
We denote by $B(\gridMN)$ the set of points of $\gridMN$ belonging to its boundary, i.e.
$$
B(\gridMN) = \{(x_1,x_2) \in \gridMN | x_1 \in \{0, m-1\} \text{ or }  x_2 \in \{0,n-1\}\}.
$$

\subsection{Segments, triangles, quadrilaterals and their orientation}
\label{sec:seg_tr_orientation}

We often denote a \emph{convex} polygon 
by a sequence of its vertices in either clockwise or counterclockwise order. 
For instance, we denote by
\begin{itemize}
\item [$AB$] the segment with endpoints $A,B$;
\item [$ABC$] the triangle with vertices $A,B,C$;
\item [$ABCD$] the convex quadrilateral with edges $AB$, $BC$, $CD$, $DA$.
\end{itemize}

We call a polygon or a segment \emph{oriented} and add an arrow in the notation if the order of vertices is important. 
For example, $\dir{AB}$, $\dir{ABC}$, $\dir{ABCD}$ denote the oriented segment, the oriented triangle, and the oriented convex quadrilateral, respectively.
An oriented convex polygon is \textit{clockwise} or \textit{counterclockwise} depending on the orientation of the rotation.
If $\dir{ABC}$ is a clockwise triangle, then $\dir{ACB}$ is a counterclockwise triangle and vice versa.

\subsection{Number theoretic preliminaries}
\label{sec:numberTheoryPrlim}

In the subsequent sections we will use the following formulas.
For the harmonic number:
\begin{equation}
\label{eq:1/i}
\sum_{i=1}^{n}\frac{1}{i}=\log n + \gamma + O\left( \frac{1}{n} \right) = \log n + O(1),
\end{equation}
where $\gamma$ is the Euler-Mascheroni constant.

For a fixed natural $k$ the asymptotics of the sum of $k$-th powers can be estimated as
\begin{equation}
\label{eq:sum_powers}
\sum_{i=1}^{n} i^k = \frac{n^{k+1}}{k+1} + O(n^{k}).
\end{equation}

Also, for a fixed natural $q$ and integer $k\geq 0$ we have
\begin{equation}
\label{eq:sum_euler_gen}
\sum_{\substack{p = 1 \\ p \perp q}}^{n} p^k = \frac{\phi (q)}{q}\frac{n^{k+1}}{(k+1)} + O(n^k2^{w(q)}),
\end{equation}
where $\phi(q)$ is the Euler function, $w(q)$ is the number of different prime divisors of $q$ and

\begin{equation}
\label{eq:sum_w}
\sum_{q =1}^{n} O(2^{w(q)})= O(n \log n).
\end{equation}

For the negative powers of $p$ we have

\begin{equation}
\label{eq:sum_euler_divide_new}
\sum_{\substack{p = 1 \\ p \perp q}}^{n} \frac{1}{p} = \frac{\phi (q)}{q}\log n  + \frac{\phi(q)}{q}\gamma + \frac{\phi(q)}{q}O\left(\frac{1}{n}\right) - \sum_{d|q}\mu(d)\frac{1}{d}\log d.
\end{equation}

Some sums regarding the Euler function are as follows:
\begin{equation}
\label{eq:sum_phi_ln}
\sum_{x=1}^{n}\phi(x)\log(x) = \frac{3}{\pi^2}n^2\log n - \frac{3}{2\pi^2}n^2 + o(n^2).
\end{equation}

The general formula for the power of $x$ follows:

\begin{equation}
\label{eq:sum_phi_n_k}
\sum_{x=1}^{n}\phi(x)x^k = \frac{6}{\pi^2}\frac{n^{k+2}}{(k+2)} + O(n^{k+1} \log n),
\end{equation}
where $k$ is integer.

More details about the derivations of the previous sums are provided in \cite{Euler} and \cite{apostol}.

The following sum is obtained from (\ref{eq:sum_euler_gen}) and (\ref{eq:sum_phi_n_k}):

\begin{flalign}
\label{eq:sum_coprime}
\sum_{p = 1}^{m}\sum_{\substack{q = 1 \\ q \perp p}}^{n}1 = \frac{6}{\pi^2} mn + O(m \log n).
\end{flalign}

For the M\"{o}bius function $\mu_{n}$ we have (\cite{apostol}):
\begin{equation}
\label{eq:mobius}
    \sum\limits_{n | k} \mu_{n} \ = \ 
		\left\{ 
		\begin{array}{cl}
		1 & \makebox{if} \ \  k = 1 \\[0.2cm]  
		0 & \makebox{if} \ \  k > 1. 
		\end{array}
		\right.
\end{equation}
where $n | k$ means that $n$ is a divisor of $k$, and
\begin{equation}
\label{eq:mobius_riemann} 
\sum_{n=1}^{\infty} \frac{\mu(n)}{n^2} \ = 
\frac{1}{\zeta(2)} \ =  \frac{6}{\pi^2},
\end{equation}
where $ \zeta(n)$ is the Riemann zeta function.

\section{Proper pairs of oriented prime segments and the number of $2$-threshold functions}
\label{sec:pair_oriented_segments}

The known asymptotic formulas for the number of threshold functions (\cite{Koplowitz, Acketa, Haukkanen}) are based on their relation to (oriented) prime segments.

\begin{definition}\label{def:segment_defines_f}
Let $AB$ be a prime segment and $A,B \in \gridMN$.
We say that $\dir{AB}$ \textit{defines} a $\{0,1\}$-valued function $f$ on $\gridMN$ if:
\begin{enumerate}
\item $f(A) = 1, f(B) = 0$;
\item for any $X \in \gridMN \cap \ell(AB)$ we have $f(X) = 1$ if and only if $d(A,X) < d(B,X)$;
\item for any $X \in \gridMN \setminus \ell(AB)$ we have $f(X) = 1$ if and only if $\ortr{A}{B}{X}$ is a counterclockwise triangle.
\end{enumerate}
The function defined by $\dir{AB}$ is denoted as $f_{\dir{AB}}$.
\end{definition}

It was proved in \cite{Koplowitz} that the function $f_{\dir{AB}}$ defined by an oriented prime segment $\dir{AB}$ is threshold.
Moreover, in the same paper, a bijection between all non-constant threshold functions and oriented prime segments in $\gridMN$ was established.

Following a similar approach with $2$-threshold functions, we introduced the next definition in~\cite{Part1}:

\begin{definition}
We say that a pair of oriented prime segments $\dir{AB}, \dir{CD}$ in $\gridMN$ \textit{defines} a $2$-threshold function $f$ on $\gridMN$ if
$$
f = f_{\dir{AB}} \land f_{\dir{CD}}.
$$
\end{definition}

However, since the same $2$-threshold function can be represented as the conjunction of different pairs of threshold functions, it can be defined by different pairs of oriented prime segments.
To overcome this difficulty, we imposed an extra restriction on pairs of segments, which resulted in the notion of \emph{proper} pairs of segments \cite{Part1}.

\begin{definition}
We say that a pair of oriented segments $\dir{AB}, \dir{CD}$ is \emph{proper} if the segments are prime and 
$$
f_{\dir{CD}}(A) = f_{\dir{CD}}(B) = f_{\dir{AB}}(C) = f_{\dir{AB}}(D) = 1.
$$
\end{definition}

In  \cite{Part1} we established the following connections between proper $2$-threshold functions and proper pairs of segments.

\begin{claim} [\cite{Part1}] \label{cl:proper2proper}
	Every proper pair of segments defines a proper $2$-threshold function.
\end{claim}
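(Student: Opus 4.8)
The plan is to prove that a proper pair of oriented prime segments $\dir{AB}, \dir{CD}$ defines a function $f = f_{\dir{AB}} \land f_{\dir{CD}}$ that is a \emph{proper} $2$-threshold function, i.e.\ that $f$ is genuinely $2$-threshold and not $1$-threshold (threshold). Since $f$ is by construction the conjunction of two threshold functions, it is automatically $2$-threshold; the entire content of the claim lies in showing it is \emph{not} threshold. So the heart of the argument is to exhibit an obstruction to linear separability of $M_0(f)$ and $M_1(f)$, i.e.\ to produce a point of $\Conv(M_0(f)) \cap \Conv(M_1(f))$, using only the defining properness condition $f_{\dir{CD}}(A) = f_{\dir{CD}}(B) = f_{\dir{AB}}(C) = f_{\dir{AB}}(D) = 1$.

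First I would unpack what the properness condition buys us in terms of the four grid points $A,B,C,D$. By \cref{def:segment_defines_f} we have $f_{\dir{AB}}(A)=1$, $f_{\dir{AB}}(B)=0$, and symmetrically $f_{\dir{CD}}(C)=1$, $f_{\dir{CD}}(D)=0$. Combined with properness, I can compute $f$ on these four points: $f(A) = f_{\dir{AB}}(A) \land f_{\dir{CD}}(A) = 1 \land 1 = 1$, whereas $f(B) = f_{\dir{AB}}(B) \land f_{\dir{CD}}(B) = 0 \land 1 = 0$, and likewise $f(C) = 1$ while $f(D) = 0$. Thus $A,C \in M_1(f)$ and $B,D \in M_0(f)$. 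This is the key local configuration: each segment has its ``true'' endpoint classified as a true point of $f$ and its ``false'' endpoint as a false point of $f$, and crucially the true endpoint of each segment lies on the positive side of the \emph{other} segment.

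Next I would extract the separation obstruction from this configuration. Because $\dir{AB}$ defines a threshold function, the line $\ell(AB)$ separates the positive and negative sides, with $f_{\dir{AB}}$ detecting on which side a point lies via the orientation of $\ortr{A}{B}{X}$; the properness conditions $f_{\dir{AB}}(C)=f_{\dir{AB}}(D)=1$ say that both $C$ and $D$ lie strictly on the counterclockwise (positive) side of $\dir{AB}$, and symmetrically $A,B$ lie on the positive side of $\dir{CD}$. I would then argue that the quadrilateral determined by $A,B,C,D$ forces a crossing: segment $AC$ (joining two true points) and segment $BD$ (joining two false points) must intersect, so their common point lies in $\Conv(M_1(f)) \cap \Conv(M_0(f))$, which by the very definition of threshold shows $f$ is not threshold. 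The mutual side conditions guarantee that $A,B$ and $C,D$ interleave around the convex hull of $\{A,B,C,D\}$ in the cyclic order $A,B,C,D$ (or its reverse), so the diagonals $AC$ and $BD$ are precisely the crossing diagonals of a convex quadrilateral; this is exactly the ``opposite sides of a convex quadrilateral'' language introduced in the preliminaries, which is where I expect the segments-in-convex-position notion to enter.

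The main obstacle, and the step I would treat most carefully, is the boundary/degenerate analysis: verifying that the crossing genuinely occurs and that the intersection point really certifies non-separability. One must rule out configurations where the four points are collinear or where a point lies \emph{on} the line $\ell(AB)$ rather than strictly to one side, since \cref{def:segment_defines_f} treats on-line points by the distance comparison in clause~(2) rather than by orientation in clause~(3). Handling these collinear and on-line cases — showing that even then the properness conditions place the true points and false points so as to still produce a point in $\Conv(M_1(f)) \cap \Conv(M_0(f))$, or else that such degeneracies are incompatible with primeness and the grid constraints — is the delicate part. Once the clean crossing case and these degenerate cases are both dispatched, the conclusion that $\Conv(M_0(f)) \cap \Conv(M_1(f)) \neq \emptyset$, and hence that $f$ is proper $2$-threshold, follows immediately.
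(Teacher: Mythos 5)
This claim is quoted from \cite{Part1} and the present paper gives no proof of it, so there is nothing in the text to compare your argument against line by line; I can only assess your reconstruction on its merits, and it is essentially the right one. Your reduction is correct: $f$ is $2$-threshold by definition as a conjunction of two threshold functions, the properness condition together with \cref{def:segment_defines_f} gives $f(A)=f(C)=1$ and $f(B)=f(D)=0$, and it then suffices to produce a point of $\Conv(\{A,C\})\cap\Conv(\{B,D\})\subseteq\Conv(M_1(f))\cap\Conv(M_0(f))$. In the generic case the four orientation conditions ($\ortr{A}{B}{C}$, $\ortr{A}{B}{D}$, $\ortr{C}{D}{A}$, $\ortr{C}{D}{B}$ all counterclockwise) do force the counterclockwise cyclic order $A,B,C,D$ on the convex hull, so $AC$ and $BD$ are the crossing diagonals; note that the common counterclockwise orientation is what rules out the other convex quadrilateral $ABDC$ with $AB$, $CD$ as opposite edges, in which $AC$ and $BD$ would be non-crossing edges, so this step deserves the explicit computation rather than just the word ``interleave.''

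One substantive correction to your closing paragraph: the degenerate configurations are \emph{not} incompatible with primeness and the grid, so you cannot hope to discharge them that way. They occur among genuinely proper pairs and are exactly cases (1) and (2) of \cref{th:superb_rectangle}: either all four points are collinear with $AC\subset BD$, or one of $A$, $C$ lies on the segment $BD$ with the remaining point off the line. In both situations the desired intersection point is immediate (a true point $A$ or $C$ lies on the segment $BD$ joining two false points), so the conclusion survives, but the case analysis must be carried out rather than dismissed. Indeed, the cleanest way to organize your proof is to run it directly off the trichotomy of \cref{th:superb_rectangle}, which the paper does import from \cite{Part1}: each of the three cases hands you a point of $\Conv(\{A,C\})\cap\Conv(\{B,D\})$ with no further geometry.
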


\begin{theorem} [\cite{Part1}] \label{th:proper_exists}
For any proper $2$-threshold function $f$ on $\gridMN$ there exists a proper pair of segments in $\gridMN$ that defines $f$.
\end{theorem}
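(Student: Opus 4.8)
The plan is to start from an arbitrary decomposition of $f$ into threshold functions and then adjust it so that the two associated prime segments satisfy the proper condition. Since $f$ is proper $2$-threshold it is in particular $2$-threshold, so I may write $f = g_1 \land g_2$ with $g_1, g_2$ threshold. Neither $g_i$ can be constant: if $g_1 \equiv 1$ then $f = g_2$ would be threshold, and if $g_1 \equiv 0$ then $f \equiv 0$; both contradict that $f$ is proper $2$-threshold (hence not threshold). So both $g_i$ are non-constant, and by the bijection of \cite{Koplowitz} there are oriented prime segments with $g_1 = f_{\dir{A_0B_0}}$ and $g_2 = f_{\dir{C_0D_0}}$. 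This already exhibits a defining pair; what remains is to replace it by a proper one.

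Next I would separate the false points according to which threshold function cuts them off. Set
$$
F_1 = M_0(g_1) \cap M_1(g_2), \qquad F_2 = M_1(g_1) \cap M_0(g_2).
$$
A short argument shows both are nonempty: if, say, $F_1 = \emptyset$, then $M_1(g_2) \subseteq M_1(g_1)$, whence $M_1(f) = M_1(g_2)$ and $f = g_2$ is threshold, which is impossible. I would also record the identity $M_1(g_2) \setminus M_1(f) = F_1$, the crucial bookkeeping fact: a threshold function $g_1'$ satisfies $g_1' \land g_2 = f$ if and only if, inside $M_1(g_2)$, it is true exactly on $M_1(f)$ and false exactly on $F_1$. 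In particular, any oriented prime segment whose line separates $M_1(f)$ from $F_1$, with $M_1(f)$ on its counterclockwise (true) side, defines together with $g_2$ the same function $f$.

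The heart of the argument is the construction of a good segment for $g_1$. Working inside $M_1(g_2)$, the sets $P(f) = \Conv(M_1(f))$ and $\Conv(F_1)$ are disjoint convex lattice polygons. I would choose a common supporting line carrying an edge of $P(f)$ that faces $F_1$, take $A$ to be the vertex of $P(f)$ on that line nearest to $F_1$, and take $B$ to be the lattice point following $A$ along the line on the $F_1$-side; arranging that $B \in F_1$ and that $AB$ contains no further lattice point makes $\dir{AB}$ a prime segment with $A \in M_1(f)$ and $B \in F_1$, oriented so that $P(f)$ lies on its counterclockwise side. By the previous paragraph $f_{\dir{AB}} \land g_2 = f$, and since both $A$ and $B$ lie in $M_1(g_2)$ we get $f_{\dir{C_0D_0}}(A) = f_{\dir{C_0D_0}}(B) = 1$. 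Performing the symmetric construction for $g_2$ relative to $F_2$ yields $\dir{CD}$ with $C \in M_1(f)$, $D \in F_2$, and $f_{\dir{AB}}(C) = f_{\dir{AB}}(D) = 1$; together with $f_{\dir{AB}} \land f_{\dir{CD}} = f$, this is exactly the proper pair sought.

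The step I expect to be the main obstacle is the simultaneous realization of all four endpoint conditions by genuinely prime segments. One must guarantee that the lattice point $B$ adjacent to the vertex $A$ along the chosen supporting line actually lands in $F_1$, and in particular stays on the true side of $g_2$, rather than falling into the doubly-cut region $M_0(g_1) \cap M_0(g_2)$; this is delicate precisely near the corner of $P(f)$ where the two separating edges meet. One also has to coordinate the two constructions so that replacing $g_1$ does not spoil the condition just arranged for $g_2$, which I would handle by fixing $F_1, F_2$ once from the original decomposition and using that each chosen supporting line meets $P(f)$ only along its relevant edge, and to dispose of degenerate configurations, such as $P(f)$ or some $\Conv(F_i)$ being a single point or a segment, or the two separating edges sharing a vertex. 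These case distinctions, rather than any single hard inequality, are where the real effort lies.
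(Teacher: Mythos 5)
This paper does not actually prove \cref{th:proper_exists}: it is imported verbatim from \cite{Part1}, so there is no in-paper argument to compare yours against. Judged on its own, your proposal sets up the right framework --- the non-constancy of $g_1,g_2$, the nonemptiness of $F_1,F_2$, and the observation that $g_1'\land g_2=f$ iff $g_1'$ is true on $M_1(f)$ and false on $F_1$ are all correct and cleanly argued --- but the construction in your third paragraph, which is the entire content of the theorem, is not carried out, and the two obstacles you flag at the end are genuine and unresolved. First, an edge of $P(f)$ whose supporting line strictly separates $P(f)$ from $F_1$ need not exist (the separating line guaranteed for two disjoint convex lattice polygons may be parallel to an edge of $\Conv(F_1)$ instead, and $P(f)$ or $\Conv(F_1)$ may be a point or a segment); and even when such an edge exists, the lattice point $B$ following $A$ along its line can fall into $M_0(g_1)\cap M_0(g_2)$ or outside the grid rather than into $F_1$, exactly as you fear.

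Second, the coordination problem is worse than you present it, and ``fixing $F_1,F_2$ once from the original decomposition'' does not resolve it. Properness requires $f_{\dir{CD}}(B)=1$ for the \emph{new} second segment, but $B\in F_1\subseteq M_0(f_{\dir{AB}})$, and the requirement $f_{\dir{AB}}\land f_{\dir{CD}}=f$ only constrains $f_{\dir{CD}}$ inside $M_1(f_{\dir{AB}})$; so $B\in M_1(g_2)$ for the old $g_2$ gives no information about $f_{\dir{CD}}(B)$. Symmetrically, you assert $f_{\dir{AB}}(D)=1$ for $D\in F_2\subseteq M_0(g_2)$, but $f_{\dir{AB}}$ was only arranged to agree with $g_1$ on $M_1(g_2)$, so this does not follow either; the symmetry you invoke would only give $f_{\dir{A_0B_0}}(D)=1$ for the original segment, which is not what properness demands. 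In short, the strategy (normalize an arbitrary defining pair into a proper one) is plausible, but every one of the four endpoint conditions in the definition of a proper pair is left unverified at the point where it is hard, so this is a plan rather than a proof.
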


\begin{theorem} [\cite{Part1}] \label{th:proper_unique}
		For any proper $2$-threshold function on $\gridMN$ that contains a true point on the boundary of $\gridMN$ there exists a unique proper pair of segments in $\gridMN$ that defines $f$.
\end{theorem}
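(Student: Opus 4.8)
The plan is to reconstruct the two defining segments directly from $f$. Since \cite{Koplowitz} gives a bijection between non-constant threshold functions and oriented prime segments in $\gridMN$, an oriented segment $\dir{AB}$ is determined by the threshold function $f_{\dir{AB}}$ it defines. Consequently, to prove uniqueness of the proper pair it suffices to show that the unordered pair of threshold functions $\{f_{\dir{AB}}, f_{\dir{CD}}\}$ is uniquely determined by $f$. Existence of at least one proper pair is guaranteed by \cref{th:proper_exists}, so I would assume that two proper pairs define $f$ and prove that they yield the same unordered pair of threshold functions, and hence the same pair of segments.

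First I would locate each defining segment on the boundary of $P(f) = \Conv(M_1(f))$. Writing $d = B - A$ for the primitive direction of $\dir{AB}$, \cref{def:segment_defines_f} shows that every true point of $f_{\dir{AB}}$ lies in the closed half-plane to the left of the oriented line through $\dir{AB}$, while on that line the true points are exactly $A$ and the lattice points behind it. As $M_1(f) \subseteq M_1(f_{\dir{AB}})$, the line $\ell(AB)$ supports $P(f)$; moreover $A$ is the true point on $\ell(AB)$ extremal in the direction $d$, so $A$ is a vertex of $P(f)$, and $B = A + d$ is the first grid point across the cut. The properness requirement $f_{\dir{CD}}(A) = f_{\dir{CD}}(B) = 1$ forces this cut to be \emph{active}, meaning that $B$ is a false point of $f$ excluded by $\dir{AB}$ alone while lying on the true side of the second cut. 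The same analysis applies to $\dir{CD}$, with $C$ a vertex of $P(f)$ and $D$ the adjacent excluded grid point along $\ell(CD)$.

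Next I would show that the two active cuts are forced by $f$. Since $f$ is \emph{proper} $2$-threshold, the false points neighbouring $P(f)$ cannot be cut off by a single line, so at least two active cuts are required; and since $f$ is a conjunction of two threshold functions, two suffice. I would argue that the endpoints $A, B, D, C$ form a convex quadrilateral with $\dir{AB}$ and $\dir{CD}$ as opposite sides, that each active cut must pass through a prescribed vertex of $P(f)$, and that along each such supporting line only one primitive segment is compatible with both \cref{def:segment_defines_f} and the properness condition. This would recover the unordered pair $\{f_{\dir{AB}}, f_{\dir{CD}}\}$ from $P(f)$ together with the configuration of nearby false points.

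The hard part will be this last step, and it is exactly here that the hypothesis of a true point on $B(\gridMN)$ is needed. When $A$ is a vertex of $P(f)$ at which the supporting line meets $P(f)$ only at $A$, the slope of the cut is not determined by $P(f)$ alone: the supporting line may be rotated about $A$, changing $d$ and hence $B$, and such a rotation can leave $M_1(f)$ unchanged provided the second cut absorbs the difference in the small region where the two cuts interact. This is the mechanism that could otherwise produce two distinct proper pairs defining the same interior function. I would show that a true point on the boundary of $\gridMN$ anchors a vertex of $P(f)$ against a side of the grid, eliminating this rotational freedom: any rotated or alternative cut would either fail to be prime, violate the properness condition, or change the value of $f$ at a boundary grid point. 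A case analysis according to how $P(f)$ meets $B(\gridMN)$ then forces the two proper pairs to coincide, completing the proof.
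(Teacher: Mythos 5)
The paper does not actually prove this statement --- it is imported from \cite{Part1} --- so there is no internal proof to compare against; I can only judge the proposal on its own terms. As written it is a plan rather than a proof: the entire content of the theorem, namely that two proper pairs defining the same $f$ must coincide, is deferred to an unexecuted ``case analysis according to how $P(f)$ meets $B(\gridMN)$''. Your preliminary reductions are sound (the Koplowitz bijection does reduce uniqueness of the segment pair to uniqueness of the unordered pair of threshold functions; $A$ and $C$ are vertices of $P(f)$ with $B$, $D$ the adjacent excluded lattice points; and properness does force each of $B$, $D$ to be cut off by exactly one of the two functions), and you correctly identify the rotational ambiguity of a supporting line at a vertex as the mechanism the boundary hypothesis must eliminate. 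But naming the obstruction is not overcoming it: your final paragraph asserts that ``any rotated or alternative cut would either fail to be prime, violate the properness condition, or change the value of $f$ at a boundary grid point,'' which is essentially a restatement of the theorem, with no argument supplied.

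There is also a concrete false step. You claim that $A,B,D,C$ always form a convex quadrilateral with $\dir{AB}$ and $\dir{CD}$ as opposite sides. \cref{th:superb_rectangle} explicitly allows proper pairs that are \emph{not} in convex position, e.g.\ all four endpoints collinear with $AC \subset BD$; such a pair defines a proper $2$-threshold function whose true points are the lattice points of a segment, and that segment can certainly meet $B(\gridMN)$, so these configurations are not excluded by the hypothesis of the theorem. (The paper has to dispose of exactly these degenerate pairs separately in \cref{cl:q_p}, which would be pointless if they did not occur.) Any correct uniqueness argument must cover these cases, and your quadrilateral-based localization of the two cuts does not; the case analysis you postpone would therefore need to be restructured, not merely filled in.
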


From \cref{th:proper_unique} it follows that there is a bijection between proper $2$-threshold functions having a true point on the boundary of the grid and the proper pairs of segments defining such functions. 
In this section we will estimate the number of $2$-threshold functions that \emph{do not} satisfy the conditions of \cref{th:proper_unique} and the number of proper pairs of segments that define those functions. We will then use these estimates to express 
the number of $2$-threshold functions via the number of proper pairs of segments.
Then, in the subsequent sections we will use this relation to derive that the number of $2$-threshold functions is asymptotically equal to the number of proper pairs of segments.


%

	\begin{claim}\label{cl:distance-1}
	Let $\{\dir{AB},\dir{CD}\}$ be a \superb pair of segments in $\gridMN$, and let $f$ be the 
	$2$-threshold function defined by $\{\dir{AB},\dir{CD}\}$. 
	If $f$ does not have true points on the boundary of the grid, i.e. 
	$M_1(f) \subseteq \{1,\dots,m-2\}\times\{1,\dots,n-2\}$, then the distances $d(A,\ell(CD))$ and $d(B,\ell(CD))$ do not exceed one.
	\end{claim}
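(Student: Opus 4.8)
The plan is to argue by contradiction. Both $A$ and $B$ lie on the line $\ell(AB)$ (being its defining endpoints) and on the true side of $\ell(CD)$ (by properness), so if the conclusion fails, at least one of them lies at $\ell(CD)$-distance strictly greater than $1$ while still being a true point of the respective factor. I would let $E$ denote whichever of $A,B$ has $d(E,\ell(CD))>1$ and then produce a true point of $f$ on $B(\gridMN)$, contradicting $M_1(f)\subseteq\{1,\dots,m-2\}\times\{1,\dots,n-2\}$. Because the distance to $\ell(CD)$ is affine along the segment $AB$ and both endpoints lie on its true side, bounding both endpoints by $1$ is exactly the claim, so handling $E$ uniformly for $A$ and $B$ is enough.

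I would fix signed distances to encode the two threshold conditions. Let $\vec n$ be the unit normal to $\ell(CD)$ pointing to the true side of $f_{\dir{CD}}$ and put $g(X)=\vec n\cdot(X-C)$, so that $g(X)>0$ holds exactly on the open half-plane where $f_{\dir{CD}}=1$ off the line; properness gives $g(A),g(B)\ge 0$, and the contradiction hypothesis gives $g(E)>1$. Likewise let $\vec m$ be the unit normal to $\ell(AB)$ on its true side and $h(X)=\vec m\cdot(X-A)$, so that $f_{\dir{AB}}=1$ whenever $h>0$. Since $E\in\{A,B\}$ is an endpoint of the prime segment $AB$ we have $h(E)=0$: the point $E$ sits on one bounding line and is at distance exceeding $1$ from the other.

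The key step is to walk from $E$ to the grid boundary along a single coordinate direction, staying in $M_1(f)$ throughout. Coordinate directions are essential: a walk by unit steps $(\pm1,0)$ or $(0,\pm1)$ visits every lattice point of a full row or column, so the last lattice point it reaches before leaving $\gridMN$ necessarily lies on $B(\gridMN)$ — whereas stepping by the primitive vectors $B-A$ or $D-C$ could jump over the boundary without landing a lattice point on it. Since $\vec m\neq 0$, I can choose a coordinate direction $\vec e$ with $\vec e\cdot\vec m>0$; then $h(E+k\vec e)=k(\vec e\cdot\vec m)>0$ for every $k\ge 1$, so $f_{\dir{AB}}=1$ at each point after $E$. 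Moreover $|\vec e|=1$ forces $|g(E+\vec e)-g(E)|\le 1$, and the slack $g(E)>1$ makes $g(E+\vec e)>0$, so the walk indeed enters the open doubly-true region. If in addition $\vec e\cdot\vec n\ge 0$, then $g$ never decreases along the walk, $f_{\dir{CD}}=1$ holds all the way, and the last in-grid point of the walk is a true point of $f$ on $B(\gridMN)$ — the contradiction.

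The hard part, and the reason the threshold is exactly $1$ and not $0$, is the remaining regime in which $\ell(AB)$ and $\ell(CD)$ are nearly parallel with opposite orientations, so that every $f_{\dir{AB}}$-increasing coordinate direction decreases $g$ and no monotone coordinate walk can preserve both conditions. Here I expect to replace the walk by a width argument. The doubly-true wedge $W=\{X:h(X)\ge 0,\ g(X)\ge 0\}$ is convex and unbounded, contains $E$ with $g(E)>1$, and every lattice point in its interior is a true point of $f$; since $E$ is interior to the grid while $W$ is unbounded, $W$ must cross one of the four edges of $\gridMN$ in a sub-segment. It then remains to show that this sub-segment is long enough to contain a grid point strictly inside both half-planes. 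For this I would use the primality of $CD$: the lattice lines parallel to $\ell(CD)$ are spaced $1/|D-C|$ apart and the relevant lattice determinant giving $g$ is an integer, so the condition $g(E)>1$ can be converted into a lower bound of $1$ on the lattice length of $W$'s trace on the crossed edge. Pinning down that this spacing-and-width estimate succeeds precisely when the distance exceeds $1$ is the crux I expect to be the main obstacle.
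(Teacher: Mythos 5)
Your argument is incomplete: the case you yourself call ``the crux I expect to be the main obstacle'' is exactly the case that carries the content of the claim, and your sketch for it does not close. Concretely, when no coordinate direction $\vec e$ satisfies both $\vec e\cdot\vec m>0$ and $\vec e\cdot\vec n\ge 0$ (the nearly antiparallel regime), you fall back on the wedge $W=\{h\ge 0,\ g\ge 0\}$ and assert that its trace on an edge of $B(\gridMN)$ is ``long enough,'' but the quantitative step you propose is misdirected: primality of $CD$ and the spacing $1/|D-C|$ of lattice lines parallel to $\ell(CD)$ are irrelevant here. What one actually needs is that the grid points on a boundary edge of $\gridMN$ are unit-spaced, together with the observation that if both bounding rays of $W$ exit through the same edge at points $X_1\in\ell(AB)$ and $X_2\in\ell(CD)$, then $d(X_1,X_2)\ge d(X_1,\ell(CD))\ge d(E,\ell(CD))>1$, so the open subsegment contains a lattice point interior to $W$; and one must separately treat the case where $W$ swallows a corner of the rectangle, and the parallel case where $W$ is a strip rather than a wedge. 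None of this is done. A second, smaller gap: you assert $E$ is interior to the grid, but only $A$ is guaranteed interior (since $f(A)=1$), whereas $f(B)=0$, so for $E=B$ on the boundary the very first step of your walk may already leave $\gridMN$ and the contradiction evaporates; this subcase needs an explicit argument.

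For comparison, the paper avoids the angular case split entirely by working along $\ell(AB)$ itself: it extends $AB$ past $B$ away from $O=\ell(AB)\cap\ell(CD)$ to the exit point $X\in B(\gridMN)$, picks the boundary grid point $Z$ nearest to $X$ with $f_{\dir{AB}}(Z)=1$ (so $d(X,Z)\le 1$), deduces $f_{\dir{CD}}(Z)=0$ from the hypothesis, hence $\ell(CD)$ separates $X$ from $Z$ or contains $Z$, giving $d(X,\ell(CD))\le 1$; monotonicity of the distance to $\ell(CD)$ along the ray $OX$ then bounds both $d(A,\ell(CD))$ and $d(B,\ell(CD))$ by $d(X,\ell(CD))$. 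You would do well to adopt that single uniform mechanism rather than patching two regimes with different tools.
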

	\begin{proof}
	The statement is obvious for $\ell(AB) = \ell(CD)$, so we assume that $AB$ and $CD$ are not collinear.

	Let us first assume that $\ell(AB)$ and $\ell(CD)$ are not parallel and denote by $O$ the intersection point of the two lines.
	We start by showing that there exists a point $X \in \ell(AB) \cap B(\gridMN)$ such that $AB \subseteq OX$.
	Indeed, since $f(A) = 1$, the point $A$ is an interior point of $\Conv(\gridMN)$, and hence the line $\ell(AB)$ intersects
	$B(\gridMN)$ in exactly two points, which we denote by $X$ and $Y$.
	Furthermore, as $\ell(CD)$ does not separate $A$ and $B$, we have either $AB \subseteq OX$ or $AB \subseteq OY$.
	Without loss of generality assume $AB \subseteq OX$.
	Let $Z \in B(\gridMN)$ be the closest point to $X$ such that $f_{\dir{AB}}(Z) = 1$. Clearly, $d(X,Z) \leq 1$.
	The assumption $M_1(f) \subseteq \{1,\dots,m-2\}\times\{1,\dots,n-2\}$ implies that $f(Z) = 0$, and therefore $f_{\dir{CD}}(Z) = 0$.
	Hence, either $Z \in \ell(CD)$ or the triangle $\ortr{C}{D}{Z}$ is clockwise.
	The former implies that $d(X, \ell(CD)) \leq 1$.
	The latter leads to the same conclusion, if we notice that the triangle $\ortr{C}{D}{X}$ is counterclockwise as $X$ and $A$ 
	lie on the same side of $\ell(CD)$, and hence $\ell(CD)$ intersects $XZ$.
	Finally, since $A,B \in OX$, we conclude that $\max\{d(A,\ell(CD)),d(B,\ell(CD))\} \leq d(X,\ell(CD)) \leq 1$, as required.
	
	The proof for parallel $\ell(AB)$ and $\ell(CD)$ is similar and uses the fact that the distance from any point of 
	$\ell(AB)$ to $\ell(CD)$ is the same. 
	\end{proof}

	\begin{claim}\label{cl:no-ones-on-grid}
	There are $O(m^2n^2(m+n)^2)$ \superb pairs of segments $\{\dir{AB}, \dir{CD}\}$ in $\gridMN$ such that
	the 2-threshold function defined by $\{\dir{AB}, \dir{CD}\}$ does not have true points on the boundary of $\gridMN$.
	\end{claim}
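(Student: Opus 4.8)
The plan is to use \cref{cl:distance-1} together with its mirror image to confine the endpoints $C$ and $D$ to a thin strip around the line $\ell(AB)$, and then to count lattice points in such a strip.

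First I would observe that the hypotheses of \cref{cl:distance-1} are symmetric in the two oriented segments: both the properness of $\{\dir{AB},\dir{CD}\}$ and the conjunction $f = f_{\dir{AB}} \land f_{\dir{CD}}$ are unchanged if we interchange $\dir{AB}$ and $\dir{CD}$. Applying \cref{cl:distance-1} to the relabelled pair therefore yields $d(C,\ell(AB)) \leq 1$ and $d(D,\ell(AB)) \leq 1$, i.e. both $C$ and $D$ lie in the closed strip of width $2$ centred on $\ell(AB)$.

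Next I would set up the count by fixing the first segment. There are at most $(mn)^2 = O(m^2n^2)$ ordered pairs of grid points, hence $O(m^2n^2)$ choices for the oriented prime segment $\dir{AB}$. For a fixed $\dir{AB}$, write $S_{AB} = \{X \in \gridMN : d(X,\ell(AB)) \leq 1\}$. By the previous paragraph every admissible $C$ and $D$ lies in $S_{AB}$, so, dropping the primality and properness constraints on $\dir{CD}$, the number of second segments to be counted for this $\dir{AB}$ is at most $|S_{AB}|^2$. Summing over all $\dir{AB}$, the total number of proper pairs in question is $O\!\left(m^2n^2 \cdot \max_{AB}|S_{AB}|^2\right)$, and the claim reduces to proving $|S_{AB}| = O(m+n)$.

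The main obstacle is this lattice-point estimate. To carry it out I would let $(p,q)$ with $\gcd(p,q)=1$ be the primitive direction vector of the prime segment $AB$ and decompose $\mathbb{Z}^2$ into the lattice lines parallel to $(p,q)$; consecutive such lines are at perpendicular distance $1/\sqrt{p^2+q^2}$, so at most $O(\sqrt{p^2+q^2})$ of them meet the strip $d(\cdot,\ell(AB)) \leq 1$. On each of these lines the grid points are spaced $\sqrt{p^2+q^2}$ apart and the portion inside $\gridMN$ has length $O(m+n)$, so each line contributes $O\!\left((m+n)/\sqrt{p^2+q^2}\right) + O(1)$ grid points. Multiplying and using $\sqrt{p^2+q^2} = O(m+n)$ gives $|S_{AB}| = O(m+n)$. (Equivalently, the strip intersected with $\Conv(\gridMN)$ has area and perimeter $O(m+n)$, so the number of lattice points in it is $O(m+n)$ by a standard boundary estimate.) Combining this with the reduction above yields the bound $O(m^2n^2(m+n)^2)$, as required.
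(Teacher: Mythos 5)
Your proof is correct and follows essentially the same route as the paper: the paper fixes $C$ and $D$ first ($O(m^2n^2)$ choices) and applies \cref{cl:distance-1} directly to confine $A$ and $B$ to the width-two strip around $\ell(CD)$, whereas you fix $\dir{AB}$ first and invoke the (valid) symmetry of properness to confine $C$ and $D$ to the strip around $\ell(AB)$ — an interchangeable choice. Your lattice-line argument for $|S_{AB}| = O(m+n)$ correctly fills in a step the paper merely asserts.
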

	
	\begin{proof}
	There are at most $mn$ ways to choose each of $C$ and $D$.
	Given the segment $CD$, by \cref{cl:distance-1}, each of $A$ and $B$ lies at distance at most one from 
	$\ell(CD)$. Since there are $O(m+n)$ such points, we conclude that there are $O(m^2n^2(m+n)^2)$ desired pairs of segments.
	\end{proof}

\begin{claim}\label{claim:non-proper-functions}
There are $O(m^2n^2(m+n)^2)$ $2$-threshold functions on $\grid_{m,n}$ that are either threshold or do not have true points on the boundary of $\grid_{m,n}$.
\end{claim}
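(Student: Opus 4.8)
The plan is to exploit the dichotomy that every $2$-threshold function is either threshold or proper $2$-threshold, and to treat the two resulting families separately. First observe that a function counted by the claim is necessarily of one of two kinds: either it is threshold, or it is proper $2$-threshold and has no true point on the boundary. Indeed, a proper $2$-threshold function that \emph{does} carry a boundary true point is neither threshold nor boundary-free, and is therefore excluded from the set in the statement. Hence the set in question is the union of (i) the threshold functions on $\gridMN$, and (ii) the proper $2$-threshold functions on $\gridMN$ all of whose true points lie in the interior $\{1,\dots,m-2\}\times\{1,\dots,n-2\}$. These two families are disjoint, since the functions in (ii) are by definition not threshold.

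For family (i) I would simply invoke the known count of threshold functions, $t(m,n) = \frac{6}{\pi^2}m^2n^2 + O(mn^2) = O(m^2n^2)$, which is in particular $O(m^2n^2(m+n)^2)$.

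For family (ii) the idea is to inject these functions into the set of proper pairs already bounded in \cref{cl:no-ones-on-grid}. Given a proper $2$-threshold function $f$ with no boundary true points, \cref{th:proper_exists} guarantees that there exists a proper pair of segments defining $f$; I would fix one such pair and call it $\Phi(f)$. Because a proper pair of segments determines its $2$-threshold function uniquely, the assignment $f \mapsto \Phi(f)$ is injective: two distinct functions cannot share a defining pair. Moreover, since $f$ has no true points on the boundary, the pair $\Phi(f)$ is exactly of the type counted in \cref{cl:no-ones-on-grid}. Consequently the number of functions in family (ii) is at most the number of such pairs, that is, $O(m^2n^2(m+n)^2)$. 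Adding the two estimates gives $O(m^2n^2) + O(m^2n^2(m+n)^2) = O(m^2n^2(m+n)^2)$, as required.

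The only point needing care is that one must appeal to the \emph{existence} of a defining proper pair (\cref{th:proper_exists}) rather than to uniqueness (\cref{th:proper_unique}). Uniqueness is unavailable here precisely because the functions in family (ii) have no boundary true points, so a single function may admit several defining proper pairs. The injection above circumvents this by choosing one representative pair per function, which suffices for an upper bound; no uniqueness is needed, only that distinct functions yield distinct chosen pairs.
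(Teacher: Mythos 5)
Your proposal is correct and follows essentially the same route as the paper: split off the $O(m^2n^2)$ threshold functions, then bound the proper $2$-threshold functions with no boundary true points by the proper pairs of segments counted in \cref{cl:no-ones-on-grid}, using \cref{th:proper_exists} only for existence of a defining pair. Your explicit remark that injectivity of the chosen-representative map follows from each pair defining a unique function is exactly the (implicit) counting step in the paper's proof.
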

\begin{proof}
Since the number of threshold functions is $\Theta(m^2n^2)$, it is enough to prove that the number of proper $2$-threshold functions that do not have a true point on the boundary of $\grid_{m,n}$ is $O(m^2n^2(m+n)^2)$.
Indeed, from \cref{th:proper_exists} it follows that every proper $2$-threshold function with no true points on the boundary of $\grid_{m,n}$ is defined by at least one proper pair of segments, and hence the number of such functions can not exceed the number of proper pairs of segments defining these functions, which is estimated in \cref{cl:no-ones-on-grid} as $O(m^2n^2(m+n)^2)$.
\end{proof}

We are now in a position to state formally the main result of the section.
Denote by $q(m,n)$ the number of \superb pairs of segments in $\gridMN$.

\begin{theorem}
\label{th:t_2=p}
	\begin{equation}\label{eq:t2ToAllSuperb}
		t_2(m,n) = q(m,n) + O\big(m^2n^2(m+n)^2\big).
	\end{equation}
\end{theorem}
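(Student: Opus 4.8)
The plan is to establish the relation $t_2(m,n) = q(m,n) + O(m^2n^2(m+n)^2)$ by carefully accounting for the discrepancy between counting $2$-threshold functions and counting proper pairs of segments. The main tool is the bijection guaranteed by \cref{th:proper_unique} between proper $2$-threshold functions \emph{having a true point on the boundary} of $\gridMN$ and the proper pairs of segments defining them. The difficulty is that this bijection does not cover all of $t_2(m,n)$: it excludes functions that are actually threshold (i.e.\ $1$-threshold) and proper $2$-threshold functions with no true point on the boundary. Both of these error classes must be shown to be absorbed into the $O(m^2n^2(m+n)^2)$ term, and symmetrically the proper pairs $q(m,n)$ contains pairs defining functions of these same excluded types, which must also be controlled.

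First I would partition the set of all proper pairs of segments counted by $q(m,n)$ according to the type of function they define. A proper pair defines, by \cref{cl:proper2proper}, a proper $2$-threshold function; this function either has a true point on the boundary of $\gridMN$ or it does not. Let $q_B(m,n)$ denote the number of proper pairs whose defined function has a true point on the boundary, and let $q_0(m,n)$ denote the number of proper pairs whose defined function has no true point on the boundary, so that $q(m,n) = q_B(m,n) + q_0(m,n)$. By \cref{cl:no-ones-on-grid} we have $q_0(m,n) = O(m^2n^2(m+n)^2)$, which handles one of the error terms immediately.

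Next I would count $t_2(m,n)$ by a matching partition of the $2$-threshold functions themselves. Every $2$-threshold function is either threshold, or a proper $2$-threshold function. The proper $2$-threshold functions split into those with a true point on the boundary and those without. By \cref{claim:non-proper-functions}, the number of $2$-threshold functions that are either threshold or have no true point on the boundary is $O(m^2n^2(m+n)^2)$; call the count of proper $2$-threshold functions with a true point on the boundary $t_B(m,n)$, so that $t_2(m,n) = t_B(m,n) + O(m^2n^2(m+n)^2)$. The crux is then to show $t_B(m,n) = q_B(m,n)$, which is exactly the bijection of \cref{th:proper_unique}: each such function is defined by a unique proper pair, and conversely each proper pair counted in $q_B(m,n)$ defines such a function, so the map sending a proper pair to its defined function restricts to a bijection between the $q_B(m,n)$ proper pairs and the $t_B(m,n)$ functions. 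Combining the three displayed relations gives
\begin{equation*}
t_2(m,n) = t_B(m,n) + O\big(m^2n^2(m+n)^2\big) = q_B(m,n) + O\big(m^2n^2(m+n)^2\big) = q(m,n) - q_0(m,n) + O\big(m^2n^2(m+n)^2\big),
\end{equation*}
and absorbing $q_0(m,n) = O(m^2n^2(m+n)^2)$ yields the claimed formula.

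The main obstacle is verifying that the bijection $t_B(m,n) = q_B(m,n)$ is truly a bijection in both directions, since \cref{th:proper_unique} as stated provides uniqueness of the defining proper pair for a given function, giving injectivity of the function-to-pair correspondence, while surjectivity onto $q_B(m,n)$ requires that every proper pair defining a boundary-touching function is \emph{the} unique pair for that function — which is precisely what the combination of \cref{cl:proper2proper} and \cref{th:proper_unique} provides. All remaining steps are bookkeeping with the already-established $O(m^2n^2(m+n)^2)$ bounds, so no new estimate is needed; the work is entirely in correctly classifying the two error families and invoking the prior results.
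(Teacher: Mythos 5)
Your proposal is correct and follows essentially the same route as the paper: both arguments introduce the count of proper $2$-threshold functions with a boundary true point (your $t_B=q_B$, the paper's $t_2'$), express $t_2(m,n)$ and $q(m,n)$ in terms of it via \cref{claim:non-proper-functions} and \cref{cl:no-ones-on-grid}, and invoke \cref{cl:proper2proper} together with \cref{th:proper_unique} for the bijection. No substantive difference.
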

\begin{proof}
	Denote by $t_{2}'(m,n)$ the number of proper $2$-threshold functions on $\gridMN$ with a true point on the boundary of the grid.
	To prove (\ref{eq:t2ToAllSuperb}), we will express both the number of $2$-threshold functions and the number of proper pairs of segments via $t_{2}'(m,n)$.
	
	We start with $2$-threshold functions.
	The set of 2-threshold functions is the disjoint union of three subsets:
	\begin{enumerate}
		\item threshold functions;
		\item proper $2$-threshold functions with no true point on the boundary of the grid;
		\item proper $2$-threshold functions having a true ponit on the boundary of the grid.
	\end{enumerate}
	Hence, by \cref{claim:non-proper-functions}, we have
	\begin{equation} \label{eq:t_2mn}
		t_2(m,n) = t_{2}'(m,n) + O(m^2n^2(m+n)^2).
	\end{equation}

	Now, to express the number of proper pairs of segments we recall that
	by \cref{cl:proper2proper} a proper pair of segments defines a proper 2-threshold function. Hence the set of proper pairs of segments can be partitioned into two subsets:
	\begin{enumerate}
		\item proper pairs of segments defining proper $2$-threshold functions with no true points on the boundary of the grid
		\item proper paris of segments defining proper $2$-threshold functions having a true ponit on the boundary of the grid
		\end{enumerate}
	Note that the number of pairs of segments in the second set is equal to $t_{2}'(m,n)$ due to 
	\cref{th:proper_unique}. Therefore, by \cref{cl:no-ones-on-grid}, we have
	$$
	q(m, n) = t_{2}'(m,n) + O(m^2n^2(m+n)^2).
	$$
	The above formula together with (\ref{eq:t_2mn}) imply the theorem.
\end{proof}

\cref{th:t_2=p} is useful if the order of the number of $2$-threshold functions (and proper pairs of segments) is larger than $m^2n^2(m+n)^2$.
It follows from (\ref{eq:k_trivial_upper_bound}) that $t_2(m,n) = O(m^4n^4)$. 
In the subsequent sections we will prove that $q(m,n) = \Theta(m^4n^4)$.




\section{Pairs of prime segments in convex position}
\label{sec:from2thresholdToProperPairs}

In this section we will reduce the estimation of the number of proper pairs of oriented segments to that of prime (non-oriented) segments in convex position.
First we will show that the number of those proper pairs of segments, which are not in convex position, does not affect the asymptotics.
To this end, we will use the structure of proper pairs of segments revealed in \cite{Part1}:

\begin{theorem}[\cite{Part1}]
\label{th:superb_rectangle}
The pair of prime segments $\dir{AB},\dir{CD}$ is \superb if and only if one of the following holds:
\begin{enumerate}
\item[(1)] $AC \subset BD$;
\item[(2)] $A \in BD$ and $\ortr{C}{D}{B}$ is a counterclockwise triangle or $C \in BD$ and $\ortr{A}{B}{D}$ is a counterclockwise triangle;
\item[(3)] $\dir{ABCD}$ is a counterclockwise quadrilateral.
\end{enumerate}
\end{theorem}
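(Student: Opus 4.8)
The plan is to reformulate properness as a combination of orientation and betweenness conditions and then to split into cases according to how many of the four points $A,B,C,D$ are collinear. By \cref{def:segment_defines_f}, the equalities $f_{\dir{AB}}(C)=f_{\dir{AB}}(D)=f_{\dir{CD}}(A)=f_{\dir{CD}}(B)=1$ unfold as follows: for a point lying off the relevant line the condition is an orientation (``counterclockwise triangle'') condition, while for a point lying on the relevant line it is the metric condition that the point is closer to the first endpoint of the oriented segment than to the second. I would first record this dictionary and note that the whole properness condition is invariant under the swap $(A,B)\leftrightarrow(C,D)$, which halves the degenerate cases to be checked.

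First I would treat the case in which no three of $A,B,C,D$ are collinear. Then all four points lie off both lines $\ell(AB)$ and $\ell(CD)$, so properness is equivalent to the four orientation statements that $\ortr{A}{B}{C}$, $\ortr{A}{B}{D}$, $\ortr{C}{D}{A}$, $\ortr{C}{D}{B}$ are all counterclockwise. Using invariance of orientation under cyclic rotation of the vertices (so $\ortr{A}{B}{D}$ agrees with $\ortr{D}{A}{B}$, and $\ortr{C}{D}{B}$ with $\ortr{B}{C}{D}$), these four statements are exactly the assertion that each of the consecutive triples $\ortr{A}{B}{C}$, $\ortr{B}{C}{D}$, $\ortr{C}{D}{A}$, $\ortr{D}{A}{B}$ is counterclockwise. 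Since four points, no three collinear, are the vertices of a counterclockwise convex quadrilateral in the order listed precisely when all four of their consecutive triples are counterclockwise, this case yields exactly alternative~(3).

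Next I would handle the degenerate cases, which is where primality of the segments does the real work and which I expect to be the main obstacle. If all four points are collinear, primality forbids either endpoint of one segment from lying strictly inside the other, so $AB$ and $CD$ must be disjoint; enumerating the possible orders of the four points along the line and applying the ``closer endpoint'' clause leaves exactly the orders realising $AC\subset BD$ as the proper ones, giving alternative~(1). If exactly three points are collinear, I would first use primality together with the ``closer endpoint'' clause to discard the two configurations whose collinear triple is $\{A,B,C\}$ or $\{A,C,D\}$ (there the forced position of the off-line point makes one of the required triangles clockwise), and to show that in the remaining two configurations the collinear point is the middle one, i.e.\ $A\in BD$ (respectively $C\in BD$). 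A short coordinate computation then shows that, given $A\in BD$, the three remaining orientation conditions reduce to the single requirement that $\ortr{C}{D}{B}$ be counterclockwise, the other two following automatically; the symmetric computation handles $C\in BD$. Together these give alternative~(2).

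Finally, for the converse I would verify directly that each of (1), (2), (3) forces all four defining equalities, a routine check in each case using the same dictionary: for (3) one reads off the four counterclockwise triples from the convex quadrilateral, while for (1) and (2) one uses the betweenness hypotheses to supply the metric inequalities and the stated orientation to supply the remaining signs. The only delicate point here is again to confirm that the hypotheses of (1) and (2) are compatible with primality, so that the configurations they describe actually arise.
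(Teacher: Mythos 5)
This theorem is imported from \cite{Part1} and the present paper gives no proof of it, so there is no in-paper argument to compare yours against; I can only judge the proposal on its own merits, and on those it is sound. Your strategy — unfold the four equalities $f_{\dir{CD}}(A)=f_{\dir{CD}}(B)=f_{\dir{AB}}(C)=f_{\dir{AB}}(D)=1$ via \cref{def:segment_defines_f} into orientation conditions off the lines and ``closer endpoint'' conditions on them, then split by collinearity pattern — is the natural route, and the steps you identify as the crux do check out. In the generic case the four conditions are, after cyclic rotation, exactly the four consecutive triples of $\dir{ABCD}$ being counterclockwise, which is the standard criterion for alternative (3). For three collinear points: if $A,B,C$ are collinear, primality and $d(A,C)<d(B,C)$ force the order $C,A,B$, and then $\ortr{A}{B}{D}$ and $\ortr{C}{D}{A}$ counterclockwise place $D$ on opposite sides of $\ell(AB)$, a contradiction; this (and the $(A,B)\leftrightarrow(C,D)$ symmetry) eliminates the triples $\{A,B,C\}$ and $\{A,C,D\}$ as you claim. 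For the triple $\{A,B,D\}$, a direct computation with $D,A,B$ in that order on the line confirms that $\ortr{A}{B}{C}$, $\ortr{C}{D}{A}$, $\ortr{C}{D}{B}$ counterclockwise are all equivalent to $C$ lying on one fixed side of $\ell(ABD)$, so the single condition in alternative (2) suffices. The all-collinear case reduces to the order $B,A,C,D$, i.e.\ $AC\subset BD$. The only loose ends are degenerate coincidences such as $A=C$, which the distance clauses do not by themselves exclude and which must be absorbed into alternative (1) as a degenerate containment; this deserves a sentence in a full write-up but is not a flaw in the method.
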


Let $p(n,m)$ denote the number of proper pairs of segments, which are in convex position.
Then we have the following
\begin{claim}
\label{cl:q_p}
\begin{equation}\label{eq:AllSuperbToSqSuperb}
		q(m,n) = p(m,n) + O\big(m^3n^3(m+n)\big).
\end{equation}	
\end{claim}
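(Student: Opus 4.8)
The plan is to read off the answer directly from \cref{th:superb_rectangle}, which partitions the proper pairs into the three cases (1)--(3). In case (3) the quadrilateral $\dir{ABCD}$ is convex with $AB$ and $CD$ as opposite sides, so such a pair is precisely a pair in convex position; conversely, any proper pair in convex position must fall under case (3), because in cases (1) and (2) at least three of the points $A,B,C,D$ are collinear and hence cannot be the vertices of a (nondegenerate) convex quadrilateral having $AB$ and $CD$ as opposite sides. Consequently $p(m,n)$ counts exactly the proper pairs of case (3), and every proper pair that is \emph{not} in convex position satisfies case (1) or case (2). Since trivially $p(m,n) \le q(m,n)$, it suffices to show that the number of proper pairs falling under cases (1) and (2) is $O\big(m^3n^3(m+n)\big)$, and (\ref{eq:AllSuperbToSqSuperb}) follows.

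For the rough counting I will use two elementary facts. First, the number of oriented prime segments in $\gridMN$ is $O(m^2n^2)$, since it is at most the number of ordered pairs of grid points, $mn(mn-1)$. Second, any line meets $\gridMN$ in at most $\max(m,n) = O(m+n)$ integer points, so it carries at most $O(m+n)$ prime segments and offers $O(m+n)$ positions for a further collinear point.

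In case (1), $AC \subset BD$, all four points lie on the line $\ell(AB)$, so I will choose the prime segment $\dir{AB}$ in $O(m^2n^2)$ ways and then the prime segment $CD$ among the $O(m+n)$ prime segments lying on $\ell(AB)$; this gives $O\big(m^2n^2(m+n)\big)$ pairs. In case (2) exactly three points are collinear, and by symmetry I treat the branch $A \in BD$ with $\ortr{C}{D}{B}$ counterclockwise, so that $A,B,D$ lie on $\ell(AB)$ with $A$ between $B$ and $D$. I will choose the oriented prime segment $\dir{AB}$ in $O(m^2n^2)$ ways, then the point $D$ among the $O(m+n)$ integer points of $\ell(AB)$, and finally the point $C$ as an arbitrary grid point in $O(mn)$ ways (ignoring the primality of $CD$ and the orientation constraint only enlarges the count). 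This yields $O\big(m^2n^2\cdot(m+n)\cdot mn\big) = O\big(m^3n^3(m+n)\big)$ pairs, and the symmetric branch $C \in BD$ with $\ortr{A}{B}{D}$ counterclockwise obeys the same bound. Summing the contributions of the two cases gives $q(m,n) - p(m,n) = O\big(m^3n^3(m+n)\big)$.

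The only genuinely delicate point is the geometric bookkeeping at the start: correctly matching case (3) with convex position and checking that the degenerate cases (1) and (2) are disjoint from it, so that the subtraction $q(m,n)-p(m,n)$ really captures only collinear configurations. Once the degenerate cases are isolated, the estimate is a routine product of the two elementary bounds above, with the dominant term coming from case (2), where one prime segment ranges freely over all $O(m^2n^2)$ prime segments while its partner is pinned to within $O(m+n)$ of a fixed line.
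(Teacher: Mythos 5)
Your proposal is correct and takes essentially the same route as the paper: both use \cref{th:superb_rectangle} to identify the proper pairs not in convex position with the degenerate cases involving three (or four) collinear points among $A,B,C,D$, and both then bound these by the same product count $O(m^2n^2)\cdot O(m+n)\cdot O(mn)$ (two points freely, the third pinned to their line, the fourth arbitrary). The only cosmetic difference is that the paper organizes the degenerate pairs by whether $\Conv(\{A,B,C,D\})$ is a segment or a triangle, whereas you go directly through cases (1) and (2) of the structure theorem.
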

\begin{proof}
We will show that the number of \superb pairs of segments $\{\dir{AB},\dir{CD}\}$ in
	$\gridMN$ such that $\Conv(\{A,B,C,D\})$ is a segment or triangle is $O(m^3n^3(m+n))$.
	If $\Conv(\{A,B,C,D\})$ is a segment, then all of the four points $A,B,C,$ and $D$ belong to the same line. 
	If $\Conv(\{A,B,C,D\})$ is a triangle, then, by \cref{th:superb_rectangle}, three of the points belong to the same line.
	In both cases there are three collinear points, say $A,B,C$. There are $O(m^2n^2)$ ways to choose two of these three points.
	Given two fixed points, there are at most $\max\{m-2,n-2\} = O(m+n)$ ways to choose the third one. 
	For the fourth point, whether it lies on the same line with $A,B,C$ or not, there are $O(mn)$ choices.
	Hence, altogether there are $O(m^3n^3(m+n))$ \superb pairs of segments $\{\dir{AB},\dir{CD}\}$ in $\gridMN$ such that 
	$\Conv(\{A,B,C,D\})$ is a segment or triangle, which implies (\ref{eq:AllSuperbToSqSuperb}).
\end{proof}

\cref{th:t_2=p} and \cref{cl:q_p} reduce the estimation of the number of $2$-threshold functions to that of the number of proper pairs of segments in convex position.
Further, we will show that it suffices to consider non-oriented prime segments in convex position.
The following claim is a convenient necessary and sufficient condition for a pair of segments to be in convex position.

\begin{claim}
\label{cl:proper_criteria}
Segments $AB$ and $CD$ are in convex position if and only if
\begin{equation}\label{eq:proper_pair}
\begin{cases}
\ell(AB) \cap CD = \emptyset,\\
\ell(CD) \cap AB = \emptyset.
\end{cases}
\end{equation}
\end{claim}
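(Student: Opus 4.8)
The plan is to prove both implications by reading each equation of (\ref{eq:proper_pair}) as a ``same side'' condition and then invoking the classification of four points in general position. Note that $\ell(AB) \cap CD = \emptyset$ says precisely that $C$ and $D$ lie strictly on one and the same side of the line $\ell(AB)$, and symmetrically $\ell(CD) \cap AB = \emptyset$ says that $A$ and $B$ lie strictly on one and the same side of $\ell(CD)$. I will use these reformulations throughout.

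For the forward implication, suppose $AB$ and $CD$ are opposite sides of a convex quadrilateral $Q$. Since $Q$ is a convex polygon with four distinct vertices, no three of $A,B,C,D$ are collinear; in particular $C,D \notin \ell(AB)$ and $A,B \notin \ell(CD)$. As $AB$ is an edge of $Q$, the line $\ell(AB)$ supports $Q$, so all of $Q$—and hence the opposite edge $CD$—lies in one closed halfplane bounded by $\ell(AB)$; combined with $C,D \notin \ell(AB)$ this yields $\ell(AB) \cap CD = \emptyset$. The same argument with the two edges interchanged gives $\ell(CD) \cap AB = \emptyset$, which is (\ref{eq:proper_pair}).

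For the converse, I first observe that (\ref{eq:proper_pair}) forces the four points into general position, i.e. no three are collinear: if, say, $C \in \ell(AB)$, then $C \in \ell(AB) \cap CD$, contradicting the first equation, and the other collinearities are excluded in the same way. Four points in general position either are the vertices of a convex quadrilateral or else one of them lies inside the triangle spanned by the other three. I will rule out the latter alternative case by case: in each situation the line through the ``inner'' point and a suitable vertex must cross the opposite segment. For instance, if $D$ lies inside triangle $ABC$, then the line $\ell(CD)$ leaves the vertex $C$, passes through the interior point $D$, and hence meets the opposite side $AB$, contradicting the second equation; the case of $C$ inside $ABD$ violates the second equation likewise, while $A$ or $B$ inside the respective triangle makes $\ell(AB)$ cross $CD$ and so violates the first equation. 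Thus $A,B,C,D$ are the vertices of a convex quadrilateral.

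It then remains to check that $AB$ and $CD$ are opposite, rather than adjacent or diagonal, sides. Up to rotation and reflection there are only three cyclic orders of the four vertices, namely $A,B,C,D$ and $A,B,D,C$—in both of which $AB$ and $CD$ come out as opposite edges—and $A,C,B,D$, in which $AB$ and $CD$ are the two diagonals and therefore cross at an interior point $P$; but then $P \in \ell(AB) \cap CD$, again contradicting (\ref{eq:proper_pair}). Hence $AB$ and $CD$ are opposite sides, i.e. the segments are in convex position. I expect the main obstacle to be the bookkeeping in the backward direction: verifying that every degenerate and every non-convex configuration is genuinely excluded, and correctly matching each bad configuration to the particular equation of (\ref{eq:proper_pair}) that it violates.
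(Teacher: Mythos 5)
Your proof is correct and follows essentially the same route as the paper's: the forward direction via supporting lines of the quadrilateral, and the converse by first showing $\Conv(\{A,B,C,D\})$ must be a quadrilateral with all four points as vertices and then excluding the diagonal configuration via the crossing point. You simply spell out in full the case analysis (collinear triples, a point interior to the triangle of the others, the cyclic orders) that the paper compresses into two sentences.
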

\begin{proof}
Clearly, if $AB$ and $CD$ are in convex position, then (\ref{eq:proper_pair}) holds.
To prove the converse, we observe that (\ref{eq:proper_pair}) implies that $\Conv(\{A,B,C,D\})$ is not a segment or triangle, hence it is a convex quadrilateral with vertices $A,B,C,$ and $D$.
Moreover, $AB \cap CD = \emptyset$, and hence the segments are neither diagonals nor adjacent edges, and consequently they are opposite edges of the quadrilateral $\Conv(\{A,B,C,D\})$.
\end{proof}


The relation between \superb pairs of segments in convex position and pairs of non-oriented prime segments in convex position is revealed in the following theorem.

\begin{figure}
\begin{subfigure}[b]{0.45\textwidth}
\begin{tikzpicture}[scale=1.5]
	\foreach \x in {0,...,3}
    	\foreach \y in {0,...,2}
    	{
    		\node[grid_point] (\x\y) at (\x,\y) {};
    	}

    	
    	\node[false_point] (D) at (1,2) {};   
    	\node[true_point] (A) at (1,1) {};   
    	\node[false_point] (B) at (2,0) {};   
    	\node[true_point] (C) at (3,1) {};   
    	
    	\draw[oriented_segment] (C) -- (D);
    	\draw[oriented_segment] (A) -- (B);

 		\node[small_text, black, left] at (1,1) {$A$};
 		\node[small_text, black, below right] at (2,0) {$B$};
 		\node[small_text, black, below right] at (3,1) {$C$};
 		\node[small_text, black, left] at (1,2) {$D$};
\end{tikzpicture}
\end{subfigure}\hfill
\begin{subfigure}[b]{0.45\textwidth}
\begin{tikzpicture}[scale=1.5]
	\foreach \x in {0,...,3}
    	\foreach \y in {0,...,2}
    	{
    		\node[grid_point] (\x\y) at (\x,\y) {};
    	}

    	\node[false_point] (D) at (1,2) {};   
    	\node[false_point] (A) at (1,1) {};   
    	\node[false_point] (B) at (2,0) {};   
    	\node[false_point] (C) at (3,1) {};   
    	
    	\draw[black] (D) -- (C);
    	\draw[black] (A) -- (B);

 		\node[small_text, black, left] at (A) {$A$};
 		\node[small_text, black, below right] at (B) {$B$};
 		\node[small_text, black, below right] at (C) {$C$};
 		\node[small_text, black, left] at (D) {$D$};
\end{tikzpicture}
\end{subfigure}
\caption{The proper pair of segments $\{\protect\dir{AB},\protect\dir{CD}\}$ in convex position and the corresponding pair of prime segments $AB, CD$ in convex position.}
\label{fig:convex_position}
\end{figure}
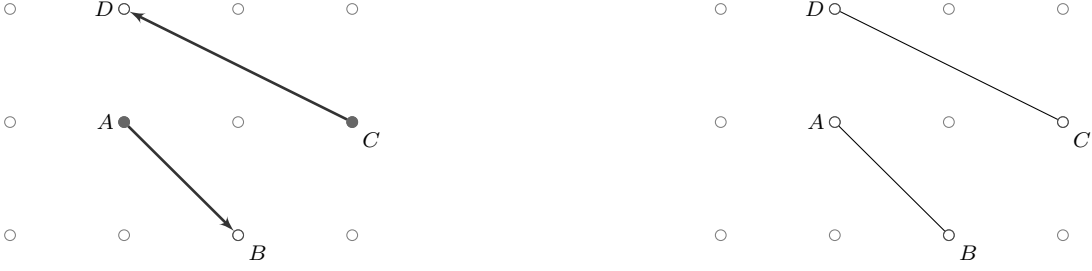

\begin{theorem}\label{prop:superbToProperPairs}
There is one-to-one correspondence between pairs of \linebreak (non-oriented) prime segments in convex position and \superb pairs of segments in convex position.
\end{theorem}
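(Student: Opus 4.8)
The plan is to prove the statement by showing that the map which simply \emph{forgets the orientations} of a proper pair in convex position is a bijection onto pairs of non-oriented prime segments in convex position. First I would check that this map is well defined: by definition a proper pair $\{\dir{AB},\dir{CD}\}$ consists of prime segments, and ``in convex position'' for such a pair means precisely that the underlying non-oriented segments $AB$ and $CD$ are opposite sides of a convex quadrilateral; hence forgetting the orientations produces a pair of non-oriented prime segments in convex position, landing in the target set.

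The core of the argument is to show that every pair of non-oriented prime segments $\{AB,CD\}$ in convex position admits \emph{exactly one} orientation making $\{\dir{AB},\dir{CD}\}$ proper. Since $AB$ and $CD$ are opposite sides of a convex quadrilateral, the four points $A,B,C,D$ are its vertices and, in particular, no three of them are collinear. Consequently, in \cref{th:superb_rectangle} the alternatives (1) and (2) cannot hold for \emph{any} orientation of the two segments, because both force three of the four points onto a common line. Therefore an orientation yields a proper pair if and only if it satisfies alternative (3), i.e.\ $\dir{ABCD}$ is a counterclockwise quadrilateral having $AB$ and $CD$ as opposite edges.

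It then remains to verify that precisely one of the four possible orientations realizes alternative (3). For existence, I would orient the boundary of $\Conv(\{A,B,C,D\})$ counterclockwise; this traversal induces a well-defined direction on each of the two opposite sides, producing a pair $\{\dir{AB},\dir{CD}\}$ for which $\dir{ABCD}$ is counterclockwise, hence proper by \cref{th:superb_rectangle}. For uniqueness, any proper orientation must, by the previous paragraph, make $\dir{ABCD}$ a counterclockwise quadrilateral with these two segments as opposite edges; since the counterclockwise boundary orientation of a fixed convex quadrilateral is unique, it forces both segments to be oriented exactly as above. Thus the preimage of each non-oriented pair is a single proper pair, and the forgetting map is a bijection, as required.

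The step I expect to be the main obstacle is this final existence-and-uniqueness count, where one must rule out the three ``wrong'' orientations cleanly. Reversing exactly one of the two segments turns $\dir{ABCD}$ into a self-intersecting (bowtie) quadrilateral, so it is no longer the convex quadrilateral demanded by alternative (3), while reversing both segments yields a clockwise quadrilateral; in either case \cref{th:superb_rectangle}(3) fails. Carrying out this case analysis by appealing to the uniqueness of the counterclockwise boundary orientation, rather than by brute-force coordinates, is the key point to get right.
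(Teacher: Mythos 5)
Your proof is correct and follows essentially the same route as the paper: both establish the orientation-forgetting correspondence and invoke \cref{th:superb_rectangle} to show that, among the possible orientations of a non-oriented pair in convex position, exactly the one induced by the counterclockwise boundary traversal of $\Conv(\{A,B,C,D\})$ is proper. You are somewhat more explicit than the paper in ruling out alternatives (1)--(2) via non-collinearity and in dismissing the two ``bowtie'' orientations, but the underlying argument is the same.
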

\begin{proof}
To prove the claim, we establish a bijective mapping between the two sets of pairs of segments.
Clearly, if $\{\dir{AB},\dir{CD}\}$ is a \superb pair of segments in convex position, then $AB$ and $CD$ are prime and in convex position. 

Now, let $AB$ and $CD$ be prime segments in convex position, then \linebreak $\Conv(\{A,B,C,D\})$ is a quadrilateral, and $AB$ and $CD$ are two of its four edges.
Assume, without loss of generality, that
$\Conv(\{A,B,C,D\})$ has also the edges $CB$ and $DA$ (see \cref{fig:convex_position}).
To apply \cref{th:superb_rectangle} we are to consider both oriented quadrilaterals corresponding to $\Conv(\{A,B,C,D\})$, namely, $\dir{ABCD}$ and $\dir{DCBA}$. 
These quadrilaterals have opposite orientations, we assume, without loss of generality, that $\dir{ABCD}$ is the counterclockwise one. 
Then from \cref{th:superb_rectangle} it follows that $\{\dir{AB},\dir{CD}\}$ is a unique proper pair of segments in convex position corresponding to $AB, CD$.

\end{proof}


Because of the bijection established in \cref{prop:superbToProperPairs}, $p(m,n)$ denotes both the number of \superb pairs of segments in convex position and the number of pairs of (non-oriented) prime segments in convex position.
By \cref{th:t_2=p}, \cref{cl:q_p}, and \cref{prop:superbToProperPairs}, we conclude that the number of $2$-threshold functions can be expressed via the number of pairs of prime segments in convex position:

\begin{corollary}
\label{cor:t_2-p}
$$
t_2(m,n) = p(m,n) + o(m^4n^4).
$$
\end{corollary}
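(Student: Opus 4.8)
The plan is to obtain the corollary by simply chaining the two asymptotic identities already established, namely \cref{th:t_2=p} and \cref{cl:q_p}, and then checking that the accumulated error terms are negligible compared to $m^4n^4$.

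First I would substitute the estimate $q(m,n) = p(m,n) + O(m^3n^3(m+n))$ from \cref{cl:q_p} into the identity $t_2(m,n) = q(m,n) + O(m^2n^2(m+n)^2)$ from \cref{th:t_2=p}. This yields
\[
t_2(m,n) = p(m,n) + O\big(m^3n^3(m+n)\big) + O\big(m^2n^2(m+n)^2\big),
\]
where $p(m,n)$, by the bijection of \cref{prop:superbToProperPairs}, simultaneously counts the \superb pairs of segments in convex position and the pairs of (non-oriented) prime segments in convex position.

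It then remains to verify that both error terms are $o(m^4n^4)$. Expanding, $m^3n^3(m+n)=m^4n^3+m^3n^4$ and $m^2n^2(m+n)^2=m^4n^2+2m^3n^3+m^2n^4$; dividing each monomial by $m^4n^4$ gives factors $1/n$, $1/m$, $1/n^2$, $1/(mn)$, and $1/m^2$, each of which tends to $0$. Hence the sum of the two error terms is $o(m^4n^4)$, which is exactly the claimed formula.

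The only point that requires a moment of care — though it is not a genuine obstacle — is the asymptotic regime: the surviving dominant error monomials are $m^4n^2$ and $m^2n^4$, which are $o(m^4n^4)$ precisely when $n\to\infty$ and $m\to\infty$, respectively. So the statement is understood under the convention that both grid dimensions grow without bound, consistent with \cref{th:t_k}. Under this convention no individual error monomial survives against $m^4n^4$, and the corollary follows immediately. The substantive content of the asymptotics therefore lies entirely in estimating $p(m,n)$ itself, which is carried out in the next section.
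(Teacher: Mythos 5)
Your proposal is correct and follows exactly the paper's route: the corollary is obtained by chaining \cref{th:t_2=p} with \cref{cl:q_p} (and invoking the bijection of \cref{prop:superbToProperPairs} to identify the two counts denoted $p(m,n)$), then observing that the error terms $O(m^3n^3(m+n))$ and $O(m^2n^2(m+n)^2)$ are $o(m^4n^4)$ when both $m$ and $n$ grow. Your explicit remark about the asymptotic regime is a sensible clarification but does not change the argument.
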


As we will show in \cref{th:s_general_formula}, the number of pairs of prime segments in convex position is $\Theta(m^4n^4)$, and hence, can be used to derive an asymptotic formula for the number of $2$-threshold functions.

\section{The number of pairs of prime segments in convex position}
\label{sec:numberOfProperPairs}
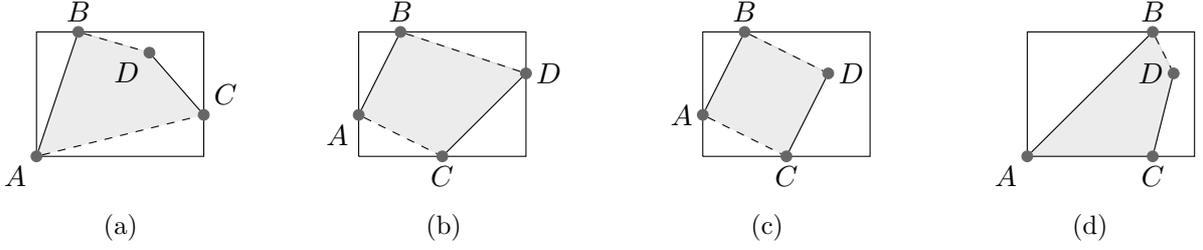
\begin{figure}
\begin{subfigure}[t]{0.25\textwidth}
\centering
\begin{tikzpicture}[scale=0.55]
	
	\draw[light_grey,fill=light_grey!50] (0,0) -- (1,3) -- (2.7,2.5) -- (4,1) ;
	\draw[black] (0,0) -- (4,0) -- (4,3) -- (0,3) -- (0,0);	
	\node[true_point] (A) at (0,0) {};
	\node[true_point] (B) at (1,3) {};
	\node[true_point] (C) at (4,1) {};
	\node[true_point] (D) at (2.7,2.5) {};
	\draw[black] (A) -- (B);
	\draw[black] (C) -- (D);
	\draw (A) node [below left] {$A$};
	\draw (B) node [above] {$B$};
	\draw (C) node [above right] {$C$};
	\draw (D) node [below left] {$D$};
	\draw[dashed] (A) -- (C);
	\draw[dashed] (B) -- (D);
	
\end{tikzpicture}
\caption{}
\label{fig:inscribed-1}
\end{subfigure}\hfill\hfill
\begin{subfigure}[t]{0.25\textwidth}
\centering
\begin{tikzpicture}[scale=0.55]
	
	\draw[light_grey,fill=light_grey!50] (0,1) -- (1,3) -- (4,2) -- (2,0);
	\draw[black] (0,0) -- (4,0) -- (4,3) -- (0,3) -- (0,0);	
	\node[true_point] (A) at (0,1) {};
	\node[true_point] (B) at (1,3) {};
	\node[true_point] (C) at (2,0) {};
	\node[true_point] (D) at (4,2) {};
	\draw[black] (A) -- (B);
	\draw[black] (C) -- (D);
	\draw (A) node [below left] {$A$};
	\draw (B) node [above] {$B$};
	\draw (C) node [below] {$C$};
	\draw (D) node [right] {$D$};
	\draw[dashed] (A) -- (C);
	\draw[dashed] (B) -- (D);
	
\end{tikzpicture}
\caption{}
\label{fig:inscribed-2}
\end{subfigure}\hfill\hfill
\begin{subfigure}[t]{0.25\textwidth}
\centering
\begin{tikzpicture}[scale=0.55]
	
	\draw[light_grey,fill=light_grey!50] (0,1) -- (1,3) -- (3,2) -- (2,0) ;
	\draw[black] (0,0) -- (4,0) -- (4,3) -- (0,3) -- (0,0);	
	\node[true_point] (A) at (0,1) {};
	\node[true_point] (B) at (1,3) {};
	\node[true_point] (C) at (2,0) {};
	\node[true_point] (D) at (3,2) {};
	\draw[black] (A) -- (B);
	\draw[black] (C) -- (D);
	\draw (A) node [left] {$A$};
	\draw (B) node [above] {$B$};
	\draw (C) node [below] {$C$};
	\draw (D) node [right] {$D$};
	\draw[dashed] (A) -- (C);
	\draw[dashed] (B) -- (D);
	
\end{tikzpicture}
\caption{}
\label{fig:inscribed-3}
\end{subfigure}\hfill\hfill
\begin{subfigure}[t]{0.25\textwidth}
\centering
\begin{tikzpicture}[scale=0.55]
	
	\draw[light_grey,fill=light_grey!50] (0,0) -- (3,3) -- (3.5,2) -- (3,0);
	\draw[black] (0,0) -- (4,0) -- (4,3) -- (0,3) -- (0,0);	
	\node[true_point] (A) at (0,0) {};
	\node[true_point] (B) at (3,3) {};
	\node[true_point] (C) at (3,0) {};
	\node[true_point] (D) at (3.5,2) {};
	\draw[black] (A) -- (B);
	\draw[black] (C) -- (D);
	\draw (A) node [below left] {$A$};
	\draw (B) node [above] {$B$};
	\draw (C) node [below] {$C$};
	\draw (D) node [left] {$D$};
	\draw[dashed] (B) -- (D);
	
\end{tikzpicture}
\caption{}
\label{fig:inscribed-4}
\end{subfigure}\hfill\hfill
\caption{$\{AB, CD\}$ is a pair of segments in convex position.
The grey shape is $\protect\Conv(\{A,B,C,D\})$. 
The rectangle is circumscribed about $\protect\Conv(\{A,B,C,D\})$ in (a) and (b) and not circumscribed in (c) and (d).}
\label{fig:inscribed}
\end{figure}


In what follows we will extensively use rectangles with horizontal and vertical sides  circumscribed about the convex quadrilaterals being the convex hulls of pairs of segments in convex position (see \cref{fig:inscribed}).

Denote by $\rMN$ a $\m \times \n$ rectangle $\Conv(\grid_{\m+1,\n+1})$ for natural numbers $u$ and $v$.
Denote by $Z(u,v)$ the set of pairs of prime segments $\{AB, CD\}$ in convex position such that $\rMN$ is circumscribed about $\Conv(\{A,B,C,D\})$.

\begin{theorem}
	\begin{equation}\label{eq:total_1}
	p(m,n) = \sum_{\m=1}^m \sum_{\n=1}^n (m-\m)(n-\n)|Z(\m,\n)|.
	\end{equation}
\end{theorem}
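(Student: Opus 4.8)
The plan is to classify every pair of prime segments in convex position in $\gridMN$ according to the axis-aligned rectangle circumscribed about its convex hull, and then to count, for each admissible rectangle size, the number of positions the configuration may occupy inside the grid. The formula then falls out as a bookkeeping of integer translates.

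First I would record the elementary but crucial fact that for a pair of prime segments $\{AB,CD\}$ in convex position the set $Q=\Conv(\{A,B,C,D\})$ is a convex quadrilateral whose unique axis-aligned circumscribed rectangle is its bounding box $[x_{\min},x_{\max}]\times[y_{\min},y_{\max}]$. Indeed, an axis-aligned rectangle containing $Q$ is circumscribed about $Q$ (in the sense of \cref{sec:preliminaries}) exactly when each of its four sides lies on a supporting line of $Q$ that meets $Q$, i.e. when the rectangle is precisely the bounding box: if a side were strictly outside an extreme of $Q$ its line would miss $Q$, and if it were strictly inside $Q$ would not be contained, so tangency pins the side to the corresponding extreme coordinate, and the nonempty-intersection condition is then automatic. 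Since $A,B,C,D$ are integer points, the numbers $x_{\min},x_{\max},y_{\min},y_{\max}$ are integers, so the bounding box has integer side lengths $\m=x_{\max}-x_{\min}$ and $\n=y_{\max}-y_{\min}$; moreover $\m,\n\ge 1$, since a genuine quadrilateral cannot have all four vertices sharing an $x$- or a $y$-coordinate. After translating by $(-x_{\min},-y_{\min})$ the bounding box becomes $[0,\m]\times[0,\n]$, which is exactly $\rMN$.

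Next I would set up the bijection. For fixed $\m,\n$, translation by an integer vector sends integer points to integer points and hence preserves primeness of segments; being an isometry it preserves convex position and the circumscribed property. Consequently the pairs $\{AB,CD\}$ in convex position in $\gridMN$ whose bounding box has side lengths exactly $\m$ and $\n$ and lower-left corner at a fixed integer point $(a,b)$ are, via translation by $(-a,-b)$, in one-to-one correspondence with the pairs counted by $|Z(\m,\n)|$, namely those whose bounding box is $[0,\m]\times[0,\n]=\rMN$. Thus each admissible position $(a,b)$ contributes precisely $|Z(\m,\n)|$ pairs.

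It then remains to count the admissible positions. A pair whose bounding box has side lengths $\m,\n$ and lower-left corner $(a,b)$ lies inside $\gridMN=\{0,\dots,m-1\}\times\{0,\dots,n-1\}$ if and only if $0\le a\le m-1-\m$ and $0\le b\le n-1-\n$, since the extreme vertices sit on the box boundary and so the pair fits exactly when the box does; this gives $m-\m$ choices for $a$ and $n-\n$ choices for $b$. Summing the contributions $(m-\m)(n-\n)|Z(\m,\n)|$ over all admissible $\m,\n$ yields \eqref{eq:total_1}; the terms with $\m=m$ or $\n=n$ may be included harmlessly because the factor $(m-\m)(n-\n)$ vanishes, while no term with $\m=0$ or $\n=0$ occurs by the second paragraph. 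The only points requiring genuine care, and hence the main (though modest) obstacle, are the identification of the circumscribed rectangle with the bounding box and the verification that integer translation simultaneously preserves primeness, convex position, and the circumscribed property; once these are established the enumeration is immediate.
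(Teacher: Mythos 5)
Your proof is correct and follows essentially the same route as the paper's: identify the unique axis-aligned circumscribed rectangle of each pair with its bounding box, translate to normalize it to $\rMN$, and count the $(m-\m)(n-\n)$ integer positions of an $\m\times\n$ rectangle inside the grid. The paper states this in two sentences; you have simply supplied the routine verifications (integrality of the box, preservation of primeness and convex position under integer translation) that the paper leaves implicit.
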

\begin{proof}
	First for every convex quadrilateral with vertices in $\gridMN$ there exists a unique rectangle with sides parallel to the sides of $\Conv(\gridMN)$ circumscribed about it.
	Hence, the statement follows from the fact that there are exactly $(m-\m)(n-\n)$ rectangles in $\gridMN$ 
	with sides of length $\m$ and $\n$ that are parallel to the sides of $\Conv(\gridMN)$.
\end{proof}

Let $Z_i(\m,\n) \subseteq Z(\m,\n)$ be the set of those pairs of segments $AB$, $CD$ in $Z(\m,\n)$, for which exactly $i$ points in $\{A,B,C,D\}$ are vertices of $\rMN$. 
Clearly, $Z(\m,\n)$ is the disjoint union of $Z_i(\m,\n), i = 0,1, \ldots, 4$, and therefore
\begin{equation}\label{eq:sumZ}
	|Z(\m,\n)| = \sum_{i=0}^{4}|Z_i(\m,\n)|.
\end{equation}

Our next step is to estimate the cardinality of $Z_i(\m,\n)$ for every $i \in \{0,1, \ldots, 4\}$.
The cases $i \in \{4, 3\}$ are easy and we consider them below. The cases $i \in \{ 0,1,2 \}$ are more involved
and we treat them independently in Sections \ref{sec:2points}--\ref{sec:0points}.

\begin{lemma}\label{lem:Z34}
	$|Z_3(\m,\n)| + |Z_4(\m,\n)| = O(\m\n).$
\end{lemma}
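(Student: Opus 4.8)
The plan is to prove both bounds by direct counting, exploiting the single most important feature of the ambient figure: the rectangle $\rMN$ has only four corners. I would partition each set $Z_i(\m,\n)$ according to \emph{which} of the four corners are used and \emph{where} the remaining (non-corner) point sits, and observe that for $i \in \{3,4\}$ there is at most one free point to place.

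First I would dispose of $Z_4(\m,\n)$. If all four of $A,B,C,D$ are vertices of $\rMN$, then $\{A,B,C,D\}$ is exactly the set of four corners of the rectangle, so $\Conv(\{A,B,C,D\}) = \rMN$ and the two segments $AB$, $CD$ must be opposite sides of the rectangle (recall that segments in convex position are opposite edges of a convex quadrilateral, never diagonals). There are only two ways to split the four corners into an unordered pair of opposite sides (the two horizontal sides or the two vertical sides), so $|Z_4(\m,\n)| = O(1)$, which is certainly $O(\m\n)$.

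Next I would bound $Z_3(\m,\n)$ by embedding it into a slightly larger, easily counted family. For a configuration in $Z_3(\m,\n)$, exactly three of the four points are corners: there are $\binom{4}{3} = 4$ choices of which three corners are used, and only a constant number of ways to decide how the four points group into the pair $\{AB, CD\}$. The fourth point is then an integer point of $\rMN$ (i.e. a point of $\mathcal{G}_{\m+1,\n+1}$) that is not a corner, and there are at most $(\m+1)(\n+1) = O(\m\n)$ such points. Since every element of $Z_3(\m,\n)$ arises from some such choice, we get $|Z_3(\m,\n)| = O(1) \cdot O(\m\n) = O(\m\n)$, and adding the two estimates yields $|Z_3(\m,\n)| + |Z_4(\m,\n)| = O(\m\n)$ as required.

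Because we only seek an upper bound, I would deliberately \emph{not} verify that each candidate configuration is genuinely circumscribed, in convex position, or built from prime segments: it is enough that $Z_3(\m,\n)$ is contained in the family ``three fixed corners plus one free integer point,'' whose cardinality is immediate. The only step needing any care is the trivial bookkeeping of the constant factor, namely checking that, once the three corners are fixed, the assignment of the four points to the two labelled segments contributes only an $O(1)$ multiplier, so that the $O(\m\n)$ positions of the single free point dominate. This is the mildest possible obstacle; everything else follows at once from the boundedness of the corner set.
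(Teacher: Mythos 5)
Your argument is correct and essentially identical to the paper's: the paper likewise observes that at least three of the four endpoints must be mapped to the (constantly many) corners of $\rMN$, leaving $O(\m\n)$ placements for the remaining point. Your separate treatment of $Z_4$ and the explicit bookkeeping of the $O(1)$ factor are just a slightly more detailed write-up of the same counting.
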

\begin{proof}
	By definition, for any pair of segments $\{ AB, CD \} \in Z_3(\m,\n) \cup Z_4(\m,\n)$ at least three of the endpoints
	of the segments are vertices of $\rMN$.
	Therefore, since there is a constant number of ways to map 3 of the endpoints of the segments
	to the vertices of $\rMN$, and there are $O(\m\n)$ ways to place the fourth point in $\grid_{u+1,v+1}$, we conclude the lemma.
\end{proof}

\subsection{The number of pairs of segments with two corner points}
\label{sec:2points}

In this section we estimate $|Z_{2}(\m,\n)|$, i.e. the number of pairs of segments $\{ AB, CD \}$ in $Z(\m,\n)$, for which
exactly $2$ points in $\{A,B,C,D\}$ are vertices of $\rMN$. 

Let $\{X, Y\} = \{A,B,C,D\} \cap \text{Vert}(\rMN)$. 
We consider the partition of $Z_{2}(\m,\n)$ into the following three subsets:
\begin{enumerate}
	\item $Z_{2}^a(\m,\n)$ is the subset of $Z_{2}(\m,\n)$ such that $X$ and $Y$ are adjacent vertices of $\rMN$, i.e. $XY$ is a side of $\rMN$.

	\item $Z_{2}^b(\m,\n)$ is the subset of $Z_{2}(\m,\n)$ such that $X$ and $Y$ are opposite vertices of $\rMN$ and 
	belong to the same segment.

	\item $Z_{2}^c(\m,\n)$ is the subset of $Z_{2}(\m,\n)$ such that $X$ and $Y$ are opposite vertices of $\rMN$ and 
	belong to the different segments.
\end{enumerate}

\noindent
Clearly,
$$
|Z_2(\m,\n)| = |Z_{2}^a(\m,\n)| + |Z_{2}^b(\m,\n)| + |Z_{2}^c(\m,\n)|.
$$
Let us show that the first summand does not affect the asymptotics of the sum which will be proved to be $\Theta(\m^2\n^2)$.

\begin{lemma}\label{lem:Z2a}
$
|Z_{2}^a(\m,\n)| = O(\m^2\n + \m\n^2).
$
\end{lemma}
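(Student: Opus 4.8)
The plan is to bound the number of pairs $\{AB, CD\}$ in $Z_2(\m,\n)$ for which the two corner points $X, Y \in \{A,B,C,D\} \cap \Vertic(\rMN)$ are adjacent vertices of $\rMN$, i.e. $XY$ is a side of $\rMN$. First I would fix the two corner vertices: since $\rMN$ has only four vertices and a constant number of adjacent pairs, there are $O(1)$ ways to choose which side of $\rMN$ the points $X$ and $Y$ occupy, and $O(1)$ ways to decide how $\{X,Y\}$ is distributed among the roles $A,B,C,D$. So up to a constant factor it suffices to count the placements of the two \emph{remaining} points, which I will call $P$ and $Q$, subject to the circumscription constraint that $\rMN$ is circumscribed about $\Conv(\{A,B,C,D\})$.

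**Using the circumscription constraint.** The key observation is that $X$ and $Y$ together already touch \emph{two adjacent} sides of the rectangle at their shared corner, but since $XY$ is a single side, the corner points only guarantee tangency along the sides incident to that side. The circumscription of $\rMN$ requires every side of $\rMN$ to meet $\Conv(\{A,B,C,D\})$; the two sides of $\rMN$ \emph{not} containing the edge $XY$ must therefore be touched by $P$ or $Q$. Concretely, if $XY$ is (say) the bottom side of the $\m \times \n$ rectangle, then the top side must contain one of $P, Q$, forcing that point to have its $y$-coordinate equal to $\n$ (equivalently, to lie on the top edge of $\rMN$): this restricts it to $O(\m)$ choices rather than $O(\m\n)$. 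The other point is then free within the rectangle, giving $O(\m\n)$ choices. Thus for this orientation of $XY$ the count is $O(\m) \cdot O(\m\n) = O(\m^2\n)$.

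**Summing over orientations and concluding.** I would then run the symmetric argument for the case $XY$ is a vertical side of $\rMN$: there the point forced onto the opposite (vertical) side contributes $O(\n)$ placements and the free point contributes $O(\m\n)$, yielding $O(\m\n^2)$. Adding the $O(1)$ many configurations of each type gives the stated bound
\begin{equation*}
|Z_2^a(\m,\n)| = O(\m^2\n + \m\n^2).
\end{equation*}
One should double-check the degenerate possibility that \emph{both} remaining points are needed to touch the two far sides simultaneously; but that case only tightens the count (each far side then pins a coordinate), so it is dominated by the bound above.

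**Main obstacle.** The step I expect to require the most care is justifying precisely \emph{why} a corner-adjacent pair $XY$ forces a coordinate of one of the free points, i.e. verifying that the circumscription condition cannot already be satisfied by $X$ and $Y$ alone on the two far sides. Since $X$ and $Y$ lie on a \emph{single} side of $\rMN$, neither touches the opposite parallel side, so that side's tangency must be supplied by $P$ or $Q$; making this rigorous relies on the definition of "circumscribed" (every edge line is tangent and meets the convex hull) together with the fact that $\{A,B,C,D\}$ are the vertices of the quadrilateral. Once this forcing is established, the remaining counting is routine.
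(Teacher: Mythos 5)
Your proposal is correct and follows essentially the same argument as the paper: the circumscription condition forces one of the two non-corner points onto the side of $\mathcal{R}_{u,v}$ opposite to the side containing the two corner points, giving $O(u+v)$ (split by you into the $O(u)$ and $O(v)$ cases) choices for that point and $O(uv)$ for the remaining free point. The forcing step you flag as the main obstacle is exactly the paper's one-line justification, and your verification of it is sound.
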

\begin{proof}
Since $\rMN$ is circumscribed about $\Conv(AB \cup CD)$ and two of the points $A,B,C,D$ belong to the same side of $\rMN$, at least one of the other two points belongs to the opposite side of $\rMN$.
Therefore there are $O(\m+\n)$ ways to place this point.
Furthermore, there are $O(\m\n)$ ways to place the fourth point in $\rMN$, which implies the desired estimate.
\end{proof}

\begin{lemma}\label{prop:P+-}
Let $AB$ and $CD$ be segments with endpoints in $\gridMN$.
Then $AB$ and $CD$ are in convex position if and only if $A,B,C,$ and $D$ are in general position, the triangle $\ortr{A}{B}{D}$ has the same orientation as $\ortr{A}{B}{C}$, and the triangle $\ortr{C}{D}{A}$ has the same orientation as $\ortr{C}{D}{B}$.
\end{lemma}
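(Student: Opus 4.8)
The plan is to prove both directions by relating ``in convex position'' to sign conditions on the orientations of the four triangles built from the endpoints. Recall from \cref{cl:proper_criteria} that $AB$ and $CD$ are in convex position if and only if $\ell(AB) \cap CD = \emptyset$ and $\ell(CD) \cap AB = \emptyset$. The key observation I would exploit is that the orientation of the triangle $\ortr{A}{B}{X}$ records on which side of the line $\ell(AB)$ the point $X$ lies: two points $C$ and $D$ lie on the same (open) side of $\ell(AB)$ exactly when $\ortr{A}{B}{C}$ and $\ortr{A}{B}{D}$ have the same orientation, and the segment $CD$ crosses $\ell(AB)$ in an interior point precisely when the orientations differ. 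So I would first restate the condition $\ell(AB)\cap CD=\emptyset$ as ``$C$ and $D$ lie strictly on one side of $\ell(AB)$,'' i.e. $\ortr{A}{B}{C}$ and $\ortr{A}{B}{D}$ have the same orientation and both are nondegenerate, and symmetrically for $\ell(CD)\cap AB=\emptyset$.

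First I would dispose of the degeneracy issue by arguing that the four points being in general position is subsumed by, and necessary for, the orientation conditions. If any three of $A,B,C,D$ are collinear, then at least one of the four triangles $\ortr{A}{B}{C}$, $\ortr{A}{B}{D}$, $\ortr{C}{D}{A}$, $\ortr{C}{D}{B}$ is degenerate; a degenerate triangle has no orientation, so the ``same orientation'' clauses force general position automatically, and conversely a pair of segments in convex position has its four endpoints as the vertices of a genuine convex quadrilateral, hence in general position. This lets me treat the general-position hypothesis as a consequence rather than an independent assumption once the orientation statements are in force.

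For the forward direction, assume $AB$ and $CD$ are in convex position. By \cref{cl:proper_criteria} the line $\ell(AB)$ misses the closed segment $CD$, so $C$ and $D$ lie strictly on the same side of $\ell(AB)$, which by the orientation-sign dictionary means $\ortr{A}{B}{C}$ and $\ortr{A}{B}{D}$ carry the same orientation; symmetrically $\ell(CD)\cap AB=\emptyset$ yields that $\ortr{C}{D}{A}$ and $\ortr{C}{D}{B}$ agree in orientation. For the converse, suppose the four points are in general position and the two matching-orientation conditions hold. The equality of orientation of $\ortr{A}{B}{C}$ and $\ortr{A}{B}{D}$ places $C,D$ strictly on one side of $\ell(AB)$, so $\ell(AB)$ does not meet the open segment $CD$; general position rules out $C$ or $D$ lying on $\ell(AB)$, giving $\ell(AB)\cap CD=\emptyset$, and symmetrically $\ell(CD)\cap AB=\emptyset$. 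Invoking \cref{cl:proper_criteria} then concludes that $AB$ and $CD$ are in convex position.

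The main obstacle, and the place where I would be careful, is the precise translation between the set-theoretic emptiness conditions of \cref{cl:proper_criteria} and the combinatorial sign conditions, since the two are not verbatim identical: the lemma phrases everything with open versus closed segments, and one must check that ``same orientation of the two triangles'' is genuinely equivalent to ``the two points lie strictly on one side of the line,'' including the endpoint cases. Concretely I would verify the signed-area identity $\sign(\text{area of }\ortr{A}{B}{X})$ equals the sign of the value of the affine functional defining $\ell(AB)$ at $X$, and confirm that the ``strictly on one side'' reading is the correct one so that boundary-touching configurations are excluded; this bookkeeping, rather than any deep idea, is the crux. Everything else reduces to applying \cref{cl:proper_criteria} and the elementary fact that a line separates two points of a segment exactly when the segment crosses it.
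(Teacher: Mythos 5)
Your proposal is correct and follows essentially the same route as the paper: both arguments reduce the statement to \cref{cl:proper_criteria} and translate each emptiness condition $\ell(AB)\cap CD=\emptyset$ (resp.\ $\ell(CD)\cap AB=\emptyset$) into ``both triangles are nondegenerate and have the same orientation.'' Your extra care about degenerate cases and the signed-area bookkeeping is consistent with, and slightly more explicit than, the paper's treatment.
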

\begin{proof}
We will prove the lemma by showing that its conditions are equivalent to those of \cref{cl:proper_criteria}. 
First we claim that the equation $\ell(AB) \cap CD = \emptyset$ is equivalent to the statement that the points in both sets $\{A,B,C\}$ and $\{A,B,D\}$ are in general position and the orientations of
$\ortr{A}{B}{C}$ and $\ortr{A}{B}{D}$ are the same.
Indeed, $\ortr{A}{B}{C}$ and $\ortr{A}{B}{D}$ are triangles if and only if $C,D \notin \ell(AB)$.
Moreover, the orientations of $\ortr{A}{B}{C}$ and $\ortr{A}{B}{D}$ are the same if and only if $\ell(AB)$ does not separate $C$ and $D$.

Using similar arguments, one can establish the equivalence of the equation \linebreak $\ell(CD) \cap AB = \emptyset$ and the statement that the points in both sets $\{C,D,A\}$ and $\{C,D,B\}$ are in general position and the orientations of triangles $\ortr{C}{D}{A}$ and $\ortr{C}{D}{B}$ are the same.
\end{proof}

Let $A,B,C$ be three points in $\gridMN$ in general position.
We say that $\mathcal{P} \subset \Conv(\gridMN)$ is the \emph{admissible region
with respect to} $A,B,C$ if for every point $D \in \mathcal{P}$
the segments $AB$ and $CD$ are in convex position.
%
\cref{prop:P+-} implies the following description of the admissible regions.

\begin{corollary}\label{cor:P+-}
	Let $A,B,C$ be three points in $\gridMN$ in general position.
	Then the interior of $\mathcal{P}_1 \cup \mathcal{P}_2$  
	defined as on \cref{fig:P+} is the admissible region with respect to $A,B,C$.
\end{corollary}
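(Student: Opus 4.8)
The plan is to obtain the admissible region by translating, one by one, the three requirements of \cref{prop:P+-} into loci for the free point $D$. Fix $A,B,C$ in general position. By \cref{prop:P+-}, the segments $AB$ and $CD$ are in convex position precisely when, simultaneously, (i) $A,B,C,D$ are in general position, (ii) $\ortr{A}{B}{D}$ has the same orientation as $\ortr{A}{B}{C}$, and (iii) $\ortr{C}{D}{A}$ has the same orientation as $\ortr{C}{D}{B}$. I would describe the set of $D$ cut out by each of (i)--(iii), intersect these sets with $\Conv(\gridMN)$, and check that the outcome is exactly the interior of $\mathcal{P}_1\cup\mathcal{P}_2$ drawn in \cref{fig:P+}.

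Conditions (i) and (ii) are immediate. Condition (i) only forbids $D$ from the three lines $\ell(AB)$, $\ell(AC)$, $\ell(BC)$, so it removes these lines from the region. Condition (ii) states that $D$ lies strictly on the same side of $\ell(AB)$ as $C$; this is a single open half-plane $H$ bounded by $\ell(AB)$, and $C$ itself lies in $H$.

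The essential step is condition (iii), which says that $\ell(CD)$ does not separate $A$ and $B$. Here I would examine the pencil of lines through $C$: the two lines $\ell(CA)$ and $\ell(CB)$ split the plane into four open cones with apex $C$. Recording the side of $\ell(CD)$ on which a point $P$ lies as the sign of the planar cross product $(D-C)\times(P-C)$, one verifies that $A$ and $B$ land on opposite sides --- so that $\ell(CD)$ separates them --- exactly when $D$ lies in the cone spanned by the rays $\dir{CA}$ and $\dir{CB}$ or in its vertical cone, and that $A,B$ land on the same side --- so that $\ell(CD)$ does not separate them --- exactly when $D$ lies in the complementary (vertical) pair of cones, namely the one bounded by $\dir{CA}$ and the ray opposite $\dir{CB}$ and the one bounded by $\dir{CB}$ and the ray opposite $\dir{CA}$. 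Hence condition (iii) restricts $D$ to this complementary double cone.

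To conclude, I would intersect the double cone of (iii) with the half-plane $H$ of (ii) and with $\Conv(\gridMN)$, and delete the three lines of (i). Since the apex $C$ lies in $H$, each of the two cones meets $H$ in a nonempty region, and cutting it further by the grid boundary produces one of the two pieces $\mathcal{P}_1$, $\mathcal{P}_2$ of \cref{fig:P+}; the union of their interiors is then exactly the admissible region. I expect the main obstacle to be the sign bookkeeping in (iii): one must argue carefully, via the cross-product signs (equivalently, the orientations in \cref{prop:P+-}), that it is the double cone complementary to $\angle ACB$ --- and not $\angle ACB$ together with its vertical angle --- that produces the non-separating lines $\ell(CD)$, and then match the two surviving regions with the shaded $\mathcal{P}_1,\mathcal{P}_2$ in the figure.
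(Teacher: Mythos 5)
Your proposal is correct and follows essentially the same route as the paper, which states the corollary as a direct consequence of \cref{prop:P+-} and encodes the argument in the caption of \cref{fig:P+}: condition (ii) gives the open half-plane bounded by $\ell(AB)$ containing $C$ (the striped region), and condition (iii) gives the two open cones at $C$ where $\ortr{C}{D}{A}$ and $\ortr{C}{D}{B}$ agree in orientation, whose intersections with the half-plane and $\Conv(\gridMN)$ are exactly $\mathcal{P}_1$ and $\mathcal{P}_2$. Your explicit cone analysis (and the observation that the open half-plane and open cones already enforce the general-position condition (i)) fills in precisely the details the paper leaves to the figure.
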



\begin{figure}
\centering
\begin{tikzpicture}[scale=1.2]
  
    \draw[light_grey,fill=light_grey!50] (-2,0) -- (0,0) -- (3,2) -- (2,3) -- (1.5,2.9) -- (1, 2.8) -- (0, 2.4) -- (-1, 1.7);
    \draw[light_grey,fill=light_grey!50] (7,0) -- (5,0) -- (3,2) -- (5,10/3) -- (5.5,9/3) -- (6, 7.5/3) -- (6.5, 4.5/3);
    \pattern[pattern=north east hatch, hatch distance=2.5mm, pattern color = gray, hatch thickness=.5pt] (-2,0) -- (7,0) -- (6.5, 4.5/3) -- (6, 7.5/3) -- (5.5,9/3) -- (5,10/3) -- (1.5,2.9) -- (1, 2.8) -- (0, 2.4) -- (-1, 1.7);
 	\draw (-2,0) -- (7,0);
 	\draw (0,0) -- (5,10/3);
 	\draw (5,0) -- (2,3);
 	\draw[black,fill=white] (0,0) circle (1.4pt);
	\draw (0,0) node [below] {$A$};
	\draw[black,fill=white] (5,0) circle (1.4pt);
	\draw (5,0) node [below] {$B$};
	\draw[black,fill=white] (3,2) circle (1.4pt);
	\draw (3,2.2) node [above] {$C$};
	\draw (1,2) node [above] {$\mathcal{P}_1$};
	\draw (5,2) node [above] {$\mathcal{P}_2$};
	
\end{tikzpicture}
\caption{The stripped region is the area of points $X$ such that the triangle $\protect\ortr{A}{B}{X}$ has the same orientation as $\protect\ortr{A}{B}{C}$. 
The region $\protect\mathcal{P}_1$ is the set of points $X$ such that $\protect\ortr{C}{X}{A}$, $\protect\ortr{C}{X}{B}$ are counterclockwise. 
The region $\protect\mathcal{P}_2$ is the set of points $X$ such that $\protect\ortr{C}{X}{A}$ and $\protect\ortr{C}{X}{B}$ are clockwise.
The union $\protect\mathcal{P}_1 \cup \mathcal{P}_2$ is the admissible region 
with respect to $A,B,C$.}
\label{fig:P+}
\end{figure}

For a polygon $\mathcal{P}$, denote by ${\cal L}(\mathcal{P})$ the number of integer points in $\mathcal{P}$, i.e.
$$
{\cal L}(P) = |\mathbb{Z}^2 \cap \mathcal{P}|.
$$

For a polygon $\mathcal{P}$ and a point $A$ denote by $Prime(\mathcal{P}, A)$ the number of integer points $X \in \mathcal{P}$ such that $AX$ is a prime segment.
If $A$ is the origin $O = (0,0)$ we simply write $Prime(\mathcal{P})$.

\begin{lemma}
\label{lem:S_triangle}
Let $\rMN$ be circumscribed about a triangle $\tr{A}{B}{C}$.
Then
\begin{equation}
\label{eq:Prime_points_in_triangle}
	Prime(\tr{A}{B}{C}, A) = \frac{6}{\pi^2}\Area(\tr{A}{B}{C}) + O(\m+\n).
\end{equation}
\end{lemma}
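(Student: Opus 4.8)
The plan is to translate the distinguished vertex $A$ to the origin and reinterpret the quantity as a count of \emph{visible} (primitive) lattice points. Let $T$ be the triangle $\tr{A}{B}{C}$ translated so that $A$ becomes the origin $O$. Then a lattice point $X \in \tr{A}{B}{C}$ has $AX$ prime if and only if $X - A$ is a \emph{nonzero} lattice point of $T$ with coprime coordinates, so $Prime(\tr{A}{B}{C}, A)$ equals the number of nonzero primitive lattice points of $T$, a triangle having $O$ as one of its vertices. Two geometric facts will be used repeatedly: $\Area(T) = \Area(\tr{A}{B}{C})$, and, since $\rMN$ is circumscribed about the triangle, $T$ spans the full width $\m$ and full height $\n$ of the box. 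Consequently its diameter is at least $\max(\m,\n)$ and its perimeter is $O(\m+\n)$.

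First I would apply M\"obius inversion over the greatest common divisor. Writing $\mathcal{L}_*(K) = \mathcal{L}(K) - 1$ for the number of nonzero lattice points of a region $K$ containing $O$, and $T/d$ for the triangle $T$ scaled by $1/d$ (so $dY \in T \iff Y \in T/d$), the unique factorization of a nonzero lattice point as $d\cdot Y$ with $Y$ primitive gives $\mathcal{L}_*(T/d) = \sum_{e \ge 1}(\text{primitive points of } T/(de))$, whence $Prime(T) = \sum_{d \ge 1}\mu(d)\,\mathcal{L}_*(T/d)$. The sum is in fact finite: $T/d$ contains a nonzero lattice point only for $d \le \operatorname{diam}(T) =: D = O(\m+\n)$. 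For the main term I would invoke Gauss's count $\mathcal{L}_*(T/d) = \Area(T)/d^2 + R_d$. The area part yields
\[
\Area(T)\sum_{d=1}^{D}\frac{\mu(d)}{d^2} = \frac{6}{\pi^2}\Area(T) + O\!\left(\frac{\Area(T)}{D}\right),
\]
using \eqref{eq:mobius_riemann}, and the tail is harmless because $\Area(T)/D \le \m\n/\max(\m,\n) = \min(\m,\n) = O(\m+\n)$.

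The delicate step, and the one I expect to be the main obstacle, is bounding $\sum_{d \le D}\mu(d) R_d$ by $O(\m+\n)$. The trivial bound $|R_d| = O(\operatorname{perim}(T)/d) = O((\m+\n)/d)$ only gives $O((\m+\n)\log(\m+\n))$, so the logarithm must be removed by exploiting cancellation of $\mu$ rather than the triangle inequality. The intended strategy is to sharpen Gauss's estimate for the cone-from-the-origin triangle $T/d$ to a second-order form $\mathcal{L}_*(T/d) = \Area(T)/d^2 + c_1(T)/d + O(1)$, where the coefficient $c_1(T)$ is an explicit boundary contribution of the three edges with $c_1(T) = O(\m+\n)$. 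Then the bounded partial sums $\sum_{d \le D}\mu(d)/d = O(1)$ reduce the $1/d$-layer to $c_1(T)\cdot O(1) = O(\m+\n)$, while the $O(1)$ remainders sum to $O(D) = O(\m+\n)$, completing the estimate. Establishing this second-order expansion with a controlled remainder is the technical heart, precisely because the scaled copies $T/d$ have non-lattice vertices; an equivalent route is the direct column/row summation of coprimality counts via \eqref{eq:sum_euler_gen}, where the same logarithm reappears through the error $\sum_x 2^{w(x)}$ and must again be tamed by cancellation rather than the crude bound \eqref{eq:sum_w}.
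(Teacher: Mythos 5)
Your overall route is the same as the paper's: translate $A$ to the origin, identify $Prime(\tr{A}{B}{C},A)$ with the number of primitive lattice points of the translated triangle $T$, apply M\"obius inversion over the scaled copies $\frac{1}{d}\cdot T$, and substitute a lattice-point count of the form $\Area(T)/d^2+(\text{error})$ for each copy (the paper cites Davenport for this). Your handling of the main term, including the tail bound $\Area(T)/D\le \m\n/\max(\m,\n)=\min(\m,\n)=O(\m+\n)$, coincides with the paper's.

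The gap is exactly where you locate it, and you do not close it. To reach the stated error $O(\m+\n)$ you posit a second-order expansion $\mathcal{L}_*(T/d)=\Area(T)/d^2+c_1(T)/d+O(1)$ with $c_1(T)$ independent of $d$ and a remainder bounded uniformly in $d$, and you explicitly defer its proof. Nothing in your argument establishes it, and the uniform $O(1)$ remainder is not to be expected: the boundary contribution of an edge of $T/d$ involves sums of fractional parts $\sum_{x\le N}\{\beta x\}$ whose deviation from $N/2$ is a discrepancy term that need not be bounded, so the edge contribution is not of the form $c/d+O(1)$ in general. Consequently, what your proposal actually proves is $\frac{6}{\pi^2}\Area(\tr{A}{B}{C})+O\big((\m+\n)\log(\m+\n)\big)$. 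The paper's own proof does not meet this obstacle only because it records the lattice-point error for $\frac{1}{h}\cdot T$ as $\frac{1}{h^2}O(\m+\n)$, i.e.\ scaling the error quadratically in $1/h$, after which the sum over $h$ converges and no logarithm appears; with the linear scaling $O((\m+\n)/h)$ that you (correctly, in my view) insist on, the logarithm cannot be removed by the triangle inequality, and your proposed mechanism for exploiting cancellation in $\mu$ is left unproved. It is worth noting that the weaker bound $O((\m+\n)\log(\m+\n))$ would still suffice for every subsequent application of this lemma in the paper, since all downstream estimates carry $o(\cdot)$ slack; the gap therefore concerns the lemma as stated rather than the final asymptotics of $p(m,n)$.
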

\begin{proof}
Without loss of generality we assume that $A$ coincides with the origin $O = (0,0)$.
Denote by $i\cdot \tr{A}{B}{C}$ the triangle $\tr{A}{B}{C}$ scaled for a given factor $i>0,$ i.e.
$$
i\cdot \tr{A}{B}{C} \ = \ \left\{(i\cdot x, i\cdot y) \in \mathbb{Z}^2 | (x,y) \in \tr{A}{B}{C} \ \right\}.
$$
We start with the equation
\begin{eqnarray}
\nonumber{\cal L}(\tr{A}{B}{C}) &=& Prime\left(\tr{A}{B}{C}\right)
+ Prime\left(\frac{1}{2}\cdot \tr{A}{B}{C}\right)
+ Prime\left(\frac{1}{3}\cdot \tr{A}{B}{C}\right) + \ldots \\[0.3cm]
\nonumber&=& \sum_{j=1}^{\m+\n} Prime\left(\frac{1}{j}\cdot \tr{A}{B}{C}\right)
\label{first} 
\end{eqnarray}
and the consequent one 
\begin{eqnarray}
\nonumber{\cal L}\left(\frac{1}{p}\cdot \tr{A}{B}{C}\right)  
&=& \sum_{q=1}^{\infty}  Prime\left(\frac{1}{p\cdot q}\cdot \tr{A}{B}{C}\right) \\
\nonumber&=& \sum_{q=1}^{(\m+\n)/p} Prime\left(\frac{1}{p\cdot q}\cdot \tr{A}{B}{C}\right).
\label{second} 
\end{eqnarray}
Using (\ref{eq:mobius}) we proceed with

\begin{eqnarray}
\nonumber Prime(\tr{A}{B}{C}) &=& \sum_{l=1}^{\m+\n}Prime\left( \frac{1}{l}\cdot \tr{A}{B}{C}\right)
\left(\sum_{k | l} \mu(k)\right) \quad \\
\nonumber& = & 
\sum_{h=1}^{\m+\n} \mu(h) 
     \sum_{i=1}^{(\m+\n)/h} Prime\left(\frac{1}{h i}\cdot \tr{A}{B}{C}\right) \\
\nonumber& = & \sum_{h=1}^{\m+\n} \mu(h) {\cal L}\left(\frac{1}{h}\cdot \tr{A}{B}{C}\right).
\end{eqnarray}
From \cite{Davenport1951} we have
$$
 {\cal L}\left(\tr{A}{B}{C}\right) = \Area(\tr{A}{B}{C}) + O\left(\m+\n\right),
$$
and hence
\begin{eqnarray}
\nonumber\sum_{h=1}^{\m+\n} \mu(h) {\cal L}\left(\frac{1}{h} \cdot \tr{A}{B}{C}\right) & = & \sum_{h=1}^{\m+\n} \mu(h) \frac{1}{h^2}\left(\Area(\tr{A}{B}{C}) + 
O\left(\m+\n\right)\right)  \\
 \nonumber \\
\nonumber&=&  \Area(\tr{A}{B}{C}) 
\left(\sum_{h=1}^{\infty} \frac{\mu(h)}{h^2}  - 
      \sum_{h=\m+\n+1}^{\infty} \frac{\mu(h)}{h^2}\right) +  
O\left(\m+\n\right)  \\
\nonumber&\overset{\mathrm{(\ref{eq:mobius_riemann})}}{=}&  \frac{6}{\pi^2} \Area(\tr{A}{B}{C}) +
\nonumber O\left(\frac{\Area(\tr{A}{B}{C})}{\m+\n} \ + \ \m+\n \right)  \\
\nonumber& = & \frac{6}{\pi^2} \Area(\tr{A}{B}{C})  \ + \ O(\m + \n).
\end{eqnarray}
\end{proof}


\begin{corollary}
\label{cor:S-triangle}
Let $\rMN$ be circumscribed about a triangle $\tr{A}{B}{C}$.
Then the number of internal points $X$ of $\tr{A}{B}{C}$ such that $AX$ is a prime segment is
\begin{equation}
	\frac{6}{\pi^2}\Area(\tr{A}{B}{C}) + O(\m+\n).
\end{equation}
\end{corollary}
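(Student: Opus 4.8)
The plan is to obtain this corollary directly from \cref{lem:S_triangle}, since the only difference between the two statements is that the lemma counts \emph{all} integer points $X \in \tr{A}{B}{C}$ with $AX$ prime, whereas here we restrict to the \emph{internal} such points. Thus the two counts differ by exactly the number of integer points $X$ lying on the boundary $\partial\tr{A}{B}{C}$ for which $AX$ is prime, and this difference is bounded above by the total number of integer points on $\partial\tr{A}{B}{C}$. The whole argument therefore reduces to showing that this boundary contribution is absorbed into the error term $O(\m+\n)$.

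To bound the boundary, I would argue as follows. Since $\rMN$ is circumscribed about $\tr{A}{B}{C}$, the triangle is contained in a rectangle with side lengths $\m$ and $\n$, so every edge of $\tr{A}{B}{C}$ has horizontal extent at most $\m$ and vertical extent at most $\n$. For an edge with endpoints $(x_1,y_1)$ and $(x_2,y_2)$ the number of integer points it carries equals $\gcd(|x_1-x_2|,|y_1-y_2|)+1 \le \min(\m,\n)+1 = O(\m+\n)$. Summing over the three edges gives $O(\m+\n)$ integer points on $\partial\tr{A}{B}{C}$.

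Combining these two observations, the number of internal points $X$ with $AX$ prime equals $Prime(\tr{A}{B}{C}, A)$ up to an additive $O(\m+\n)$ term, and substituting the formula from \cref{lem:S_triangle} yields $\frac{6}{\pi^2}\Area(\tr{A}{B}{C}) + O(\m+\n)$, as required. There is no real obstacle here: the statement is an immediate consequence of the lemma, and the only thing to verify—that the lattice points on the three edges number $O(\m+\n)$—follows at once from the standard $\gcd$ count together with the containment of the triangle in the $\m\times\n$ rectangle.
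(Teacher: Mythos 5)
Your argument is correct and is exactly the intended derivation: the corollary differs from \cref{lem:S_triangle} only by the lattice points on the boundary of $\tr{A}{B}{C}$, and these number $O(\m+\n)$. One tiny inaccuracy: for an edge with one coordinate difference equal to zero the lattice-point count is $\max(|x_1-x_2|,|y_1-y_2|)+1$ rather than $\min(\m,\n)+1$ (since $\gcd(a,0)=a$), but this is still at most $\max(\m,\n)+1=O(\m+\n)$, so the conclusion stands.
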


The lemma and the corollary above will be applied in the rest of the section and in \cref{sec:one_corner_point} in the following way. 
\cref{cor:P+-} implies that given the points $A,B,C$ all the possible choices for $D$ such that $AB$ and $CD$ are in convex position are contained in the admissible region $\mathcal{P}$.
In the most cases $\mathcal{P}$ will be a pair of triangles that have a common vertex $C$.
In both cases we will use \cref{lem:S_triangle} (or its corollary) to estimate the number of possible points $D$ in $\mathcal{P}$ such that $CD$ is a prime segment.

\begin{lemma}\label{lem:Z2b}
\begin{equation*}
\label{eq:4_final}
|Z_2^b(\m,\n)| = \begin{cases}
\frac{1}{\pi^2}\m^2\n^2 + O(\m^2 \n + \m \n^2) & \text{if $\m\perp \n$},\\
0 & \text{otherwise}.
\end{cases}
\end{equation*}
\end{lemma}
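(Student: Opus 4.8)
The set $Z_2^b(\m,\n)$ consists of pairs $\{AB,CD\}$ in convex position for which the two corner points $X,Y$ are \emph{opposite} vertices of $\rMN$ and lie on the \emph{same} segment. Without loss of generality I would place the two corner points at opposite corners of the rectangle, say $A = (0,0)$ and $B = (\m,\n)$ (so $AB$ is a diagonal of $\rMN$), leaving $C$ and $D$ as the two non-corner endpoints of the other segment. For the rectangle to be circumscribed, the sides of $\rMN$ must each touch $\Conv(\{A,B,C,D\})$; since $A,B$ already occupy opposite corners, the segment $CD$ must reach the two remaining sides of the rectangle. I would first observe that $AB$ can only be prime when $\m \perp \n$ — the diagonal from $(0,0)$ to $(\m,\n)$ contains an interior lattice point exactly when $\gcd(\m,\n) > 1$ — which immediately gives the value $0$ in the non-coprime case and explains the case split in the statement.

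\textbf{Counting $C$ and $D$ via the admissible region.}
Assuming $\m \perp \n$, I would count the valid pairs $\{C,D\}$. The plan is to fix one endpoint, say $C$, as an interior lattice point of $\rMN$ not on the diagonal $AB$, and then use \cref{cor:P+-} to describe the admissible region $\mathcal P = \mathcal P_1 \cup \mathcal P_2$ of points $D$ for which $AB$ and $CD$ are in convex position. Because $AB$ is a fixed diagonal, the line $\ell(AB)$ splits the rectangle, and the admissible region relative to $A,B,C$ is (up to lower-order boundary effects) a triangle-like region whose area I can compute explicitly. I would then invoke \cref{cor:S-triangle} to count, for fixed $C$, the number of $D$ with $CD$ prime lying in the admissible region as $\frac{6}{\pi^2}\Area + O(\m+\n)$, and finally sum over all admissible $C$. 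The geometric content is that summing the relevant triangle areas over the grid of positions for $C$ produces a double sum whose leading term evaluates to $\frac{1}{\pi^2}\m^2\n^2$; the constant $\tfrac{6}{\pi^2}$ from primality times the geometric averaging factor $\tfrac16$ gives the stated $\tfrac1{\pi^2}$.

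\textbf{Accounting for orientation, double-counting, and error terms.}
A few bookkeeping points need care. Since $\{AB,CD\}$ is an unordered pair and $\{C,D\}$ an unordered pair of endpoints, I must divide by the appropriate symmetry factor (the factor $2$ from swapping $C$ and $D$ interacts with the $\tfrac16$ and $\tfrac6{\pi^2}$ to land on $\tfrac1{\pi^2}$), and I must ensure I am not also counting the configurations where $C$ or $D$ is itself a corner — those belong to $Z_3$ or $Z_4$, already handled in \cref{lem:Z34}. The error analysis then consists of: the $O(\m+\n)$ per-$C$ error from \cref{cor:S-triangle}, summed over the $O(\m\n)$ choices of $C$, contributes $O(\m\n(\m+\n)) = O(\m^2\n + \m\n^2)$; and the discrepancy between the continuous area and the true admissible polygon (boundary lattice points, the deleted diagonal line, the circumscription constraint forcing $CD$ to meet the two free sides) is also $O(\m^2\n + \m\n^2)$. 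Collecting these gives exactly the claimed error term.

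\textbf{Main obstacle.}
The step I expect to be most delicate is pinning down the leading coefficient through the area computation: I must correctly identify the admissible region for $D$ as a function of $C$ (which half of $\mathcal P_1 \cup \mathcal P_2$ survives the circumscription constraint, given that $A,B$ are already at opposite corners), and then carry out the summation of these areas over all positions of $C$ to confirm it yields $\tfrac16 \cdot \tfrac6{\pi^2}\cdot \tfrac12 \cdot (\text{geometric sum}) = \tfrac1{\pi^2}\m^2\n^2$ rather than some other rational multiple. Keeping the normalization constants — the $\tfrac6{\pi^2}$ primality density, the averaging of triangle areas over the grid, and the unordered-pair factor — consistent throughout is where an error would most easily creep in.
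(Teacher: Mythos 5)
Your proposal follows essentially the same route as the paper: observe that the diagonal is prime iff $\m\perp\n$ (giving the zero case), fix the diagonal as $AB$, describe the admissible region for $D$ via \cref{cor:P+-} as a triangle with apex $C$, count prime segments $CD$ there with \cref{cor:S-triangle}, and sum the resulting areas over all positions of $C$, which is exactly the computation the paper carries out. One small correction: since $A$ and $B$ sit at opposite corners they already touch all four sides of $\rMN$, so circumscription imposes no extra condition on $CD$ (your remark that $CD$ ``must reach the two remaining sides'' is vacuous and, if actually imposed as a constraint, would be wrong); this does not affect the rest of your argument, which sums over all interior positions of $C$ as the paper does.
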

\begin{proof}

\begin{figure}
\centering
\begin{tikzpicture}[scale=1.2]

	\draw [<->,thick] (0,4) node (yaxis) [above] {}
       |- (6,0) node (xaxis) [right] {};
				
	\draw (0,0) -- (5,0);
	\draw (0,0) -- (0,3);
	\draw (0,3) -- (5,3);
	\draw (5,0) -- (5,3);
	\draw (5,0) -- (0,3);
	\draw[black,fill=light_grey] (0,5/3) -- (2,1) -- (0,3);
	\draw[black,fill=light_grey] (3,0) -- (2,1) -- (5,0);
	\draw[black,fill=white] (0,0) circle (1.4pt);
	\draw (0,0) node [left] {$R_1$};
	\draw (5,0) -- (0,5/3);
	\draw[black,fill=white] (0,5/3) circle (1.4pt);
	\draw (0,5/3) node [left] {$Y$};
	\draw (0,3) -- (3,0);
	\draw[black,fill=white] (3,0) circle (1.4pt);
	\draw (3,0) node [below] {$X$};
	\draw[black,fill=white] (2,1) circle (1.4pt);
	\draw (2,1) node [above right] {$C$};
	\draw[black,fill=white] (0,3) circle (1.4pt);
	\draw (0,3) node [left] {$A=R_2$};
	\draw[black,fill=white] (5,3) circle (1.4pt);
	\draw (5,3) node [right] {$R_3$};
	\draw[black,fill=white] (5,0) circle (1.4pt);
	\draw (5,0) node [below] {$B=R_4$};
	
\end{tikzpicture}
\caption{$C \in \tr{A}{B}{R_1}$, the grey triangles form the admissible region 
	with respect to $A,B,C$.}
\label{fig:main_4}
\end{figure}
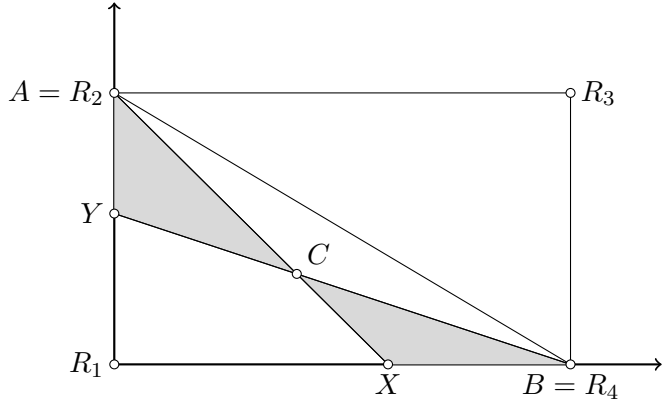

First, we notice that for non-coprime $\m$ and $\n$ the diagonal of $\rMN$ is not a prime segment and the set $Z_2^b(\m,\n)$ is empty.

Let now $\m$ and $\n$ be coprime. Let us denote the vertices of $\rMN$ by $R_1, R_2, R_3$, and $R_4$ as in \cref{fig:main_4}, and
consider a pair $\{AB, CD\}$ from $Z_2^b(\m,\n)$.
Without loss of generality we assume that $AB$ is a diagonal of $\rMN$, i.e. either $AB = R_2R_4$ or $AB = R_1R_3$.
Let us assume that $AB = R_2R_4$. 
Since, by definition, $CD$ does not intersect $AB$, either $CD \in \tr{A}{B}{R_1}$ or $CD \in \tr{A}{B}{R_3}$.
Let us assume that $CD \in \tr{A}{B}{R_1}$ and let $C=(c_1,c_2)$, $D = (d_1,d_2)$.
Clearly, $c_1 \neq d_1$ as otherwise $\ell(CD)$ would intersect $AB$.
Without loss of generality we assume that $c_1 > d_1$. 
Let us denote
$$
X=\ell(AC) \cap R_1B = \left(\frac{\n c_1}{\n-c_2},0\right)
$$ 
and 
$$
Y =  \ell(BC) \cap R_1A = \left(0, \frac{\m c_2}{\m-c_1}\right).
$$
It follows from \cref{cor:P+-} that $AB$ and $CD$ are in convex position if and only if $D$ is an interior point of $ACY$ or $BCX$.
As $c_1 > d_1$ we conclude that $D \in ACY$. 
By \cref{lem:S_triangle}, the number of choices for point $D$ such that $CD$ is a prime segment is 
$$
\frac{6}{\pi^2}\Area(\tr{A}{C}{Y}) + o \left(c_1 \left(\n - c_2\right)\right)= \frac{6}{\pi^2}\Area(\tr{A}{C}{Y}) + O(\m + \n).
$$
Hence, summing up over all possible choices for the point $C$ in $\tr{A}{B}{R_1}$ and multiplying by 4 to take into account
the cases of $C \in \tr{A}{B}{R_3}$ and $AB=R_1R_3$ we derive
\begin{flalign*}
\nonumber |Z_2^b(u,v)| &= 
4\sum_{c_1=1}^{\m-1}\sum_{c_2=1}^{\left\lfloor\frac{\n(\m-c_1)}{\m}\right\rfloor} \left( \frac{6}{\pi^2}\Area(\tr{A}{C}{Y}) + O(\m + \n) \right) \\
&= \frac{24}{\pi^2} \sum_{c_1=1}^{\m-1}\sum_{c_2=1}^{\left\lfloor\frac{\n(\m-c_1)}{\m}\right\rfloor} \Area(\tr{A}{C}{Y}) + O(\m^2 \n + \m \n^2),
\end{flalign*}
where
$$
\Area(\tr{A}{C}{Y}) = \frac{c_1 \cdot d(A,Y)}{2} = \frac{1}{2}c_1 \left(\n - \frac{\m c_2}{\m-c_1}\right)=\frac{1}{2}\left(\n c_1 - \frac{\m c_1c_2}{\m-c_1}\right).
$$
Therefore, we have
\begin{flalign*}
|Z_2^b(u,v)| &= \frac{12}{\pi^2}\sum_{c_1=1}^{\m-1}\sum_{c_2=1}^{\left\lfloor\frac{\n(\m-c_1)}{\m}\right\rfloor}\left(\n c_1 - \frac{\m c_1}{\m-c_1}c_2\right) + O(\m^2 \n + \m \n^2)\\
&\overset{\mathrm{(\ref{eq:sum_powers})}}{=}\frac{12}{\pi^2}\sum_{c_1=1}^{\m-1}\left(\frac{\n^2c_1(\m-c_1)}{\m} - \frac{\m c_1}{\m-c_1}\left(\frac{\n^2(\m-c_1)^2}{2\m^2}+ O\left(\frac{\n(\m-c_1)}{\m}\right)\right)\right) \\
&+ O(\m^2 \n + \m \n^2)\\
&=\frac{12}{\pi^2}\sum_{c_1=1}^{\m-1}\left(\frac{\n^2c_1(\m-c_1)}{2\m}\right) + O(\m^2 \n + \m \n^2)\\
&=\frac{6\n^2}{\pi^2}\sum_{c_1=1}^{\m-1}\left( c_1 - \frac{c_1^2}{\m}\right) + O(\m^2 \n + \m \n^2)\\
&=\frac{6\n^2}{\pi^2}\left(\frac{(\m-1)^2}{2} - \frac{(\m-1)^3}{3\m} + O(\m)\right) + O(\m^2 \n + \m \n^2) \\
&=\frac{1}{\pi^2}\m^2\n^2 + O(\m^2 \n + \m \n^2).
\end{flalign*}

\end{proof}

\begin{lemma}\label{lem:Z2c}
$
|Z_2^c(\m,\n)| = \frac{42}{\pi^4}\m^2\n^2 + o(\m^2\n^2).
$
\end{lemma}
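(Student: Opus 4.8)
The plan is to follow the strategy of \cref{lem:Z2b}: fix the two opposite corners occupied by endpoints and sum over the positions of the remaining two points. Place $\rMN$ with corners at $(0,0),(\m,0),(\m,\n),(0,\n)$. By symmetry it suffices to treat the main diagonal---say the corner $(0,0)$ is an endpoint of one segment and $(\m,\n)$ an endpoint of the other---and to multiply the resulting count by $2$ to account for the anti-diagonal. Setting $A=(0,0)$ and $C=(\m,\n)$, the free endpoints $B=(b_1,b_2)$ of $AB$ and $D$ of $CD$ must satisfy that $AB$ is prime (equivalently $b_1\perp b_2$) and that $AB,CD$ are in convex position, while $\rMN$ is automatically circumscribed about $\Conv(\{A,B,C,D\})$, since two opposite corners already meet all four sides.

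First I would determine, for a fixed $B$, the admissible region for $D$ using \cref{cor:P+-}. The lines $\ell(CA)$ and $\ell(CB)$ through $C$, together with $\ell(AB)$, cut the rectangle so that the admissible region is the disjoint union of two triangles sharing the vertex $C$: the triangle $T_1$ with vertices $C,(0,0),(0,\n)$ lying above the diagonal $AC$, whose area $\tfrac12\m\n$ is \emph{independent} of $B$, and the triangle $T_2=\tr{B}{C}{E}$ below the diagonal, where $E=\ell(AB)\cap\{x=\m\}$; a short computation gives $\Area(T_2)=\frac{(\m-b_1)(\n b_1-\m b_2)}{2b_1}$ when $B$ lies below the diagonal (the case $B$ above the diagonal is symmetric and contributes equally). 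I would justify the decomposition by checking that every point above $AC$ is non-separating for $\{A,B\}$ and on the $C$-side of $\ell(AB)$, whereas below $AC$ admissibility is equivalent to lying below $\ell(CB)$ and above $\ell(AB)$, so that no extra pieces arise from clipping against the rectangle boundary.

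Next, applying \cref{lem:S_triangle} (equivalently \cref{cor:S-triangle}) to $T_1$ and $T_2$ with apex $C$, the number of $D$ with $CD$ prime equals $\frac{6}{\pi^2}\big(\tfrac12\m\n+\Area(T_2)\big)+O(\m+\n)$. Summing over all $B$ with $b_1\perp b_2$ below the diagonal splits into two sums. The $T_1$-contribution is $\frac{6}{\pi^2}\cdot\tfrac12\m\n$ times the number of coprime points below the diagonal, namely $\frac{6}{\pi^2}\cdot\tfrac12\m\n$, giving $\frac{9}{\pi^4}\m^2\n^2$. The $T_2$-contribution is $\frac{6}{\pi^2}\sum_{b_1\perp b_2}\Area(T_2)$; here, unlike in \cref{lem:Z2b}, the coprimality of $B$ is essential, and I would evaluate the inner sum over $b_2\perp b_1$ via \eqref{eq:sum_euler_gen} and the outer sum over $b_1$ via the Euler-function estimates \eqref{eq:sum_phi_n_k}, which reproduces $\frac{6}{\pi^2}\cdot\frac{6}{\pi^2}\iint\Area(T_2)\,db_1\,db_2=\frac{36}{\pi^4}\cdot\frac{\m^2\n^2}{24}=\frac{3}{2\pi^4}\m^2\n^2$. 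Adding the two contributions gives $\frac{21}{2\pi^4}\m^2\n^2$ for $B$ below the diagonal; doubling for $B$ above the diagonal and then for the anti-diagonal yields $\frac{42}{\pi^4}\m^2\n^2$.

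Finally, the $O(\m+\n)$ error per point $B$ sums to $O(\m\n(\m+\n))=o(\m^2\n^2)$, and the errors incurred in replacing the coprime sums by the integral (arising from the $\phi$-summation formulas) are likewise $o(\m^2\n^2)$, which accounts for the claimed error term. The main obstacle I anticipate is the geometric bookkeeping of the admissible region---pinning down that it is exactly $T_1$ together with $T_2$ for every position of $B$, with the boundary clipping producing no further pieces---combined with the careful execution of the double summation under the coprimality constraint on $B$, since this constraint is precisely what distinguishes $Z_2^c$ from the one-free-point situation of \cref{lem:Z2b}.
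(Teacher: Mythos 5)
Your proposal is correct and follows essentially the same route as the paper: fix $A$ and $C$ at opposite corners, use \cref{cor:P+-} to identify the admissible region for $D$ as the full triangle of $\rMN$ on the opposite side of the diagonal from $B$ together with the triangle $\tr{B}{C}{E}$ cut off by $\ell(AB)$, apply \cref{lem:S_triangle} with apex $C$, and sum over coprime $B$. Your areas, the integral $\iint\Area(T_2)=\m^2\n^2/24$, and the resulting contributions $\frac{9}{\pi^4}\m^2\n^2+\frac{3}{2\pi^4}\m^2\n^2$ per case (times $4$) agree exactly with the paper's term-by-term evaluation of the same double sum.
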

\begin{proof}
Consider a pair $\{AB, CD\}$ from $Z_2^c(\m,\n)$.
Without loss of generality we assume $\{A,B,C,D\} \cap \Vertic(\rMN) = \{A,C\}$, that is
either $\{A,C\} = \{R_1,R_3\}$ or $\{A,C\} = \{R_2,R_4\}$.
The cases are symmetric, and hence it suffices to consider one of them, say $\{A,C\} = \{R_1,R_3\}$.
Without loss of generality we assume $A=R_1$ and $C=R_3$ as in \cref{fig:AC-corners}.
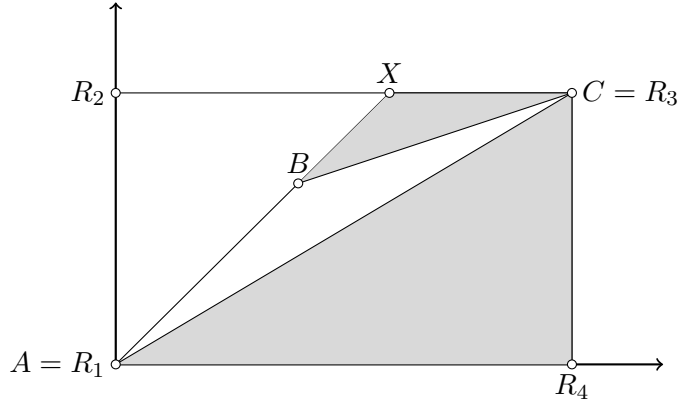
\begin{figure}
\centering
\begin{tikzpicture}[scale=1.2]
	\draw [<->,thick] (0,4) node (yaxis) [above] {}
       |- (6,0) node (xaxis) [right] {};
				
	\draw (0,0) -- (5,0);
	\draw (0,0) -- (0,3);
	\draw (0,3) -- (5,3);
	\draw (5,0) -- (5,3);
	\draw (0,0) -- (3,3);
	\draw[black,fill=light_grey] (0,0) -- (5,3) -- (5,0);
	\draw[black,fill=light_grey] (3,3) -- (5,3) -- (2,2);
	\draw[black,fill=white] (0,0) circle (1.4pt);
	\draw (0,0) node [left] {$A=R_1$};
	\draw[black,fill=white] (3,3) circle (1.4pt);
	\draw (3,3) node [above] {$X$};
	\draw[black,fill=white] (2,2) circle (1.4pt);
	\draw (2,2) node [above] {$B$};
	\draw[black,fill=white] (0,3) circle (1.4pt);
	\draw (0,3) node [left] {$R_2$};
	\draw[black,fill=white] (5,3) circle (1.4pt);
	\draw (5,3) node [right] {$C=R_3$};
	\draw[black,fill=white] (5,0) circle (1.4pt);
	\draw (5,0) node [below] {$R_4$};
	
\end{tikzpicture}
\caption{The point $B$ belongs to $\tr{A}{R_2}{R_3}$, the grey triangles form the admissible region
	with respect to $A,B,C$.}
\label{fig:AC-corners}
\end{figure}
The point $B =(b_1,b_2)$ belongs to one of the triangles $\tr{A}{C}{R_2}$ and $\tr{A}{C}{R_4}$.
Due to symmetry, we assume without loss of generality $B \in \tr{A}{C}{R_2}$, in which case we have~$b_2 > \frac{\n b_1}{\m}$.
Let us denote 
$$
X = \ell(AB) \cap CR_2 = \left(\frac{\n b_1}{b_2}, \n\right).
$$
It follows from \cref{cor:P+-} that $AB$ and $CD$ are in convex position if and only if $D$ is an interior point of 
$\tr{A}{C}{R_4} \cup \tr{B}{C}{X}$ or an interior point of one of the segments $CX$, $AR_4$, or $CR_4$.
By \cref{lem:S_triangle} and \cref{cor:S-triangle}, the number of possible choices for $D$ such that $CD$ is a prime segment for a fixed $B$ is
$$
\frac{6}{\pi^2}\big(\Area(\tr{A}{C}{R_4}) + \Area(\tr{B}{C}{X})\big) + O(\m + \n),
$$
where
$$
\Area(\tr{A}{C}{R_4}) = \frac{\m\n}{2}
$$
and
\begin{flalign*}
\Area(\tr{B}{C}{X}) &= \frac{(\n-b_2) \cdot d(C,X)}{2}= 
\frac{1}{2}(\n-b_2)\left(\m - \frac{\n b_1}{b_2}\right) \\
&= 
\frac{1}{2}\left(uv + vb_1 - ub_2 - \frac{v^2b_1}{b_2} \right).
\end{flalign*}
Therefore, summing over all possible choices of $B$ and multiplying by 4 to take into account the cases 
$B \in \tr{A}{C}{R_4}$ and $\{A,C\} = \{R_2,R_4\}$ we derive:
\begin{flalign}
\label{eq:3.0}
|Z_2^c(\m,\n)| = \nonumber&4\sum_{b_1=1}^{\m-1}\sum_{\substack{b_2=\left\lfloor\frac{\n b_1}{\m}+1\right\rfloor \\ b_2 \perp b_1}}^{\n}\frac{6}{\pi^2}(\Area(\tr{A}{C}{R_4}) + \Area(\tr{B}{C}{X}) + O(\m + \n))\\
&= \frac{12}{\pi^2}\sum_{b_1=1}^{\m-1}\sum_{\substack{b_2=\left\lfloor\frac{\n b_1}{\m}+1\right\rfloor \\ b_2 \perp b_1}}^{\n}\left(2\m\n + \n b_1 - \m b_2 - \frac{\n^2 b_1}{b_2}\right) + O(\m^2\n + \m\n^2).
\end{flalign}
We will estimate the asymptotics of different summands of (\ref{eq:3.0}) separately.

\noindent
\textbf{1. Estimation of $\sum\sum 2\m\n$.}

Using formulas (\ref{eq:sum_euler_gen}), (\ref{eq:sum_phi_n_k}), and (\ref{eq:sum_w}) we obtain
\begin{flalign}
\label{eq:3.one_first_line}
\sum_{b_1=1}^{\m-1}\sum_{\substack{b_2=\left\lfloor\frac{\n b_1}{\m}+1\right\rfloor \\ b_2 \perp b_1}}^{\n}1 &=\sum_{b_1=1}^{\m-1}\left(\n\frac{ \phi (b_1)}{b_1} - \frac{\n }{\m}\phi (b_1) + O\left(2^{w(b_1)}\right)\right)\\
\nonumber&= \n\left(\frac{6}{\pi^2}\m + O(\log\m)\right) - \frac{\n}{\m}\left(\frac{3}{\pi^2}\m^2 + O(\m\log\m)\right) + O(\m\log\m)\\
\label{eq:3.one_ver1}
&= \frac{3}{\pi^2}\m\n+O(\n\log \m) + O(\m \log \m).
\end{flalign}
Changing the order of summation in the above sum, we deduce the same result, but with a slightly different error term:
\begin{flalign}
\label{eq:3.one_ver2}
\nonumber \sum_{b_1=1}^{\m-1}\sum_{\substack{b_2=\left\lfloor\frac{\n b_1}{\m}+1\right\rfloor \\ b_2 \perp b_1}}^{\n}1 &=\sum_{b_2=1}^{\n}\sum_{\substack{b_1=1 \\ b_1 \perp b_2}}^{\left\lceil\frac{\m b_2}{\n}-1\right\rceil} 1\\
\nonumber&=\sum_{b_2=1}^{\n}\left(\frac{ \phi (b_2)}{b_2}\left\lceil\frac{\m b_2 }{\n}-1\right\rceil + O\left(2^{w(b_2)}\right)\right)\\
\nonumber&=\sum_{b_2=1}^{\n}\left(\frac{\m }{\n}\phi(b_2) + O\left(\frac{\phi (b_2)}{b_2}\right) + O\left(2^{w(b_2)}\right)\right)\\
\nonumber&=\frac{\m }{\n}\left(\frac{3}{\pi^2}\n^2 + O\left(\n \log \n\right)\right) + O(\n) + O\left(\n \log \n\right)\\
&= \frac{3}{\pi^2}\m\n+O(\m\log \n) + O(\n \log \n).
\end{flalign}
Finally, denoting $\alpha = \max(\m,\n)$ and $\beta = \min(\m,\n)$ we derive from (\ref{eq:3.one_ver1}) and (\ref{eq:3.one_ver2}) 
\begin{flalign}
\nonumber\sum_{b_1=1}^{\m-1}\sum_{\substack{b_2=\left\lfloor\frac{\n b_1}{\m}+1\right\rfloor \\ b_2 \perp b_1}}^{\n} 1 &= \frac{3}{\pi^2}\m\n + O(\alpha\log \beta) + O(\beta\log \beta) \\
\label{eq:3.one_joined}
&= \frac{3}{\pi^2}\m\n + O(\m \log \n + \n \log \m),
\end{flalign}
and hence
\begin{flalign}
\label{eq:3.first}
\sum_{b_1=1}^{\m-1}\sum_{\substack{b_2=\left\lfloor\frac{\n b_1}{\m}+1\right\rfloor \\ b_2 \perp b_1}}^{\n} 2\m\n =  \frac{6}{\pi^2}\m^2\n^2 + O(\m^2 \n \log \n + \m \n^2 \log \m).
\end{flalign}

\noindent
\textbf{2. Estimation of $\sum\sum \n b_1$.}

Using formulas (\ref{eq:3.one_first_line}) and (\ref{eq:sum_phi_n_k}) we obtain
\begin{flalign}
\label{eq:sum_b_1_ver1}
\nonumber  \sum_{b_1=1}^{\m-1}\sum_{\substack{b_2=\left\lfloor\frac{\n b_1}{\m}+1\right\rfloor \\ b_2 \perp b_1}}^{\n}b_1 &= \sum_{b_1=1}^{\m-1}b_1\sum_{\substack{b_2=\left\lfloor\frac{\n b_1}{\m}+1\right\rfloor \\ b_2 \perp b_1}}^{\n}1 \\
\nonumber&=\sum_{b_1=1}^{\m-1}\left(\n\phi (b_1) - \frac{\n }{\m}b_1\phi (b_1) + O\left(b_12^{w(b_1)}\right)\right)\\
\nonumber&=\frac{3}{\pi^2}\m^2\n + O(\m\n\log\m) - \frac{\n}{\m}\left( \frac{2}{\pi^2}\m^3 + O(\m^2\log \m)\right)+ O(\m^2 \log \m)\\
&= \frac{1}{\pi^2}\m^2\n+O(\m \n\log \m) + O(\m^2  \log \m).
\end{flalign}

Again, changing the order of summation in the above sum, we deduce the same result with a different error term:
\begin{flalign}
\label{eq:sum_b_1_ver2}
\nonumber  \sum_{b_1=1}^{\m-1}\sum_{\substack{b_2=\left\lfloor\frac{\n b_1}{\m}+1\right\rfloor \\ b_2 \perp b_1}}^{\n}b_1 &= \sum_{b_2=1}^{\n}\sum_{\substack{b_1=1 \\ b_1 \perp b_2}}^{\left\lceil\frac{\m b_2}{\n}-1\right\rceil} b_1 \\
\nonumber&= \sum_{b_2=1}^{\n}\left(\frac{\phi(b_2)}{2b_2}\left\lceil\frac{\m b_2}{\n} -1 \right\rceil^2 + \frac{\m b_2}{\n}O\left(2^{w(b_2)}\right) + O\left(2^{w(b_2)}\right)\right)\\
\nonumber&= \sum_{b_2=1}^{\n}\left( \frac{\m^2}{2\n^2}b_2\phi(b_2) +\frac{\m}{2\n}\phi(b_2) + \frac{\m b_2}{\n}O\left(2^{w(b_2)}\right) + O\left(2^{w(b_2)}\right)\right)\\
&= \frac{1}{\pi^2}\m^2\n + O(\m^2  \log \n) +O(\m \n\log \n).
\end{flalign}

Finally, denoting $\alpha = \max(\m,\n)$ and $\beta = \min(\m,\n)$ we derive from (\ref{eq:sum_b_1_ver1}) and (\ref{eq:sum_b_1_ver2}) 
\begin{flalign}
\nonumber\sum_{b_1=1}^{\m-1}\sum_{\substack{b_2=\left\lfloor\frac{\n b_1}{\m}+1\right\rfloor \\ b_2 \perp b_1}}^{\n}b_1 = \frac{1}{\pi^2}\m^2\n + \m\left(O(\alpha \log \beta) + O(\beta \log \beta)\right) = \frac{1}{\pi^2}\m^2\n + o(\m^2\n),
\end{flalign}
and hence

\begin{flalign}
\label{eq:3.b_1}
\sum_{b_1=1}^{\m-1}\sum_{\substack{b_2=\left\lfloor\frac{\n b_1}{\m}+1\right\rfloor \\ b_2 \perp b_1}}^{\n}\n b_1 = \frac{1}{\pi^2}\m^2\n^2 + o(\m^2\n^2)
\end{flalign}

\noindent
\textbf{3. Estimation of $\sum\sum \m b_2$.}

Using formulas  (\ref{eq:sum_euler_gen}), (\ref{eq:sum_w}), and (\ref{eq:sum_phi_n_k}) we obtain:
\begin{flalign}
\label{eq:3.2}
\nonumber\sum_{b_1=1}^{\m-1}\sum_{\substack{b_2=\left\lfloor\frac{\n b_1}{\m}+1\right\rfloor \\ b_2 \perp b_1}}^{\n} b_2 &= \sum_{b_1=1}^{\m-1}\left(\frac{\phi(b_1)\n^2}{2b_1} - \frac{\phi(b_1)b_1\n^2}{2\m^2} + O\left(\n 2^{w(b_1)}\right)\right)\\
\nonumber&=\frac{\n^2}{2}\sum_{b_1=1}^{\m-1}\left(\frac{\phi(b_1)}{b_1} - \frac{1}{\m^2}b_1\phi(b_1)\right)+O(\m\n\log \m)\\
\nonumber&= \frac{\n^2}{2}\left(\frac{6}{\pi^2}\m + O(\log\m)- \frac{1}{\m^2}\left(\frac{2}{\pi^2}\m^3 + O(\m^2\log \m)\right)\right) + O(\m\n\log \m)\\
&= \frac{2}{\pi^2}\m\n^2 + O(\m\n\log\m) + O(\n^2\log\m).
\end{flalign}
Similarly to the previous case, by changing the order of summation, one can show that 
$$
	\sum_{b_1=1}^{\m-1}\sum_{\substack{b_2=\left\lfloor\frac{\n b_1}{\m}+1\right\rfloor \\ b_2 \perp b_1}}^{\n} b_2 = 
	\frac{2}{\pi^2}\m\n^2 + O(\m\n\log\n) + O(\n^2\log\n),
$$
which together with (\ref{eq:3.2}) imply
\begin{flalign}
\nonumber\sum_{b_1=1}^{\m-1}\sum_{\substack{b_2=\left\lfloor\frac{\n b_1}{\m}+1\right\rfloor \\ b_2 \perp b_1}}^{\n} b_2 = \frac{2}{\pi^2}\m\n^2 + o(\m\n^2),
\end{flalign}

and hence
\begin{flalign}
\label{eq:m_b2}
\sum_{b_1=1}^{\m-1}\sum_{\substack{b_2=\left\lfloor\frac{\n b_1}{\m}+1\right\rfloor \\ b_2 \perp b_1}}^{\n} \m b_2 = \frac{2}{\pi^2}\m^2\n^2 + o(\m^2\n^2).
\end{flalign}

\noindent
\begin{minipage}{\textwidth}
\noindent
\textbf{4. Estimation of $\sum\sum \frac{\n^2 b_1}{b_2}$.}

Using formulas (\ref{eq:sum_euler_divide_new}), (\ref{eq:sum_phi_ln}), (\ref{eq:sum_phi_n_k}), and the fact that 
$\log \left\lfloor x \right\rfloor = \log x + O\left(\frac{1}{x}\right)$ for $x \geq 1$, we obtain
\begin{flalign}
\nonumber&\sum_{b_1=1}^{\m-1}\sum_{\substack{b_2=\left\lfloor\frac{\n b_1}{\m}+1\right\rfloor \\ b_2 \perp b_1}}^{\n}\frac{b_1}{b_2} =\sum_{b_1=1}^{\m-1}b_1 \left(\sum_{\substack{b_2=1 \\ b_2 \perp b_1}}^{\n}\frac{1}{b_2} - \sum_{\substack{b_2=1 \\ b_2 \perp b_1}}^{\left\lfloor\frac{\n b_1}{\m}\right\rfloor}\frac{1}{b_2}\right)\\
\nonumber&= \sum_{b_1=1}^{\m-1}b_1\left(\frac{\phi(b_1)}{b_1}\log \n +\frac{\phi(b_1)}{b_1} O\left(\frac{1}{\n}\right) - \frac{\phi(b_1)}{b_1}\log \left\lfloor\frac{\n b_1}{\m}\right\rfloor - \frac{\phi(b_1)}{b_1}O\left(\frac{\m}{\n b_1}\right)\right)\\
\nonumber&= \sum_{b_1=1}^{\m-1}\phi(b_1)\left(\log \n - \log \frac{\n b_1}{\m} + O\left(\frac{1}{\n}\right) + O\left(\frac{\m}{\n b_1}\right)\right)\\
\nonumber&= \sum_{b_1=1}^{\m-1}\phi(b_1)\left(\log \m - \log b_1 + O\left(\frac{1}{\n}\right) + O\left(\frac{\m}{\n b_1}\right)\right)\\
\nonumber&= \frac{3}{\pi^2}\m^2\log \m + O(\m \log^2\m) - \frac{3}{\pi^2}\m^2\log \m + \frac{3}{2\pi^2}\m^2 + o(\m^2) + O\left(\frac{\m^2}{\n}\right)\\
\label{eq:b1/b2}
& = \frac{3}{2\pi^2}\m^2 + o(\m^2) + O\left(\frac{\m^2}{\n}\right),
\end{flalign}
and hence
\begin{flalign}
\label{eq:3.3}
\sum_{b_1=1}^{\m-1}\sum_{\substack{b_2=\left\lfloor\frac{\n b_1}{\m}+1\right\rfloor \\ b_2 \perp b_1}}^{\n}\frac{\n^2 b_1}{b_2} = \frac{3}{2\pi^2}\m^2\n^2 + o(\m^2\n^2).
\end{flalign}

\noindent
Finally, combining (\ref{eq:3.0}), (\ref{eq:3.first}), (\ref{eq:3.b_1}), (\ref{eq:m_b2}), and (\ref{eq:3.3}) we derive
\begin{flalign}
\nonumber|Z_2^c(\m,\n)| &= 
\frac{12}{\pi^2}\left(\frac{6}{\pi^2}\m^2\n^2 +\frac{1}{\pi^2}\m^2\n^2
-\frac{2}{\pi^2}\m^2\n^2 
-\frac{3}{2\pi^2}\m^2\n^2
 +  o(\m^2\n^2)\right) \\
\nonumber&= \frac{42}{\pi^4}\m^2\n^2 + o(\m^2\n^2).
\end{flalign}
\end{minipage}

\end{proof}

\noindent
\begin{minipage}{\textwidth}
\subsection{The number of pairs of segments with one corner point}
\label{sec:one_corner_point}

There is no loss of generality in assuming $\{A,B,C,D\} \cap \Vertic(\rMN) = \{A\}$ for every pair $\{AB, CD\} \in Z_{1}(\m,\n)$.
We consider the partition of $Z_1(\m,\n)$ into the following two subsets:

\begin{enumerate}
\item  $Z_1^a(\m,\n)$ the set of those pairs $\{AB,CD\}$ in which the point $B$ is an interior point of $\rMN$;
\item $Z_1^b(\m,\n)$ the set of those pairs $\{AB,CD\}$ in which the point $B$ belongs to the boundary of $\rMN$.
\end{enumerate}
\end{minipage}

\noindent
In the rest of the section we estimate the sizes of these sets in separate lemmas.

\begin{lemma}\label{lem:Z1a}
$
|Z_1^a(\m,\n)| = \frac{72}{\pi^4}\m^2\n^2 + o(\m^2\n^2).
$
\end{lemma}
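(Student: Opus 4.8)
The plan is to fix the unique corner point and reduce the count to a coprime double sum over the second segment, which is then evaluated with the number-theoretic estimates of \cref{sec:numberTheoryPrlim}. Since the four vertices of $\rMN$ are equivalent under the symmetries of the rectangle, I would fix $A=R_1=(0,0)$ and multiply the resulting count by $4$. With $A$ at the origin, $A$ already realizes the minimal $x$- and $y$-coordinates among the four points, so the sides $x=\m$ and $y=\n$ of $\rMN$ must be touched by points of $\{B,C,D\}$. As $B$ is interior, the maxima must be realized by $C$ and $D$; since no point other than $A$ is a vertex of $\rMN$, neither $C$ nor $D$ can coincide with $R_3=(\m,\n)$, and hence one of them lies on the top side and the other on the right side. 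I may therefore assume $C=(c,\n)$ with $1\le c\le\m-1$ and $D=(\m,d)$ with $1\le d\le\n-1$, and the segment $CD$ is prime precisely when $\m-c\perp\n-d$.

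Next, for a fixed prime segment $CD$, I would describe the set of admissible $B$, that is, the interior points $B$ of $\rMN$ for which $AB$ and $CD$ are in convex position. Applying \cref{cor:P+-} with respect to the points $C,D,A$ (so that the fixed segment is $CD$ and the lone point is $A$), the admissible region is the union of two triangles sharing the apex $A$: the triangle $\tr{A}{R_2}{C}$, cut off by the ray $AC$ and the left side, and the triangle $\tr{A}{R_4}{D}$, cut off by the ray $AD$ and the bottom side, where $R_2=(0,\n)$ and $R_4=(\m,0)$. A short computation shows that $\ell(CD)$ meets the right side at $D$ and the top side at $C$ and that both triangles lie on the $A$-side of $\ell(CD)$, so they are exactly the admissible region, with total area
\[
\Area(\tr{A}{R_2}{C})+\Area(\tr{A}{R_4}{D})=\tfrac12 c\n+\tfrac12 \m d .
\]
Since $A$ is the origin and a vertex of each triangle, \cref{cor:S-triangle} counts the interior points $B$ with $AB$ prime as $\frac{6}{\pi^2}\cdot\tfrac12(c\n+\m d)+O(\m+\n)$; the lattice points on the sides of the triangles correspond to non-interior $B$ and contribute only to the error term.

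Finally, summing over all prime segments $CD$ and substituting $p=\m-c$, $q=\n-d$ (so that $c\n+\m d=2\m\n-p\n-\m q$ and primality becomes $p\perp q$), I obtain
\[
|Z_1^a(\m,\n)|=\frac{24}{\pi^2}\sum_{p=1}^{\m-1}\sum_{\substack{q=1\\ q\perp p}}^{\n-1}\left(\m\n-\frac{p\n+\m q}{2}\right)+O\big(\m\n(\m+\n)\big).
\]
The three coprime sums are evaluated with (\ref{eq:sum_coprime}) and (\ref{eq:sum_phi_n_k}): the constant term contributes $\m\n\cdot\frac{6}{\pi^2}\m\n$, while $\sum\sum p$ and $\sum\sum q$ each yield $\frac{3}{\pi^2}$ of a cubic power, so that the two linear terms contribute $\frac{3}{2\pi^2}\m^2\n^2$ apiece. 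Hence the bracketed sum equals $\frac{3}{\pi^2}\m^2\n^2+o(\m^2\n^2)$ and $|Z_1^a(\m,\n)|=\frac{72}{\pi^4}\m^2\n^2+o(\m^2\n^2)$. As in the proof of \cref{lem:Z2c}, the main technical obstacle is controlling the errors of the coprime sums, which I would handle by summing in both orders of $p$ and $q$ and combining the two asymptotically equal estimates to absorb the logarithmic error terms; the other delicate point is the geometric verification that the admissible region for $B$ is exactly the stated pair of triangles.
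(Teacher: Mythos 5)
Your proof is correct, and it reaches the constant $\frac{72}{\pi^4}$ by a genuinely different decomposition from the paper's. The paper fixes the interior point $B$ first (splitting into the two triangles $\tr{A}{R_2}{R_3}$ and $\tr{A}{R_3}{R_4}$), observes that the admissible $D$ then form a sub-segment $XR_3$ of the top side with $X=\ell(AB)\cap R_2R_3$, and ends up with a quadruple coprime sum whose evaluation requires the harmonic-type estimates $\sum\sum b_1/b_2$ already developed for \cref{lem:Z2c}. You instead fix the prime segment $CD$ first (one endpoint in the interior of each far side), identify the admissible $B$ as the interiors of the two triangles $\tr{A}{R_2}{C}$ and $\tr{A}{R_4}{D}$ with common apex $A$, and invoke \cref{cor:S-triangle} to collapse the count over $B$ into $\frac{6}{\pi^2}\cdot\frac12(c\n+\m d)+O(\m+\n)$; this leaves only a double coprime sum in $p=\m-c$, $q=\n-d$ with purely polynomial summands, handled by (\ref{eq:sum_coprime}) and (\ref{eq:sum_phi_n_k}). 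Your route is shorter and reuses the prime-point-in-triangle machinery the paper already deploys for $Z_2^b$, $Z_2^c$, and $Z_1^b$, at the cost of the extra geometric verification that the admissible region for $B$ is exactly those two triangles (which you correctly flag and which does check out: the cone condition from \cref{cor:P+-} excludes precisely the region between the rays $AC$ and $AD$, and the half-plane condition relative to $\ell(CD)$ is automatic). Two minor points to make explicit in a polished write-up: \cref{cor:S-triangle} is stated for a triangle circumscribed by $\rMN$ itself, whereas $\tr{A}{R_2}{C}$ has bounding box $\mathcal{R}_{c,\n}$, so you should note the error is $O(c+\n)=O(\m+\n)$ (the paper does the same silently in \cref{lem:Z2b}); and your error terms $O(\m^2\n\log\n+\m\n^2\log\m)$ are already $o(\m^2\n^2)$ without the both-orders summation trick, so that extra precaution, while harmless, is not actually needed here.
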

\begin{proof}
Due to symmetry, for a corner point $R$ of $\rMN$ the number of pairs $\{AB, CD\} \in Z_1^a(\m,\n)$, where $A$ coincides with 
$R$, is the same for every \linebreak $R \in \{ R_1, R_2, R_3, R_4 \}$. 
Therefore, it is enough to estimate the number of pairs where $A$ coincides with a fixed corner point of $\rMN$,
and we assume that $A = R_1$.

Since $B$ is an interior point of $\rMN$ and neither $C$ nor $D$ is a corner point of $\rMN$, we conclude
that one of $C$ and $D$ belongs to the interior of $R_2R_3$ and the other belongs to the interior of $R_3R_4$.
Without loss of generality, we assume that $C$ is an interior point of $R_3R_4$ and $D$ is an interior point of $R_2R_3$.

Under the above assumptions, we will first estimate the number of pairs in $Z_1^a(\m,\n)$ in which $B = (b_1,b_2)$
belongs to the triangle $AR_2R_3$. Notice that the latter assumption is equivalent to the inequality $\frac{b_1}{b_2} \leq \frac{\m}{\n}$.
\begin{figure}
\centering
\begin{tikzpicture}[scale=1.2]

	\draw [<->,thick] (0,4) node (yaxis) [above] {}
       |- (6,0) node (xaxis) [right] {};
				
	\draw (0,0) -- (5,0);
	\draw (0,0) -- (0,3);
	\draw (0,3) -- (5,3);
	\draw (5,0) -- (5,3);
	\draw (0,0) -- (3,3);
	\draw (0,0) -- (5,3);
	\draw[black,fill=white] (0,0) circle (1.4pt);
	\draw (0,0) node [left] {$A=R_1$};
	\draw [line width=0.5mm](3,3) -- (5,3);
	\draw[black,fill=white] (3,3) circle (1.4pt);
	\draw (3,3) node [above] {$X$};
	\draw[black,fill=white] (4,3) circle (1.4pt);
	\draw (4,3) node [above] {$D$};
	\draw[black,fill=white] (2,2) circle (1.4pt);
	\draw (2,2) node [above] {$B$};
	\draw[black,fill=white] (0,3) circle (1.4pt);
	\draw (0,3) node [left] {$R_2$};
	\draw[black,fill=white] (5,0) circle (1.4pt);
	\draw (5,0) node [below] {$R_4$};
	\draw [line width=0.5mm](5,0) -- (5,3);
	\draw[black,fill=white] (5,1) circle (1.4pt);
	\draw (5,1) node [right] {$C$};
	\draw[black,fill=white] (5,0) circle (1.4pt);
	\draw (5,0) node [below] {$R_4$};
	\draw[black,fill=white] (5,3) circle (1.4pt);
	\draw (5,3) node [right] {$R_3$};
	
\end{tikzpicture}
\caption{The case $B \in \protect\tr{A}{R_2}{R_3}$. 
The points $C$ and $D$ belong to the segments $R_3R_4$ and $XR_3$ respectively, where $X = \protect\ell(AB) \cap R_2R_3$.}
\label{fig:main_5}
\end{figure}
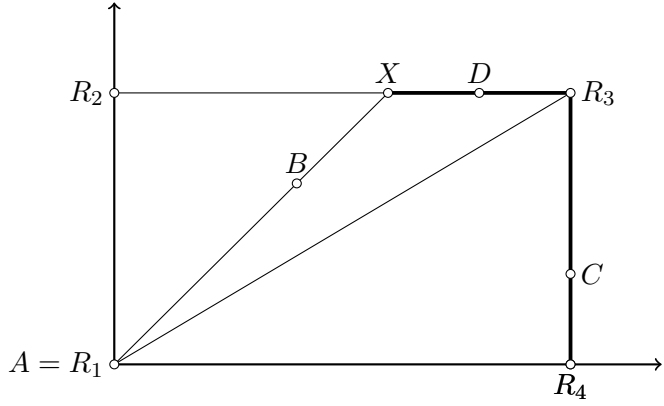
Let us denote 
$$
X =\ell(AB) \cap R_2R_3= \left(\frac{\n b_1}{b_2},\n\right).
$$
It follows from \cref{cor:P+-} that $AB$ and $CD$ are in convex position if and only if $D$ is an interior point of $XR_3$ (see \cref{fig:main_5}).
Therefore, by denoting $D = (d_1, \n)$ and $C = (\m, c_2)$, the number of desired prime pairs segments can be expressed as
\begin{flalign}
\label{eq:5.main}
&\sum_{b_1=1}^{\m-1}\sum_{\substack{b_2=\left\lfloor\frac{\n b_1}{\m} + 1\right\rfloor \\ b_2 \perp b_1}}^{\n-1}\sum_{c_2=1}^{\n-1}\sum_{\substack{d_1=\left\lfloor\frac{\n b_1}{b_2} + 1\right\rfloor \\ (\m - d_1) \perp (\n-c_2) }}^{\m-1}1.
\end{flalign}
We start by estimating the contribution of the latter two sums.
\begin{flalign}
\label{eq:5.1}
\sum_{c_2=1}^{\n-1}\sum_{\substack{d_1=\left\lfloor\frac{\n b_1}{b_2} + 1\right\rfloor \\ (\m - d_1) \perp (\n-c_2) }}^{\m-1}1 &= \sum_{c_2'=1}^{\n}\sum_{\substack{d_1'=1  \\ d_1' \perp (c_2') }}^{\m-\left\lfloor\frac{\n b_1}{b_2}+1\right\rfloor}1 + O(\m)\\
&\nonumber\overset{\mathrm{(\ref{eq:sum_euler_gen})}}{=}
\sum_{c_2'=1}^{\n}\left(\frac{\phi(c_2')}{c_2'}\left(\m - \n\frac{b_1}{b_2} + O(1)\right) + O\left(2^{w\left(c_2'\right)}\right)\right) + O(\m)\\
\nonumber&\overset{\mathrm{(\ref{eq:sum_phi_n_k})}}{=}\left(\frac{6\n}{\pi^2} + O(\log \n)\right)\left(\m - \n\frac{b_1}{b_2} + O(1) \right) +O(\n \log \n) + O(\m)\\
\nonumber&= \frac{6\n}{\pi^2}\left(\m - \n\frac{b_1}{b_2}\right) +O(\m \log \n) + O\left(\frac{b_1}{b_2}\n \log \n\right) + O(\n \log \n)\\
\nonumber&= \frac{6\n}{\pi^2}\left(\m - \n\frac{b_1}{b_2}\right) + O(\n \log \n) +O(\m \log \n).
\end{flalign}
By changing the order of summation in (\ref{eq:5.1}), one can show that 
\begin{flalign}
\nonumber\sum_{c_2=1}^{\n-1}\sum_{\substack{d_1=\left\lfloor\frac{\n b_1}{b_2} + 1\right\rfloor \\ (\m - d_1) \perp (\n-c_2) }}^{\m-1}1 = \frac{6\n}{\pi^2}\left(\m - \n\frac{b_1}{b_2}\right) + O(\m \log \n + \n \log \m).
\end{flalign}
Now, plugging in the above result to (\ref{eq:5.main}) and using formulas (\ref{eq:3.one_joined}) and (\ref{eq:b1/b2}) we obtain:
\begin{flalign*}
&\sum_{b_1=1}^{\m-1}\sum_{\substack{b_2=\left\lfloor\frac{\n b_1}{\m} + 1\right\rfloor \\ b_2 \perp b_1}}^{\n-1}\sum_{c_2=1}^{\n-1}\sum_{\substack{d_1=\left\lfloor\frac{\n b_1}{b_2} + 1\right\rfloor \\ (\m - d_1) \perp (\n-c_2) }}^{\m-1}1\\
&= \sum_{b_1=1}^{\m-1}\sum_{\substack{b_2=\left\lfloor\frac{\n b_1}{\m} + 1\right\rfloor \\ b_2 \perp b_1}}^{\n-1}\left(\frac{6\n}{\pi^2}\left(\m - \n\frac{b_1}{b_2}\right) + O(\m \log \n + \n \log \m)\right)\\
\nonumber&= \frac{6\n}{\pi^2}\left(\m\left(\frac{3}{\pi^2}\m\n + O(\m \log \n + \n \log \m)\right)- \n\left(\frac{3}{2\pi^2}\m^2 + o(\m^2) + O\left(\frac{\m^2}{\n}\right)\right)\right)\\
&+ O(\m^2\n \log \n + \m \n^2 \log \m)\\
\nonumber&=\frac{9}{\pi^4}\m^2\n^2 + \n^2o(\m^2) + O(\m^2\n \log \n + \m \n^2 \log \m).
\end{flalign*}

Note that the obtained estimation is symmetric with respect to $\m$ and $\n$, which implies that 
the number of pairs in $Z_1^a(\m,\n)$ in which $B$ belongs to $\tr{A}{R_3}{R_4}$ has the same asymptotics.
Therefore, taking into account additionally all symmetric cases corresponding to the location of $A$, we finally conclude that 
\begin{equation}
	\nonumber |Z_1^a(\m,\n)| = \frac{72}{\pi^4}\m^2\n^2 + o(\m^2\n^2).
\end{equation}

\end{proof}

\begin{lemma}\label{lem:Z1b}
$$
|Z_1^b(\m,\n)| = \frac{6\n^2}{\pi^2}\sum_{\substack{b_1=1 \\ b_1 \perp \n}}^{\m}\left(2\m - b_1\right) + \frac{6\m^2}{\pi^2}\sum_{\substack{c_2=1 \\ c_2 \perp \m}}^{\n}\left(2\n - c_2\right) + O(\m^2\n+\m\n^2).
$$
\end{lemma}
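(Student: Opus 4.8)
The plan is to fix the unique corner point as $A$ and organize the count by the side of $\rMN$ on which $B$ lies, reusing the admissible-region description of \cref{cor:P+-} and the prime-point estimate of \cref{cor:S-triangle}. By the reflection symmetries of the rectangle (which permute the four corners and preserve $Z_1^b(\m,\n)$ setwise), every choice of corner for $A$ contributes equally, so it suffices to count the pairs with $A = R_1 = (0,0)$ and multiply by $4$. I then split these pairs according to the side containing $B$. If $B$ lies on one of the two sides through $A$, then $AB$ is collinear with a side of $\rMN$ and primality forces $B \in \{(0,1),(1,0)\}$; together with the $O(\m\n)$ choices for the remaining segment this yields only $O(\m\n)$ pairs, which are absorbed into the error term. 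Thus the two essential cases are $B$ on the opposite horizontal side $R_2R_3$ and $B$ on the opposite vertical side $R_3R_4$; they are interchanged by the reflection swapping $\m$ and $\n$ and produce the first and second summand respectively.

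For $B = (b_1,\n)$ on the top side, primality of $AB$ is equivalent to $b_1 \perp \n$. Since $\rMN$ is circumscribed, the right side $R_3R_4$ must meet $\Conv(\{A,B,C,D\})$, so exactly one of $C,D$ lies on it; I label that point $C = (\m, r)$ and leave $D$ free (the degenerate possibility that both lie on $R_3R_4$ forces $CD$ along the right side and contributes only $O(\m\n)$). With $A, B, C$ fixed, \cref{cor:P+-} describes the admissible region for $D$ as a union of two triangles with common apex $C$; intersecting with $\rMN$ these are $\tr{C}{A}{R_4}$ and $\tr{C}{B}{R_3}$, with
\[
\Area(\tr{C}{A}{R_4}) = \tfrac12\,\m r, \qquad \Area(\tr{C}{B}{R_3}) = \tfrac12\,(\n - r)(\m - b_1).
\]
Because $D$ is counted as an internal point of these triangles, it never falls on the right or the top side, so the labeling above is unambiguous. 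By \cref{cor:S-triangle} (applied to each triangle with apex $C$ and a circumscribing rectangle of the triangle, so that the error stays $O(\m+\n)$), the number of admissible $D$ with $CD$ prime equals $\tfrac{6}{\pi^2}$ times the total area plus $O(\m+\n)$. Summing over $r \in \{1,\dots,\n-1\}$ then collapses the areas to $\tfrac{\n^2}{4}(2\m - b_1)$, with accumulated error $O(\m\n + \n^2)$ for each fixed $b_1$.

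Summing over $b_1 \perp \n$ and invoking the estimates of \cref{sec:numberTheoryPrlim} for the error, the contribution of this case is $\tfrac{6\n^2}{4\pi^2}\sum_{b_1 \perp \n}(2\m - b_1) + O(\m^2\n + \m\n^2)$; the symmetric treatment of $B$ on the right side gives $\tfrac{6\m^2}{4\pi^2}\sum_{c_2 \perp \m}(2\n - c_2) + O(\m^2\n+\m\n^2)$. Adding these and multiplying by $4$ for the four corners yields the claimed formula. I expect the main obstacle to be the geometric bookkeeping: cutting the admissible region of \cref{cor:P+-} down to precisely the two triangles $\tr{C}{A}{R_4}$ and $\tr{C}{B}{R_3}$ inside $\rMN$, and checking that no pair is lost or double-counted at the boundary (the collinear cases, a free point on a side, and both of $C,D$ on the far side). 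The clean telescoping of the $r$-sum into $\tfrac{\n^2}{4}(2\m - b_1)$ hinges on getting these two areas exactly right.
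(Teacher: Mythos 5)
Your proposal is correct and follows essentially the same route as the paper: fix $A$ at a corner and multiply by four, discard the $B$-on-adjacent-side and degenerate cases into the error term, place $B$ on an opposite side and $C$ on the remaining far side, describe the admissible region for $D$ via \cref{cor:P+-} as the two triangles $\tr{A}{C}{R_4}$ and $\tr{B}{C}{R_3}$, and count prime segments $CD$ there with \cref{lem:S_triangle}/\cref{cor:S-triangle} before summing over $c_2$ and $b_1$. The areas, the collapse of the inner sum to $\tfrac{\n^2}{4}(2\m-b_1)$, and the error bookkeeping all match the paper's computation.
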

\begin{proof}
As in the proof of \cref{lem:Z1a}, 
due to symmetry, for a corner point $R$ of $\rMN$ the number of pairs $\{AB, CD\} \in Z_1^b(\m,\n)$, where $A$ coincides with 
$R$, is the same for every $R \in \{ R_1, R_2, R_3, R_4 \}$. Therefore, it is enough to estimate the number of pairs where $A$ 
coincides with a fixed corner point of $\rMN$, and we assume that $A = R_1$.

It is easy to see that if $B$ is an internal point of $R_1R_2$ or $R_1R_4$, then one of $C$ and $D$ belongs to the interior
of $R_2R_3$ and the other belongs to the interior of $R_3R_4$. Therefore, taking into account primality of $AB$,
the number of pairs, in which $B$ is an internal point of $R_1R_2$ or $R_1R_4$, is $O(\m\n)$. The latter does not 
affect the asymptotics, and without loss of generality we assume from now on that $B$ is an internal point of one of 
the sides $R_2R_3$ and $R_3R_4$.

Suppose first that $B$ is an internal point of $R_2R_3$, i.e.  $B = (b_1,\n)$ for some $0 < b_1 < \m$, and $b_1 \perp \n$.
Then $AB \cap R_3R_4 = \emptyset$, and hence $CD \cap R_3R_4 \neq \emptyset$, which implies that
either $C$ or $D$ belongs to the interior of $R_3R_4$.
\begin{figure}
\centering
\begin{tikzpicture}[scale=1.2]

	\draw [<->,thick] (0,4) node (yaxis) [above] {}
       |- (6,0) node (xaxis) [right] {};
				
	\draw (0,0) -- (5,0);
	\draw (0,0) -- (0,3);
	\draw (0,3) -- (5,3);
	\draw (5,0) -- (5,3);
	\draw (0,0) -- (3,3);
	\draw[black,fill=light_grey] (0,0) -- (5,1) -- (5,0);
	\draw[black,fill=light_grey] (3,3) -- (5,3) -- (5,1);
	\draw (5,1) -- (3,3);
	\draw [line width=0.5mm](0,3) -- (5,3);
	\draw [line width=0.5mm](5,0) -- (5,3);
	\draw[black,fill=white] (0,0) circle (1.4pt);
	\draw (0,0) node [left] {$A=R_1$};
	\draw[black,fill=white] (3,3) circle (1.4pt);
	\draw (3,3) node [above] {$B$};
	\draw[black,fill=white] (0,3) circle (1.4pt);
	\draw (0,3) node [left] {$R_2$};
	\draw[black,fill=white] (5,3) circle (1.4pt);
	\draw (5,3) node [right] {$R_3$};
	\draw[black,fill=white] (5,0) circle (1.4pt);
	\draw (5,0) node [below] {$R_4$};
	\draw[black,fill=white] (5,1) circle (1.4pt);
	\draw (5,1) node [right] {$C$};
	\draw[black,fill=white] (5,0) circle (1.4pt);
	\draw (5,0) node [below] {$R_4$};
	
\end{tikzpicture}
\caption{The points $B$ and $C$ belong to the segments $R_2R_3$ and $R_3R_4$ respectively, the grey triangles form the admissible area for $D$.}
\label{fig:main_2}
\end{figure}
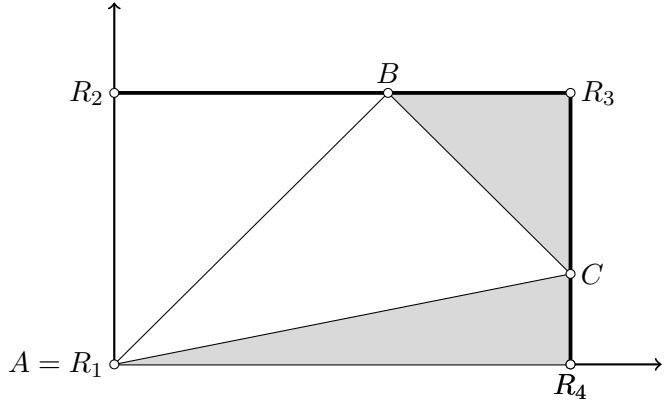
Without loss of generality we assume the former, i.e. $C = (\m, c_2)$ for some $0 < c_2 < \n$ (see \cref{fig:main_2}).
Under these assumptions, \cref{cor:P+-} implies that $AB$ and $CD$ are in convex position if and only if the point $D$ belongs to 
$\tr{B}{C}{R_3} \setminus (BC \cup \{R_3\})$ or $\tr{A}{C}{R_4} \setminus (AC \cup \{R_4\})$.
Therefore, using \cref{lem:S_triangle} and \cref{cor:S-triangle}, we conclude that the number of such pairs of prime segments is
\begin{flalign*}
\nonumber&\sum_{\substack{b_1=1 \\ b_1 \perp \n}}^{\m-1}\sum_{c_2=1}^{\n-1}\frac{6}{\pi^2}\left(\Area(\tr{B}{C}{R_3}) + \Area(\tr{A}{C}{R_4}) + O(\m+\n)\right)\\
\nonumber&= \frac{3}{\pi^2}\sum_{\substack{b_1=1 \\ b_1 \perp \n}}^{\m}\sum_{c_2=1}^{\n}((\m-b_1)(\n-c_2) + \m c_2) + O(\m^2\n + \m\n^2)\\
\nonumber&= \frac{3}{\pi^2}\sum_{\substack{b_1=1 \\ b_1 \perp \n}}^{\m}\sum_{c_2=1}^{\n}(\m\n - \n b_1 + b_1c_2 )  + O(\m^2\n + \m\n^2)\\
\nonumber&= \frac{3}{\pi^2}\sum_{\substack{b_1=1 \\ b_1 \perp \n}}^{\m}\left((\m\n - \n b_1)\n + b_1\left(\frac{\n^2}{2} + O(\n)\right)\right) + O(\m^2\n + \m\n^2)\\
&= \frac{3\n^2}{\pi^2}\sum_{\substack{b_1=1 \\ b_1 \perp \n}}^{\m}\left(\m - \frac{b_1}{2}\right) + O(\m^2\n + \m\n^2).
\end{flalign*}

By symmetry, the number of pairs in which $B$ is an internal point of $R_3R_4$ is
$$
\frac{3\m^2}{\pi^2}\sum_{\substack{c_2=1 \\ c_2 \perp \m}}^{\n}\left(\n - \frac{c_2}{2}\right) + O(\m^2\n + \m\n^2).
$$

Putting all together and taking into account the symmetric cases corresponding to the location of $A$, we finally conclude that 
\begin{equation}
\nonumber |Z_1^b(\m,\n)| = \frac{6\n^2}{\pi^2}\sum_{\substack{b_1=1 \\ b_1 \perp \n}}^{\m}\left(2\m - b_1\right) + \frac{6\m^2}{\pi^2}\sum_{\substack{c_2=1 \\ c_2 \perp \m}}^{\n}\left(2\n - c_2\right) + O(\m^2\n + \m\n^2).
\end{equation}
\end{proof}

We note that in \cref{lem:Z1b} we deliberately did not compute a closed-form asymptotic, as the
obtained formula will be crucial later to obtain a better error term.

\subsection{The number of pairs of segments with no corner points}
\label{sec:0points}

In this section we estimate the size of $Z_0(\m,\n)$, i.e. the number of those pairs of segments in $Z(\m,\n)$
none of whose endpoints is a corner of $\rMN$.

\begin{lemma}\label{lem:Z0}
$
|Z_0(\m,\n)| = \frac{72}{\pi^4}\m^2\n^2 + O(\m^2 \n \log \n).
$
\end{lemma}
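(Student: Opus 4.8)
The plan is to pin down the rigid geometric shape of every configuration in $Z_0(\m,\n)$ and then reduce the count to a product of coprime-pair counts handled by \eqref{eq:sum_coprime}. Label the corners of $\rMN$ by $R_1=(0,0)$, $R_2=(0,\n)$, $R_3=(\m,\n)$, $R_4=(\m,0)$. First I would prove that for every pair $\{AB,CD\}\in Z_0(\m,\n)$ exactly one of the four points $A,B,C,D$ lies in the relative interior of each side of $\rMN$. Indeed, since $\rMN$ is circumscribed about $\Conv(\{A,B,C,D\})$, each of the extreme values $x=0$, $x=\m$, $y=0$, $y=\n$ is attained by some vertex; no vertex is a corner by the definition of $Z_0(\m,\n)$; and a straight line meets the boundary of $\rMN$ in at most two points, so no side can carry two of the four points and no three of the four points can be collinear. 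A short counting argument then forces exactly one point into the interior of each side, and, being four pairwise non-collinear points on the boundary of a convex region listed in boundary order, they automatically form a convex quadrilateral whose counterclockwise vertex sequence is (bottom, right, top, left).

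Because $AB$ and $CD$ are in convex position, they are by definition the two opposite edges of this quadrilateral, so $Z_0(\m,\n)$ splits into two disjoint classes: Type~1, where one segment joins the bottom point to the right point and the other joins the top point to the left point; and Type~2, where the segments join the right point to the top point and the left point to the bottom point. The crucial observation is that the two segments may be chosen completely independently: given any prime segment from an interior point of the bottom side to an interior point of the right side and any prime segment from an interior point of the top side to an interior point of the left side, the four endpoints have bounding box exactly $\rMN$ (so $\rMN$ is circumscribed), no endpoint is a corner, and the two segments are opposite edges of the resulting convex quadrilateral, hence in convex position. Thus the Type~1 configurations are in bijection with such pairs of segments, and the number of Type~1 pairs is the product of the number of prime bottom--right segments and the number of prime top--left segments; the same holds for Type~2.

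It remains to count prime segments between two prescribed sides. A segment with integer endpoints is prime exactly when its two coordinate differences are coprime; for instance the segment from $(a_1,0)$ to $(\m,b_2)$ is prime iff $(\m-a_1)\perp b_2$. Running $a_1$ over $\{1,\dots,\m-1\}$ and $b_2$ over $\{1,\dots,\n-1\}$, each of the four relevant counts equals the single coprime-pair count
$$
N \;=\; \#\{(p,q): 1\le p\le \m-1,\ 1\le q\le \n-1,\ p\perp q\},
$$
so that $|Z_0(\m,\n)| = 2N^2$. Finally \eqref{eq:sum_coprime} gives $N=\frac{6}{\pi^2}\m\n+O(\m\log\n)$, and on squaring the dominant error comes from the cross term $\m\n\cdot O(\m\log\n)$, yielding
$$
|Z_0(\m,\n)| = 2N^2 = \frac{72}{\pi^4}\m^2\n^2 + O(\m^2\n\log\n).
$$

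Unlike \cref{lem:Z2c}, \cref{lem:Z1a}, and \cref{lem:Z1b}, this case needs no two-dimensional admissible region and no appeal to \cref{lem:S_triangle}, because all four endpoints are confined to the boundary of $\rMN$; once the structure is established the arithmetic is essentially a one-line application of \eqref{eq:sum_coprime}. Accordingly, I expect the only genuinely delicate step to be the structural claim of the first two paragraphs --- in particular that convex position is automatic and that the two segments decouple --- and the entire argument hinges on the elementary fact that a line meets at most two sides of $\rMN$, which at once rules out three collinear endpoints and two endpoints on a common side.
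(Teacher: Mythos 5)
Your proof is correct and follows essentially the same route as the paper's: one endpoint in the relative interior of each side, endpoints of each segment on adjacent sides giving two symmetric configurations, factorization of the count into a product of two independent coprime-pair counts, and an application of \eqref{eq:sum_coprime} yielding $2\left(\frac{6}{\pi^2}\m\n+O(\m\log\n)\right)^2$. You are in fact somewhat more explicit than the paper about the structural steps (why no side carries two points and why convex position is automatic), but the substance is identical.
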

\begin{proof}
Let $\{AB, CD\}$ be an arbitrary pair in $Z_0(\m, \n)$. 
The fact that none of the points $A, B, C,$ and $D$ is a corner of $\rMN$ implies
that each of the sides of $\rMN$ contains exactly one of these points. 
Furthermore, since $AB$ and $CD$ are in convex position, 
we conclude that the endpoints of the same segment belong to the adjacent sides of $\rMN$.
Therefore, without loss of generality we can assume $A \in R_1R_2$ and $C \in R_3R_4$, in which case
either $B \in R_2R_3$ and $D \in R_1R_4$, or $B \in R_1R_4$ and $D \in R_2R_3$.
The two cases are symmetric and we assume the former one, i.e. $B \in R_2R_3, D \in R_1R_4$ (see \cref{fig:main_1}).
Let us denote $A = (0, a_2), C = (\m, c_2), B = (b_1, \n),$ and $D = (d_1, 0)$.

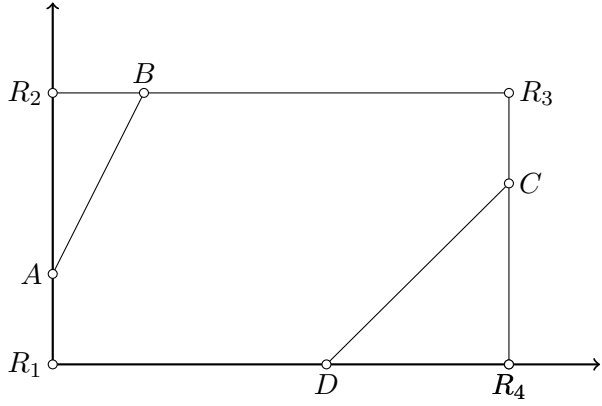
\begin{figure}
\centering
\begin{tikzpicture}[scale=1.2]

	\draw [<->,thick] (0,4) node (yaxis) [above] {}
       |- (6,0) node (xaxis) [right] {};
				
	\draw (0,0) -- (5,0);
	\draw (0,0) -- (0,3);
	\draw (0,3) -- (5,3);
	\draw (5,0) -- (5,3);
	\draw (0,1) -- (1,3);
	\draw (5,2) -- (3,0);
	\draw[black,fill=white] (0,0) circle (1.4pt);
	\draw (0,0) node [left] {$R_1$};
	\draw[black,fill=white] (0,1) circle (1.4pt);
	\draw (0,1) node [left] {$A$};
	\draw[black,fill=white] (1,3) circle (1.4pt);
	\draw (1,3) node [above] {$B$};
	\draw[black,fill=white] (5,2) circle (1.4pt);
	\draw (5,2) node [right] {$C$};
	\draw[black,fill=white] (3,0) circle (1.4pt);
	\draw (3,0) node [below] {$D$};
	\draw[black,fill=white] (0,3) circle (1.4pt);
	\draw (0,3) node [left] {$R_2$};
	\draw[black,fill=white] (5,3) circle (1.4pt);
	\draw (5,3) node [right] {$R_3$};
	\draw[black,fill=white] (5,0) circle (1.4pt);
	\draw (5,0) node [below] {$R_4$};
	\draw[black,fill=white] (5,0) circle (1.4pt);
	\draw (5,0) node [below] {$R_4$};
	
\end{tikzpicture}
\caption{Each of $A$, $B$, $C$, and $D$ belongs to a unique side of $\rMN$. 
The endpoints of the same segment belong to the adjacent sides of $\rMN$.}
\label{fig:main_1}
\end{figure}
Under the above assumptions the segments $AB$ and $CD$ are prime if and only if $(\n-a_2) \perp b_1$ and $(\m-d_1) \perp c_2$, and the number of such pairs is
\begin{flalign}
\nonumber\sum_{a_2 = 1}^{\n-1}\sum_{\substack{b_1 = 1 \\ b_1 \perp (\n-a_2)}}^{\m-1}\sum_{c_2 = 1}^{\n-1}\sum_{\substack{d_1 = 1 \\ (\m-d_1) \perp c_2}}^{\m-1}1 = \sum_{a'_2 = 1}^{\n-1}\sum_{\substack{b_1 = 1 \\ b_1 \perp a'_2}}^{\m-1}\sum_{c_2 = 1}^{\n-1}\sum_{\substack{d'_1 = 1 \\ d'_1 \perp c_2}}^{\m-1}1.
\end{flalign}

Using formula (\ref{eq:sum_coprime}) we obtain
\begin{flalign}
\nonumber\sum_{a'_2 = 1}^{\n-1}\sum_{\substack{b_1 = 1 \\ b_1 \perp a'_2}}^{\m-1}\sum_{c_2 = 1}^{\n-1}\sum_{\substack{d'_1 = 1 \\ d'_1 \perp c_2}}^{\m-1}1 &= \sum_{a'_2 = 1}^{\n-1}\sum_{\substack{b_1 = 1 \\ b_1 \perp a'_2}}^{\m-1}\left(\frac{6}{\pi^2}\m\n + O(\m \log \n)\right)
=\frac{36}{\pi^4}\m^2\n^2 + O(\m^2 \n \log \n).
\end{flalign}

Finally, taking into account the symmetric case of $B \in R_1R_4$ and $D \in R_2R_3$, we derive the desired result
\begin{equation}
\nonumber|Z_0(\m,\n)| = \frac{72}{\pi^4}\m^2\n^2 + O(\m^2 \n \log \n).
\end{equation}
\end{proof}

\subsection{Summarizing results}

In the following theorem we prove the main result of the paper by putting everything together.

\begin{theorem}
\label{th:s_general_formula}
$$
p(m,n) = \frac{25}{12\pi^4}m^4n^4 + o(m^4n^4).
$$
\end{theorem}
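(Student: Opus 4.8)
The plan is to insert the values of the $|Z_i(u,v)|$ found in \cref{lem:Z34,lem:Z2a,lem:Z2b,lem:Z2c,lem:Z1a,lem:Z1b,lem:Z0} into the decomposition (\ref{eq:sumZ}) and then evaluate the weighted double sum (\ref{eq:total_1}). I would group the resulting terms by their behaviour. \cref{lem:Z0}, \cref{lem:Z1a}, and \cref{lem:Z2c} each contribute a clean leading term $\tfrac{c}{\pi^4}u^2v^2$ with $c\in\{72,72,42\}$; \cref{lem:Z2b} contributes $\tfrac1{\pi^2}u^2v^2$ but only over coprime pairs $u\perp v$; \cref{lem:Z1b} contributes a pair of coprimality-restricted sums; and the remaining pieces from \cref{lem:Z34} and \cref{lem:Z2a} are $O(u^2v+uv^2)$ and feed only the error term, since after the weighting $(m-u)(n-v)$ they sum to $O(m^4n^3+m^3n^4)=o(m^4n^4)$.

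For the unconditional $u^2v^2$ terms the evaluation is routine. By (\ref{eq:sum_powers}), $\sum_{u=1}^m(m-u)u^2=\tfrac{m^4}{12}+O(m^3)$ and symmetrically $\sum_{v=1}^n(n-v)v^2=\tfrac{n^4}{12}+O(n^3)$, so a term $\tfrac{c}{\pi^4}u^2v^2$ contributes $\tfrac{c}{144\pi^4}m^4n^4+o(m^4n^4)$. Adding $c=72+72+42=186$ gives the combined contribution $\tfrac{186}{144\pi^4}m^4n^4=\tfrac{31}{24\pi^4}m^4n^4$.

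The coprimality-restricted terms carry the real work. For \cref{lem:Z2b} I would resolve the inner coprime sum with (\ref{eq:sum_euler_gen}), writing $\sum_{v\le n,\,v\perp u}(n-v)v^2=\tfrac{\phi(u)}{u}\tfrac{n^4}{12}+O(n^32^{w(u)})$, which reduces the outer sum to $\tfrac{n^4}{12\pi^2}\sum_{u=1}^m(m-u)u\,\phi(u)$; then (\ref{eq:sum_phi_n_k}) yields $\sum_{u=1}^m(m-u)u\,\phi(u)=\tfrac1{2\pi^2}m^4+O(m^3\log m)$, so \cref{lem:Z2b} contributes $\tfrac1{24\pi^4}m^4n^4$. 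The same reduction applies to each summand of \cref{lem:Z1b}: the inner coprime sum evaluates as $\sum_{b_1\le u,\,b_1\perp v}(2u-b_1)=\tfrac{\phi(v)}{v}\tfrac{3u^2}{2}+O(u2^{w(v)})$, turning the first summand into $\tfrac9{\pi^2}\bigl(\sum_u(m-u)u^2\bigr)\bigl(\sum_v(n-v)v\,\phi(v)\bigr)=\tfrac3{8\pi^4}m^4n^4$ and the second into its mirror image, for a total $\tfrac3{4\pi^4}m^4n^4=\tfrac{18}{24\pi^4}m^4n^4$. Summing the three groups gives
\[
\frac{31+1+18}{24\pi^4}m^4n^4=\frac{50}{24\pi^4}m^4n^4=\frac{25}{12\pi^4}m^4n^4 ,
\]
as claimed.

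The main obstacle is to show that the error terms in the coprime sums are genuinely $o(m^4n^4)$, that is, without any assumption on the ratio $m/n$. The difficulty is that resolving a coprime sum in a fixed order leaves a factor $2^{w(\cdot)}$ attached to one variable, and after weighting and summing this produces an \emph{asymmetric} bound such as $O(m^3n^4\log n)$, which fails to be $o(m^4n^4)$ when one dimension grows much faster than the other. This is exactly why \cref{lem:Z1b} was deliberately left in its un-simplified coprime-sum form. Mimicking the device used in \cref{lem:Z2c}, I would evaluate each coprime sum in both orders of summation --- for the summands of \cref{lem:Z1b} this means interchanging the internal variable ($b_1$ or $c_2$) with the coprimality modulus before applying (\ref{eq:sum_euler_gen}), so that the $2^{w(\cdot)}$ factor lands on whichever variable is convenient, and for \cref{lem:Z2b} it means simply swapping the outer sums over $u$ and $v$. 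This yields the two bounds $O(m^3n^4\log n)$ and $O(m^4n^3\log m)$, of which, writing $\alpha=\max(m,n)$ and $\beta=\min(m,n)$, the smaller is $O(\alpha^3\beta^4\log\beta)=o(m^4n^4)$ since $\log\beta=o(\alpha)$. Checking that every error channel collapses in this way, using (\ref{eq:sum_w}) to control the aggregated $2^{w(\cdot)}$ contributions, is the bookkeeping-heavy heart of the argument; the leading-order arithmetic above is comparatively mechanical.
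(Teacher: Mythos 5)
Your proposal is correct and follows essentially the same route as the paper: the same grouping of the lemmas (the three unconditional $u^2v^2$ terms summing to $\tfrac{186}{144\pi^4}=\tfrac{31}{24\pi^4}$, then $Z_2^b$ and $Z_1^b$ handled via (\ref{eq:sum_euler_gen}) and (\ref{eq:sum_phi_n_k})), the same arithmetic ($\tfrac{1}{24\pi^4}+\tfrac{31}{24\pi^4}+\tfrac{3}{4\pi^4}=\tfrac{25}{12\pi^4}$), and the same device of evaluating each coprimality-restricted sum in both orders and taking the smaller of the two asymmetric error bounds, which is exactly how the paper's appendix disposes of the $2^{w(\cdot)}$ contributions.
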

\begin{proof}
First, using (\ref{eq:sumZ}) and Lemmas \ref{lem:Z34} and \ref{lem:Z2a}, we expand formula (\ref{eq:total_1}) as follows:
\begin{flalign*}
\nonumber & p(m,n) &= \sum_{\m=1}^{m-1}(m-\m)\sum_{\n=1}^{n-1}(n-\n) &\cdot |Z(\m,\n)|&\\
\nonumber & &=\sum_{\m=1}^{m-1}(m-\m)\sum_{\n=1}^{n-1}(n-\n)&\cdot\Big(|Z_4(\m,\n)| + |Z_3(\m,\n)| + |Z_2(\m,\n)| &\\
\nonumber & & & + |Z_1(\m,\n)| + |Z_0(\m,\n)|\Big) &\\
\nonumber & &=\sum_{\m=1}^{m-1}(m-\m)\sum_{\n=1}^{n-1}(n-\n)&\cdot\Big(|Z_2^b(\m,\n)| + |Z_2^c(\m,\n)| + |Z_1^a(\m,\n)| &\\
\nonumber & & &+ |Z_1^b(\m,\n)| + |Z_0(\m,\n)|\Big) + o(m^4n^4).&
\end{flalign*}

Next,  different parts of the above sum can be estimated separately, as 
follows\footnote{Due to their simplicity and technicality, the proofs are excluded from the main part of the paper and included in the appendix.}.

\medskip
\noindent 
\begin{flalign}
\label{eq:final_1_1}
\sum_{\m=1}^{m-1}(m-\m)\sum_{\substack{\n=1}}^{n-1}(n-\n) \cdot |Z_2^b(\m,\n)| = \frac{1}{24\pi^4}m^4n^4 + o(m^4n^4).
\end{flalign}

\begin{flalign}
\label{eq:final_Z2_1_0_1}
\sum_{\m=1}^{m-1}(m-\m)\sum_{\n=1}^{n-1}(n-\n)\Big( |Z_2^c(\m,\n)| + |Z_1^a(\m,\n)| + |Z_0(\m,\n)| \Big) \ = \ \frac{31}{24\pi^4}m^4n^4 + o(m^4n^4).
\end{flalign}

\begin{flalign}
\label{eq:big_sum_estimation_1}
\sum_{\m=1}^{m-1}(m-\m)\sum_{\n=1}^{n-1}(n-\n) \cdot |Z_1^b(\m,\n)| = \frac{3}{4\pi^4}m^4n^4+ o(m^4n^4).
\end{flalign}

Finally, plugging in (\ref{eq:final_1_1}), (\ref{eq:final_Z2_1_0_1}), and (\ref{eq:big_sum_estimation_1}) into the initial formula we obtain
$$
p(m,n) = \frac{1}{24\pi^4}m^4n^4 + \frac{31}{24\pi^4}m^4n^4 +  \frac{3}{4\pi^4}m^4n^4 + o(m^4n^4)= \frac{25}{12\pi^4}m^4n^4 + o(m^4n^4).
$$
\end{proof}

The following corollary from \cref{th:t_2=p}, \cref{cor:t_2-p}, and \cref{th:s_general_formula} reveal the relations between $2$-threshold functions, proper pairs of segments, and pairs of prime segments in convex position.

\begin{corollary}\label{cor:bijection_1}
	The number of 2-threshold functions is asymptotically equivalent to 
	the number of proper pairs of segments and to the number of pairs of prime segments in convex position, i.e., 
	$$
		t_2(m,n) = (1 + o(1)) q(m,n)
	$$
	and 
	$$
		t_2(m,n) = (1 + o(1)) p(m,n).
	$$
\end{corollary}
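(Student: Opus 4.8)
The plan is to assemble the corollary directly from the three preceding results---Theorem~\ref{th:s_general_formula}, Corollary~\ref{cor:t_2-p}, and Theorem~\ref{th:t_2=p}---by comparing each error term against the leading order $m^4n^4$; no new combinatorial work is needed. The single quantitative input I would extract first is that Theorem~\ref{th:s_general_formula} gives $p(m,n) = \frac{25}{12\pi^4}m^4n^4 + o(m^4n^4)$, and hence $p(m,n) = \Theta(m^4n^4)$. This is precisely what converts any additive $o(m^4n^4)$ error into a multiplicative factor $(1+o(1))$.

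For the second equivalence, $t_2(m,n) = (1+o(1))p(m,n)$, I would simply invoke Corollary~\ref{cor:t_2-p}, which states $t_2(m,n) = p(m,n) + o(m^4n^4)$. Since $p(m,n) = \Theta(m^4n^4)$, the error term $o(m^4n^4)$ is $o(p(m,n))$, so $t_2(m,n) = p(m,n)(1 + o(1))$, as required.

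For the first equivalence, $t_2(m,n) = (1+o(1))q(m,n)$, I would start from Theorem~\ref{th:t_2=p}, i.e.\ $t_2(m,n) = q(m,n) + O(m^2n^2(m+n)^2)$. The only genuine (if routine) check is that $O(m^2n^2(m+n)^2) = o(m^4n^4)$; this follows from $\frac{m^2n^2(m+n)^2}{m^4n^4} = \frac{1}{n^2} + \frac{2}{mn} + \frac{1}{m^2} \to 0$ as $m,n \to \infty$. Thus $t_2(m,n) = q(m,n) + o(m^4n^4)$, and combining this with the relation $t_2(m,n) = p(m,n) + o(m^4n^4)$ from Corollary~\ref{cor:t_2-p} also yields $q(m,n) = p(m,n) + o(m^4n^4) = \Theta(m^4n^4)$. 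Hence the error $o(m^4n^4)$ is again $o(q(m,n))$, and the equivalence $t_2(m,n) = q(m,n)(1+o(1))$ follows.

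The closest thing to an obstacle is purely bookkeeping: the bound $O(m^2n^2(m+n)^2) = o(m^4n^4)$ is valid precisely because both $m$ and $n$ tend to infinity, which is the asymptotic regime in force throughout the paper. Once $p(m,n) = \Theta(m^4n^4)$ has been established via Theorem~\ref{th:s_general_formula}, every remaining step is immediate.
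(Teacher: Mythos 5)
Your proposal is correct and follows exactly the route the paper intends: the paper states that the corollary follows from Theorem~\ref{th:t_2=p}, Corollary~\ref{cor:t_2-p}, and Theorem~\ref{th:s_general_formula}, and your argument simply spells out the (correct) bookkeeping that $p(m,n)=\Theta(m^4n^4)$ converts the additive $o(m^4n^4)$ errors into multiplicative $(1+o(1))$ factors.
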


The above corollary implies \cref{th:t_k}.

\section{On the number of $k$-threshold functions for $k \geq 3$}
\label{sec:k-threshold-number}

The obtained asymptotic formula for the number of $2$-threshold functions can be used to improve upper bound (\ref{eq:k_trivial_upper_bound}) on the number of $k$-threshold functions for $k \geq 3$.
Indeed, since a $k$-threshold function can be seen as a conjunction of several $2$-threshold functions and at most one threshold function, we have:
\begin{flalign}
\nonumber t_k(m,n) \leq \displaystyle{t_2(m,n) \choose \frac{k}{2}} & = \frac{t_2(m,n)^{\frac{k}{2}}}{\frac{k}{2}!} + o\left(m^{2k}n^{2k}\right) \\
\label{eq:odd_k}
& = \frac{5^k}{12^{\frac{k}{2}}\pi^{2k}\frac{k}{2}!}m^{2k}n^{2k} + o\left(m^{2k}n^{2k}\right)
\end{flalign}
for even $k$ and
\begin{equation}
\label{eq:even_k}
\begin{array}{ll}
t_k(m,n) &\leq \displaystyle{t_2(m,n) \choose \lfloor{\frac{k}{2}}\rfloor}\frac{t(m,n)}{k} = \frac{t_2(m,n)^{\lfloor\frac{k}{2}\rfloor}t(m,n)}{\lfloor{\frac{k}{2}}\rfloor!k} + o\left(m^{2k}n^{2k}\right) \\
& = \displaystyle\frac{5^{k-1}6}{12^{\lfloor\frac{k}{2}\rfloor}\pi^{2k}\lfloor\frac{k}{2}\rfloor!k}m^{2k}n^{2k} + o\left(m^{2k}n^{2k}\right)
\end{array}
\end{equation}
for odd $k$.
Since for even $k$
$$
\frac{5^k}{12^{\frac{k}{2}}\pi^{2k}\frac{k}{2}!} < \frac{6^k}{\pi^{2k}k!}
$$
if and only if $k \leq 22$, and for odd $k$
$$
\frac{5^{k-1}6}{12^{\lfloor\frac{k}{2}\rfloor}\pi^{2k}\lfloor\frac{k}{2}\rfloor!k} < \frac{6^k}{\pi^{2k}k!}
$$
if and only if $k \leq 23$, we conclude that the upper bounds in (\ref{eq:odd_k}) and (\ref{eq:even_k}) improve estimation (\ref{eq:k_trivial_upper_bound}) for every $3 \leq k \leq 23$.



\bibliographystyle{siamplain}

\newpage
\appendix
\section{Estimations from \cref{th:s_general_formula}}
\medskip
\noindent
\subsection{Estimation of $\sum(m-\m)\sum(n-\n) \cdot |Z_2^b(\m,\n)|$}

Using \cref{lem:Z2b} and formulas (\ref{eq:sum_euler_gen}), (\ref{eq:sum_w}), and (\ref{eq:sum_phi_n_k}), we obtain
\begin{flalign}
\nonumber&\sum_{\m=1}^{m-1}(m-\m)\sum_{\substack{\n=1}}^{n-1}(n-\n) \cdot |Z_2^b(\m,\n)|\\
\nonumber&= \sum_{\m=1}^{m-1}(m-\m)\sum_{\substack{\n=1 \\ \n \perp \m}}^{n-1}(n-\n) \cdot |Z_2^b(\m,\n)|\\
\label{eq:symmetricity_f}
&= \sum_{\m=1}^{m-1}(m-\m)\sum_{\substack{\n=1 \\ \n \perp \m}}^{n-1}(n-\n)\left(\frac{1}{\pi^2}\m^2\n^2 + O(\m^2 \n + \m \n^2)\right)\\
\nonumber&= \frac{1}{\pi^2}\sum_{\m=1}^{m}(m\m^2-\m^3)\sum_{\substack{\n=1 \\ \n \perp \m}}^{n}(n\n^2-\n^3) + O(m^4 n^3+ m^3 n^4)\\
\nonumber&=\frac{1}{\pi^2}\sum_{\m=1}^{m}(m\m^2-\m^3)\left(\frac{\phi(\m)}{3\m}n^4-\frac{\phi(\m)}{4\m}n^4 + O\left(n^3 2^{w(\m)}\right)\right) + O(m^4 n^3+ m^3 n^4)\\
\nonumber&=\frac{1}{12\pi^2}\sum_{\m=1}^{m}\left(m n^4\m\phi(\m)- n^4\m^2\phi(\m) + O\left(m n^3\m^2 2^{w(\m)}\right)\right) + O(m^4 n^3+ m^3 n^4)\\
\nonumber&=\frac{1}{12\pi^2}\left(\frac{2m^4n^4}{\pi^2}- \frac{3m^4n^4}{2\pi^2} + O\left(m^4n^3\log m\right)\right)  + O(m^4 n^3+ m^3 n^4)\\
\nonumber&= \frac{1}{24\pi^4}m^4n^4 + O(m^4n^3\log m + m^3 n^4).
\end{flalign}
Now, symmetry of formula (\ref{eq:symmetricity_f}) implies also the estimation with a symmetric error term
\begin{flalign}
\nonumber\sum_{\m=1}^{m-1}(m-\m)\sum_{\substack{\n=1}}^{n-1}(n-\n) \cdot |Z_2^b(\m,\n)| = 
\frac{1}{24\pi^4}m^4n^4 + O\left(m^3n^4\log n + m^4n^3\right).
\end{flalign}
Finally, comparing the two estimations one can derive
\begin{flalign}
\label{eq:final_1}
\sum_{\m=1}^{m-1}(m-\m)\sum_{\substack{\n=1}}^{n-1}(n-\n) \cdot |Z_2^b(\m,\n)| = \frac{1}{24\pi^4}m^4n^4 + o(m^4n^4).
\end{flalign}

\noindent
\begin{minipage}{\textwidth}
\noindent
\subsection{Estimation of $\sum(m-\m)\sum(n-\n) \Big( |Z_2^c(\m,\n)| + |Z_1^a(\m,\n)| + |Z_0(\m,\n)| \Big)$}

Using Lemmas \ref{lem:Z2c}, \ref{lem:Z1a}, \ref{lem:Z0}, and formula (\ref{eq:sum_powers}), we obtain
\begin{flalign}
\nonumber&\sum_{\m=1}^{m-1}(m-\m)\sum_{\n=1}^{n-1}(n-\n)\Big( |Z_2^c(\m,\n)| + |Z_1^a(\m,\n)| + |Z_0(\m,\n)| \Big)\\
\nonumber&=\sum_{\m=1}^{m-1}(m-\m)\sum_{\n=1}^{n-1}(n-\n)\left(\frac{42}{\pi^4}\m^2\n^2 + \frac{72}{\pi^4}\m^2\n^2 + \frac{72}{\pi^4}\m^2\n^2 + o(\m^2\n^2)\right)\\
\nonumber&=\frac{186}{\pi^4}\sum_{\m=1}^{m}(m-\m)\m^2\sum_{\n=1}^{n}(n-\n)\n^2+ o(m^4n^4)\\
\nonumber&=\frac{186}{\pi^4}\sum_{\m=1}^{m}(m-\m)\m^2\left(\frac{n^4}{3} - \frac{n^4}{4} + O(n^3)\right)+ o(m^4n^4)\\
\label{eq:final_Z2_1_0}
&=\frac{31}{2\pi^4}n^4\sum_{\m=1}^{m}(m-\m)\m^2+ o(m^4n^4)=\frac{31}{24\pi^4}m^4n^4 + o(m^4n^4).
\end{flalign}
\end{minipage}

\medskip
\noindent
\begin{minipage}{\textwidth}
\noindent
\subsection{Estimation of $\sum(m-\m)\sum(n-\n) \cdot |Z_1^b(\m,\n)|$}

Using \cref{lem:Z1b} we derive
\begin{flalign}
\nonumber&\sum_{\m=1}^{m-1}(m-\m)\sum_{\n=1}^{n-1}(n-\n) \cdot |Z_1^b(\m,\n)|\\
\nonumber&=\sum_{\m=1}^{m-1}(m-\m)\sum_{\n=1}^{n-1}(n-\n)\left(\frac{6\n^2}{\pi^2}\sum_{\substack{b_1=1 \\ b_1 \perp \n}}^{\m}\left(2\m - b_1\right) + \frac{6\m^2}{\pi^2}\sum_{\substack{c_2=1 \\ c_2 \perp \m}}^{\n}\left(2\n - c_2\right) + O(\m^2 \n + \m \n^2)\right)\\
\nonumber&=\frac{6}{\pi^2}\sum_{\m=1}^{m}(m-\m)\sum_{\n=1}^{n}(n-\n)\n^2\sum_{\substack{b_1=1 \\ b_1 \perp \n}}^{\m}\left(2\m - b_1\right) \\
\label{eq:two_bit_sum}
&+ \frac{6}{\pi^2}\sum_{\n=1}^{n}(n-\n)\sum_{\m=1}^{m}(m-\m)\m^2\sum_{\substack{c_2=1 \\ c_2 \perp \m}}^{\n}\left(2\n - c_2\right) + o(m^4n^4).
\end{flalign}
\end{minipage}

We notice that the first of the summands in the latter formula is obtained from the second one by swapping 
$\m$ with $\n$, $b_1$ with $c_2$, and $m$ with $n$, hence it suffices to find a closed-form estimation only for one of them,
say for the first one.

Using formula (\ref{eq:sum_euler_gen}) we obtain
\begin{flalign}
\nonumber&\sum_{\n=1}^{n}(n-\n)\n^2\sum_{\substack{b_1=1 \\ b_1 \perp \n}}^{\m}\left(2\m - b_1\right)\\
\nonumber&=\sum_{\n=1}^{n}(n-\n)\n^2\left(2\frac{\phi(\n)}{\n}\m^2 - \frac{\phi(\n)}{2\n}\m^2 + O\left(\m2^{w(\n)}\right)\right)\\
\nonumber&=\frac{3}{2}\sum_{\n=1}^{n}\left(\m^2n\n\phi(\n) -\m^2\n^2\phi(\n) + O\left(\m n\n^22^{w(\n)}\right)\right)\\
\nonumber&=\frac{3}{2}\left(\m^2n\frac{2}{\pi^2}n^3 - \m^2\frac{3}{2\pi^2}n^4\right) + O(\m^2n^3 \log n) + O\left(\m n^4 \log n\right)\\
\label{eq:Z_1_b_error_1}
&=\frac{3}{4\pi^2}\m^2n^4 +n^2\left( O(\m^2n \log n) + O\left(\m n^2 \log n\right)\right).
\end{flalign}

By changing the order of summation in the above sum, we deduce the same result with a different error term:

\begin{flalign}
\nonumber&\sum_{\n=1}^{n}(n-\n)\n^2\sum_{\substack{b_1=1 \\ 
b_1 \perp \n}}^{\m}\left(2\m - b_1\right) =\sum_{b_1=1}^{\m}\left(2\m - b_1\right)\sum_{\substack{\n=1 \\ \n \perp b_1}}^{n}(n-\n)\n^2\\
\nonumber&=\sum_{b_1=1}^{\m}\left(2\m - b_1\right)\left(\frac{\phi(b_1)}{3b_1}n^4 - \frac{\phi(b_1)}{4b_1}n^4 + O\left(n^32^{w(b_1)}\right)\right)\\
\nonumber&=\frac{1}{12}\sum_{b_1=1}^{\m}\left(2\m n^4\frac{\phi(b_1)}{b_1} - n^4\phi(b_1) + O\left( n^3\m2^{w(b_1)}\right)\right)\\
\nonumber&=\frac{1}{12}\left(2\m n^4\frac{6}{\pi^2}\m + O(\m n^4 \log \m) - n^4\frac{3}{\pi^2}\m^2 + O(\m n^4 \log \m) + O(n^3\m^2 \log \m )\right)\\
\label{eq:Z_1_b_error_2}
&=\frac{3}{4\pi^2}\m^2 n^4 + n^2\left(O(\m n^2 \log \m) + O\left(n\m^2 \log \m \right)\right).
\end{flalign}

Comparing the error terms in (\ref{eq:Z_1_b_error_1}) and (\ref{eq:Z_1_b_error_2}) we obtain
\begin{flalign}
\nonumber\sum_{\n=1}^{n}(n-\n)\n^2\sum_{\substack{b_1=1 \\ b_1 \perp \n}}^{\m}\left(2\m - b_1\right) = \frac{3}{4\pi^2}\m^2 n^4 + o(\m^2n^4).
\end{flalign}

Using the obtained formula and formula (\ref{eq:sum_powers}) we proceed
\begin{flalign}
\nonumber\frac{6}{\pi^2}\sum_{\m=1}^{m}(m-\m)\sum_{\n=1}^{n}(n-\n)\n^2\sum_{\substack{b_1=1 \\ b_1 \perp \n}}^{\m}\left(2\m - b_1\right) &=\frac{6}{\pi^2}\sum_{\m=1}^{m}(m-\m)\left(\frac{3}{4\pi^2}\m^2 n^4 + o(\m^2n^4)\right)\\
\nonumber&=\frac{9n^4}{2\pi^4}\sum_{\m=1}^{m}(m \m^2-\m^3)+ o(m^4n^4)\\
\nonumber&=\frac{9n^4}{2\pi^4}\left(\frac{m^4}{3} - \frac{m^4}{4}
\right)+ o(m^4n^4)\\
\nonumber&=\frac{3}{8\pi^4}m^4n^4+ o(m^4n^4).
\end{flalign}

Due to symmetry, the second summand in formula (\ref{eq:two_bit_sum}) has the same asymptotics, and therefore

\begin{flalign*}
\sum_{\m=1}^{m-1}(m-\m)\sum_{\n=1}^{n-1}(n-\n) \cdot |Z_1^b(\m,\n)| = \frac{3}{4\pi^4}m^4n^4+ o(m^4n^4).
\end{flalign*}

\end{document}